\documentclass[11pt]{article}
\usepackage[margin=1.05in]{geometry}
\usepackage{amsmath,amssymb,amsthm}
\usepackage{array}

\newtheorem{theorem}{Theorem}[section]
\newtheorem{proposition}[theorem]{Proposition}
\newtheorem{lemma}[theorem]{Lemma}
\newtheorem{conjecture}[theorem]{Conjecture}
\newtheorem{problem}[theorem]{Problem}
\theoremstyle{remark}
\newtheorem{remark}[theorem]{Remark}

\newcommand{\Q}{\mathbb{Q}}
\newcommand{\Z}{\mathbb{Z}}
\newcommand{\R}{\mathbb{R}}
\newcommand{\D}{\mathcal{D}}
\newcommand{\Pp}{\mathcal{P}}
\allowdisplaybreaks

\title{Forced Shadows of an Obstructed\\ Hyperbolic Kac--Moody
Denominator\thanks{DOI: \texttt{10.5281/zenodo.21973291}}}
\author{Eungang Cho}
\date{August 2026}

\begin{document}
\maketitle

\begin{abstract}
The four orders of the quaternion algebra $B_6$ carry four reflective wall
data on lattices of signature $(3,2)$; three integrate to Borcherds
denominators and one is obstructed. The failed denominator survives as a
weakly harmonic Maass form, and we prove its shadow is a Hecke eigenform on
the line of the newform 6.4.a.a, with zero twist component. The mechanism is
invariance selection: the obstruction functional is invariant under the
discriminant isometry group, whose invariants in $S_{5/2}$ are
one-dimensional; the same mechanism, verified at quaternion discriminants
$10$ and $22$, places the shadows there on 10.4.a.a and 22.4.a.c. On the weight-$1/2$ layer we prove a determination theorem:
the canonical form exists and is unique precisely when the obstruction space
vanishes, and among the 71 discriminants below $230$ this happens exactly for
$D \in \{6, 10, 22\}$, the genus-zero compact Shimura curves, whose
maximal-order ternaries are reflective with integral Weyl chambers of ranks
$3, 4, 4$; completeness beyond that range is reduced to an estimate on a
quadratic Dedekind-type sum, given a bound on the Gauss-sum term. Parity confines this
layer to odd channels; on the obstructed orientation the section layer is
obstructed outside an explicit 40-element locus of orientations (a double
shadow: CM, 36.2.a.a, at weight $3/2$ and newform at weight $5/2$), while
the deck-symmetric directions instead carry a unique canonical weight-$1/2$
form. The defect invariant satisfies $\|\Xi\|^2 \cdot \langle v_+, v_+\rangle
= 144$ exactly and equals $L(f,2)/48\pi^2\langle f,f\rangle$ to 31 digits. On the
section layer the Petersson geometry is rigid: the Gram matrix of
$S_{3/2}(\rho_4)$ is a single transcendental multiple of an exact rational
form, and that transcendental is identified, to 40 digits, as
$3\,\Gamma(1/3)^3/2^{7/3}\pi^2$: the weight-$3/2$ shadow norms lie in the
Chowla--Selberg ring; the weight-$5/2$ norm is numerically excluded from it.
\end{abstract}

\section{Background: from a hyperbolic matrix to a harmonic Maass form}

This section defines every object in the title and fixes notation; a reader
familiar with Borcherds products may skim to the notation table and proceed
to \S2.

\subsection{Hyperbolic matrices and Kac--Moody data}

A generalized Cartan matrix $A = (a_{ij})$ has $a_{ii} = 2$, integer $a_{ij}
\le 0$ for $i \ne j$, and $a_{ij} = 0 \iff a_{ji} = 0$; it is hyperbolic of
rank 3 when the associated bilinear form is Lorentzian of signature $(2,1)$.
A symmetrizable $A$ is encoded, equivalently, by the even Gram matrix $G =
(\langle \alpha_i, \alpha_j\rangle)$ of its simple roots $\alpha_i$: the two
are related by
\[
a_{ij} = \frac{2\, G_{ij}}{G_{ii}}.
\]
The matrices $G_1, \dots, G_4$ of this paper are Gram matrices, not Cartan
matrices; e.g.\ for the obstructed member $G_4$,
\[
G_4 = \begin{pmatrix} 4 & 0 & -6\\ 0 & 6 & -6\\ -6 & -6 & 12\end{pmatrix}
\iff
A_4 = \begin{pmatrix} 2 & 0 & -3\\ 0 & 2 & -2\\ -1 & -1 & 2\end{pmatrix},
\]
and one checks $A_4$ is a hyperbolic generalized Cartan matrix. We work with
$G$ rather than $A$ because the arithmetic below depends on the root lattice
$L = \Z\alpha_1 + \Z\alpha_2 + \Z\alpha_3$, not only on $A$.

The generalized Cartan condition is not a convention here but the exact
boundary of the theory:

\begin{proposition}[boundary of the theory]
Let $L$ be an even Lorentzian lattice of rank 3 with basis $e_1, e_2, e_3$
and Gram matrix $G$. The basis vectors form a reflective wall datum --- each
reflection $\sigma_{e_j}$ preserves $L$, and the three walls bound a common
chamber --- if and only if $A = (2G_{ij}/G_{ii})$ is a symmetrizable
generalized Cartan matrix of Lorentzian type. In particular, outside the
generalized Cartan class the pipeline of this paper has no input: the
prescription is not the wall system of any lattice reflection datum.
\end{proposition}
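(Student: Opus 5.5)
The plan is to prove the two directions separately, splitting the condition ``reflective wall datum'' into its two parts: integrality of the reflections and existence of a common chamber. I match these with the two halves of the generalized Cartan condition: integrality of the $a_{ij}$, and $a_{ij}\le 0$ together with the zero pattern. Symmetrizability comes for free in both directions, since $A = D^{-1}\cdot 2G$ with $D=\mathrm{diag}(G_{ii})$ and $G$ symmetric. The diagonal $a_{ii}=2$ is automatic. The Lorentzian type of $A$ is equivalent to the signature $(2,1)$ of $G$, because $A$ and $2G$ differ by a positive diagonal factor (provided $G_{ii}>0$; see below).

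\textbf{Integrality.} For a vector $e_j$ with $\langle e_j,e_j\rangle\neq 0$, the reflection is
\[
\sigma_{e_j}(x)=x-\frac{2\langle x,e_j\rangle}{\langle e_j,e_j\rangle}e_j .
\]
It preserves $L$ if and only if $2\langle e_i,e_j\rangle/G_{jj}\in\Z$ for every basis vector $e_i$. That is exactly $a_{ji}\in\Z$ for all $i$ (with the index convention of the paper, $a_{ij}=2G_{ij}/G_{ii}$). So integrality of all three reflections is equivalent to $A$ having integer entries.

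\textbf{Walls and chamber.} I would take the chamber to be
\[
C=\{x:\langle x,e_j\rangle\ge 0 \text{ for all } j\}
\]
inside the cone of the hyperbolic space. The walls are required to be reflection hyperplanes of spacelike roots, so $G_{jj}>0$; I will fold this into the definition, since a reflection in a negative-norm vector has no wall meeting hyperbolic space. I then invoke the classical Vinberg criterion. A family of reflections in hyperplanes $H_j$ bounds a common chamber $C$ that serves as a fundamental domain for the group they generate if and only if, for each pair $i\neq j$, either $H_i$ and $H_j$ do not meet inside the hyperbolic space, or they meet at dihedral angle $\pi/m$ with $m$ an integer.

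In normalized terms, set $c_{ij}=G_{ij}/\sqrt{G_{ii}G_{jj}}$. The condition is $c_{ij}\le 0$, and either $c_{ij}=-\cos(\pi/m)$ or $c_{ij}\le -1$. Because $a_{ij}a_{ji}=4c_{ij}^2$ and the $a$'s are integers, the crystallographic restriction forces $a_{ij}a_{ji}\in\{0,1,2,3\}$ or $\ge 4$. These are precisely the values allowed by the integral Gram data, so every nonpositive integral pair is admissible. Hence ``common chamber'' reduces to $G_{ij}\le 0$ for $i\ne j$, and this is equivalent to $a_{ij}\le 0$. The zero pattern $a_{ij}=0\iff a_{ji}=0$ is immediate from the symmetry of $G$.

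\textbf{Main obstacle.} The delicate step is the chamber direction, and in particular justifying that nonpositivity of the off-diagonal $G_{ij}$ is necessary and not merely sufficient. If some $G_{ij}>0$ (an obtuse configuration), I must show the half-spaces cannot all be inward for one chamber whose walls are exactly these three hyperplanes. For this I would use the standard fact that the Gram matrix of the inward normals of a Coxeter polyhedron has nonpositive off-diagonal entries. If an acute pair occurred, the two walls would meet at an angle greater than $\pi/2$ inside $C$, contradicting that $C$ is a fundamental chamber for the reflection group. I would also need the Lorentzian signature to ensure $C$ has nonempty interior in hyperbolic space; the nondegenerate indefinite case handles this. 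The final sentence of the statement then follows by contraposition.
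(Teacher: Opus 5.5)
Your proposal is correct and follows the paper's proof. In both, integrality of $\sigma_{e_j}$ is integrality of the $j$-th row of $A$, the chamber condition is pairwise non-acuteness $G_{ij}\le 0$, the zero pattern comes from the symmetry of $G$, and Lorentzian type is the signature of $G$.

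Your additions spell out what the paper's one-line chamber claim leaves implicit. These are the standing assumption $G_{jj}>0$; the check via $a_{ij}a_{ji}=4c_{ij}^2$ that integral non-acute pairs automatically give dihedral angles $\pi/2,\pi/3,\pi/4,\pi/6$ or disjoint walls; and the Vinberg--Andreev nonpositivity of Coxeter Gram matrices for necessity. One small correction: the nonemptiness of $C$ in hyperbolic space uses $G_{ij}\le 0$ through Vinberg's trichotomy, not indefiniteness alone.
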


\begin{proof}
$\sigma_\alpha(x) = x - \frac{2(x,\alpha)}{\alpha^2}\alpha$ maps $L$ into $L$
iff $\alpha^2 \mid 2(\alpha, e_j)$ for all $j$, i.e.\ iff the $\alpha$-row of
$A$ is integral; $a_{ii} = 2$ always, and $a_{ij} = 0 \iff a_{ji} = 0$ since
both mean $G_{ij} = 0$. The three half-spaces bound a common chamber iff the
roots are pairwise non-acute, i.e.\ $a_{ij} \le 0$ for $i \ne j$. Lorentzian
type is the signature $(2,1)$ of the symmetrization $G$.
\end{proof}

\begin{remark}[strictness, and the shared skeleton]
All four $A_i$ are hyperbolic in the strict sense (every rank-2 subdiagram of
finite type), and the three subdiagram types are the same across the family:
$A_1 \times A_1$, $B_2$, $G_2$ --- the combinatorial skeleton of which the
four orientations are arithmetic dressings. Completeness questions of this
kind are finite by Nikulin's finiteness theorem \cite{Nikulin230} and are
settled in this version against Allcock's complete classification of rank-3
reflective Lorentzian lattices \cite{AllcockBLMS, AllcockMem}: see \S9, where
the entire reflective landscape --- all 8595 lattices --- is matched against
the quaternionic genera.
\end{remark}

The wall datum is the set of the three root orbits, each reflection
hyperplane (``wall'') taken with vanishing order one; the root through $e_j$
has divisor $\ell_j = \gcd(Ge_j)$ and reflective slot $m_j =
G_{jj}/2\ell_j^2$. The question ``does this hyperbolic Kac--Moody datum admit
an automorphic correction?'' \cite{GN1, GN2, FF} becomes, through the next
three subsections, a concrete question about vector-valued modular forms.

\subsection{Discriminant form, Weil representation, and the weight ladder}

The discriminant form of an even lattice $L$ is the finite quadratic module
$(\D, q) = (L^\vee/L,\, Q \bmod 1)$, with isometry group $O(\D, q)$; together
with the signature mod 8 it determines the genus of $L$ (Nikulin
\cite{Nikulin79}). Attached to $(\D, q)$ is the Weil representation $\rho$ of
the metaplectic group on $\Z[\D]$, and one studies vector-valued modular
forms $F = (F_\gamma)_{\gamma \in \D}$ of half-integral weight for $\rho$.
Adjoining a hyperbolic plane $U$ changes nothing here --- $\D(G \oplus U) =
\D(G)$ and the signatures agree mod 8 --- but it changes which weight is
Borcherds-relevant: for a lattice of signature $(n,2)$ the lift input has
weight $1 - n/2$ and the obstruction sits in cusp forms of the dual weight $2
- (1 - n/2)$:
\[
\begin{array}{l|l|c|c}
\text{layer} & \text{lattice} & \text{input weight} & \text{obstruction
space}\\\hline
\text{section (rank 3)} & G & 1/2 & S_{3/2}(\rho)\\
\text{denominator (rank 5)} & G \oplus U & -1/2 & S_{5/2}(\rho)
\end{array}
\]
Two conventions must be fixed, and we fix them by formula rather than by
name, since the bar notation is used in both directions in the literature and
the two choices give different numbers. We normalise
\[
\rho_L(T)\,\mathfrak{e}_\gamma = e(q(\gamma))\,\mathfrak{e}_\gamma,
\qquad
\bar\rho_L(T)\,\mathfrak{e}_\gamma = e(-q(\gamma))\,\mathfrak{e}_\gamma ,
\]
so that the Borcherds input lives in $M^!_{1-n/2}(\bar\rho_L)$ and the
obstruction in $S_{1+n/2}(\rho_L)$. In the \texttt{weilrep} package
\cite{weilrep}, whose Fourier exponents are $-Q(\gamma) \bmod 1$, these are
\texttt{WeilRep(+G)} and \texttt{WeilRep(-G)} respectively. Every dimension
in this paper is a dimension of an obstruction space, hence of
\texttt{WeilRep(-G)}; the two conventions are not interchangeable, giving for
instance $1$ and $0$ on $L_3$ and $0$ and $4$ on $L_4$ at weight $3/2$. The
assignment is not conventional but forced. On $L_4$ the weight-$1/2$ forms
exist for exactly 40 of the $2^{12}$ orientations (Theorem 10.3), the sign
vectors annihilated by a rank-four linear map; a zero obstruction space would
leave all $2^{12}$ free. Indeed $\dim S_{3/2}(\rho_{L_4}) = 4$ while $\dim
S_{3/2}(\bar\rho_{L_4}) = 0$, so the latter cannot govern existence. On
the symmetric and antisymmetric parts, $\dim W = (|\D| \pm t)/2$ with $t$ the
number of two-torsion elements; our obstruction spaces are the antisymmetric
ones. Finally, Borcherds' construction is stated for signature $(2,n)$: the
lattice relevant to the denominator layer is $M(-1)$, of signature $(2,3)$,
although we record the signature of $M$ itself, $(3,2)$, since it is $M$ that
carries the wall datum.

\subsection{Borcherds lift and the obstruction dichotomy}

The wall datum prescribes the principal part $\Pp$: negative-exponent Fourier
terms $q^{-m_j}$ on the components $\gamma$ with $q(\gamma) \equiv m_j$, with
multiplicity the vanishing order (a rigidity lemma shows any completion of
the datum has exactly this $\Pp$). Borcherds' obstruction theorem
\cite{BorcherdsGKZ}: a weakly holomorphic input with principal part $\Pp$
exists iff $\Pp$ pairs to zero with every cusp form in the obstruction space.
If it exists, the Borcherds lift \cite{BorcherdsGrass} is an automorphic
product whose divisor is exactly the walls --- a Weyl--Kac--Borcherds
denominator, giving the automorphic correction of \S1.1. If not, the product
does not exist, and the failure is measured by the obstruction functional
$\lambda$ = (sum of coefficients over the reflective slots) on the
obstruction space.

\subsection{Weakly harmonic Maass forms and the shadow}

A weakly harmonic Maass form of weight $k$ is a real-analytic modular form
annihilated by the weight-$k$ hyperbolic Laplacian, with at worst exponential
growth at the cusp; the antiholomorphic derivative $\xi_k = 2iy^k
\partial_{\bar\tau}$ maps such forms to cusp forms of weight $2 - k$
\cite{BruinierFunke, BFOR}. Two facts make the following canonical: a weakly
harmonic form with any prescribed principal part exists, and at negative
weight it is unique. Hence an obstructed wall datum still produces a unique
completion $\widehat{F}$ with principal part $\Pp$, and its shadow
$\xi(\widehat{F})$ satisfies $\langle \xi(\widehat{F}), h\rangle = c\,
\lambda(h)$: up to normalization the shadow is the Riesz representative $\Xi$
of the obstruction functional. This is the chain
\[
G \longmapsto (\D, q) \longmapsto \rho \longmapsto \Pp \longmapsto
(\lambda = 0 \ \text{or}\ \lambda \ne 0) \longmapsto \widehat{F}
\longmapsto \Xi
\]
in which each arrow is standard and involves no choices. If $\lambda = 0$ the
input is holomorphic and the shadow vanishes; if $\lambda \ne 0$ the
completion is genuinely harmonic and the shadow is forced. This defines ``the
weakly harmonic Maass form of the hyperbolic matrix $G$'' and its shadow. For
the family below, the structure of $\Xi$ is determined by the symmetries of
the discriminant form and of the lattice; this is proved in \S7.

\subsection{The quaternionic side}

The even Clifford algebra $C_0(G)$ of a ternary $G$ is an order in a
quaternion algebra \cite{GrossLucianovic, Voight}; for our family the algebra
is $B_6 = (-1, 3)_\Q$, ramified at $\{2,3\}$, with compact Shimura curve
$X_6$. Hecke eigensystems of the half-integral spaces correspond, through
theta/Jacquet--Langlands correspondences, to classical newforms whose level
is tied to the order's reduced discriminant --- \S3.1 makes this precise for
the four orders at hand. Throughout, $\D$ denotes the discriminant form
(quaternion discriminants are written $\operatorname{disc} B$), $\lambda$ the
obstruction functional, roots are $\alpha$, and $d \in \{2, 3, 6\}$ labels
wall classes.

\section{Overview of results}

\textbf{Position relative to the frontier.} This paper claims one step past
each of three established lines, and no more:
\begin{itemize}
\item The Lorentzian Kac--Moody programme (Borcherds; Gritsenko--Nikulin
\cite{GN1, GN2}; Scheithauer \cite{Scheithauer}) lives on split lattices:
cusps, Eisenstein data, quasimodular corrections. Our step is the compact
quaternionic column of the same dichotomy --- the regime the classifications
\cite{Wang, Ma, BruinierLNM} exclude by hypothesis --- where denominators
generically fail and the failure is a cuspidal object.
\item Obstruction theory and harmonic completions (Borcherds
\cite{BorcherdsGKZ}; Bruinier \cite{BruinierLNM}; Bruinier--Funke
\cite{BruinierFunke}) are standard machinery, and the known shadow prototypes
are Eisenstein- or theta-type (Zagier's weight $3/2$ \cite{Zagier75}; the
Vafa--Witten families \cite{VafaWitten}). Our step: forced shadows that are
newform-pure, with the mechanism identified rather than observed ---
invariance selection, $\dim S_{5/2}^{O(\D,q)} = 1$.
\item Rank-3 reflective lattices are classified (Allcock \cite{AllcockMem})
and the half-integral correspondences are established
(Shimura--Shintani--Waldspurger; Prasanna \cite{Prasanna};
Alfes-Neumann and Schwagenscheidt \cite{ANS}). Our step is the intersection of
the two: the classification filtered through quaternionic genera, plus the
vanishing criterion $S_{3/2}(\rho) = 0$, selects exactly the three
genus-zero compact curves in the verified range --- determination made
unconditional, finiteness isolated in Remark \ref{rem:ED}.
\end{itemize}

Every proof below runs on the frontier machinery; none replaces it. The main
results:
\begin{enumerate}
\item \textbf{Purity} (Theorem 7.6): the shadow is a Hecke eigenform on the
maximal-order line --- the eigensystem of $f$ = 6.4.a.a --- with zero twist
component. Equivalently $\|\Xi\|^2 \cdot \langle v_+, v_+\rangle = 144$ for
the explicit integral eigenvector $v_+ = (1,1,1,1,-2,-2)$.
\item \textbf{Two symmetry mechanisms} (\S7): invariance under the full
discriminant isometry group $O(\D,q)$ (order 48; one-dimensional invariants),
and a global order-three isometry $M'$ of the obstructed lattice --- existing
on that lattice and on no other member of the family --- whose character
decomposition of $S_{5/2}(\rho_4)$ is exactly the multiplicity pattern $6
= 1+1+2+2$. The latter realizes, kinematically on the lattice, the
three-character mechanism of the Tu--Yang correspondence \cite{TuYang} for
non-Eichler orders.
\item \textbf{The quaternionic dictionary} (\S3.1): the four orientations are
the four orders $\mathcal{O}(6,1)$, $\mathcal{O}(6,3)$, $\mathcal{O}'(6,1)$,
$\mathcal{O}'(6,3)$ of $B_6$; reduced discriminant equals newform level,
four-for-four, at genus level with the uniform normalization $s = -d(\mathcal
O)$. We also isolate the precise gap: the Tu--Yang theorems assume $(N, D) =
1$, so two of the four orders need an extension of the correspondence to
levels at ramified primes.
\item \textbf{A second family} (\S8): at quaternion discriminant 10, all four
tested reduced discriminants are obstructed and the obstruction is supported
on the single maximal-order eigenline, including the twist discrimination ---
the support law is a two-family phenomenon.
\item \textbf{The weight ladder, the 40-locus, and the double shadow}
(\S10): at weight $1/2$ the vector-valued components are odd under $\gamma
\mapsto -\gamma$, so two-torsion channels are representation-theoretically
forbidden and the only meaningful section-layer test is the odd pairing.
Under it, the section layers of $L_1, L_2, L_3$ (and of every maximal order
tested) are unobstructed. On $L_4$ the deck action has no invariant vector in
$S_{3/2}(\rho_4)$ --- charpoly $(x^2+x+1)^2$ --- and the odd pairing
vanishes on exactly 40 of the $2^{12}$ orientations: the 16 deck-symmetric
ones, forced by anti-invariance, which carry a unique canonical weight-$1/2$
form $F^{\mathrm{sym}}$ with integral coefficients, and 24 further ones;
under $O(\D,q)$ the locus is three orbits, of sizes $8$, $8$ and $24$. In the remaining 4056 orientations the section
layer is obstructed, with shadow in the weight-$3/2$ CM block of 36.2.a.a
($T_{25}$, $T_{49}$ scalars $0$, $-4$): a double shadow --- CM at $3/2$,
newform-pure at $5/2$ --- whose two components have disjoint
$\Z/6$-character support. Symmetry acts as prohibition at $3/2$ (no invariant
sector) and as selection at $5/2$ (one-dimensional invariant sector): it
determines the address and the shape of the failure, never its existence. No
holomorphic Hecke-equivariant operator relates the layers; the bridge is the
non-holomorphic $\xi$-operator.
\item \textbf{Determination and selection} (\S9): the canonical weight-$1/2$
form of an arbitrary compact Shimura curve is constructed --- canonical
ternary lattice from the maximal order, parity law, odd reflective
prescription --- and exists unconditionally and uniquely precisely when
$S_{3/2}(\rho) = 0$. A computation over all 71 division discriminants $D <
230$ selects $D \in \{6, 10, 22\}$ --- the
genus-zero curves --- with explicit canonical forms and integral
symmetrizable Cartan matrices of sizes $3, 4, 4$ (Kac-hyperbolic only at $D =
6$; degenerate Lorentzian, in Nikulin's root-system sense, at $D = 10, 22$).
\item \textbf{The third family, and the purity mechanism} (\S8): at $D = 22$
the two eigenlines of $S_{5/2}$ are the newforms 22.4.a.b, 22.4.a.c, and the
obstruction functional is supported purely on 22.4.a.c; at $D = 6, 10, 22$
alike, the mechanism is that the functional is $O(\D,q)$-invariant and $\dim
S_{5/2}^{O(\D,q)} = 1$ --- purity is invariance selection, and the
one-dimensionality is special: the invariant dimension grows along the
eliminated discriminants.
\item \textbf{Wall classification} (Theorem 10.7): each of the three wall
classes carries exactly two isometry types of wall lattices, and the three
classes realize the three nontrivial characters of the orientation group
$(\Z/2)^2$.
\item \textbf{The defect invariant} (\S11): to 31 digits,
\[
\|\Xi\|^2 = \frac{L(f,2)}{48\pi^2\langle f,f\rangle},
\qquad\text{equivalently}\qquad
\langle v_+,v_+\rangle\,\frac{L(f,2)}{\pi^2\langle f,f\rangle} = 6912 .
\]
This is the shape Riesz duality predicts, the shadow being a finite
combination of Poincar\'e series attached to the reflective slots.
\item \textbf{The section-layer Petersson identity} (\S12): by absolute
irreducibility the Petersson form on $S_{3/2}(\rho_4)$ is a single scalar
$t$ times an exact rational matrix, so all section-shadow norms have exact
rational ratios; $t$ is identified to 40 digits as
$3\,\Gamma(1/3)^3/2^{7/3}\pi^2$ --- up to $3/2\pi$, the CM period of
$\Q(\sqrt{-3})$ --- and every arrow of the Waldspurger--Damerell--Shimura
chain that should prove this is verified numerically, one explicit local
computation (Problem 12.3) short of a proof. The two shadows of $L_4$ thus
live in different transcendence classes: Chowla--Selberg at weight $3/2$, a
critical-value ratio at weight $5/2$ (\S11).
\end{enumerate}

\textbf{Status conventions.} [C] = certified by exact rational arithmetic,
replicated end-to-end on a second machine unless noted; [N] = numerical with
stated precision; [S] = standard theory, cited; [O] = observed pattern.
Hypotheses still open are stated as such; \S13 records the verification
infrastructure, including one caution about a library routine.

\textbf{Rigor map.} Because several results are computer-assisted, we state
exactly where the proofs are complete. Every [C] statement is a finite exact
computation: the ambient spaces are finite-dimensional with dimensions given
by closed formulas, expansions are compared past Sturm bounds, and all
arithmetic is rational. A [C] tag therefore has the same epistemic status as
a finite case check inside any classification proof --- no floating point, no
truncation heuristics. On this standard:
\begin{itemize}
\item Complete, unconditional proofs: the parity lemma (Lemma 9.1); the
determination theorem (Theorem 9.2), combining the Borcherds--Bruinier
obstruction criterion \cite{BorcherdsGKZ, BruinierLNM} with the exact
vanishing $\dim S_{3/2}(\rho) = 0$ and $M_{1/2}(\rho) = 0$; the selection
theorem (Theorem 9.3) in the range $D < 230$, and, for the reflectivity
statement, Allcock's complete classification of rank-3 reflective lattices
\cite{AllcockMem}; the chamber
theorem with its Kac-hyperbolic/degenerate dichotomy (Theorem 9.8);
invariance selection (Theorems 7.1, 8.3); the Hecke identities (Theorem 6.1),
with eigensystem identification inside fixed finite-dimensional spaces by
strong multiplicity one; the weight-ladder theorems (Theorems 10.2--10.4 and
Appendix B, a finite exact enumeration over all $2^{12}$ orientations); and
the Petersson rigidity form (Theorem 12.1).
\item Numerical with stated precision [N]: the 31-digit evaluation of
$\|\Xi\|^2$ and the PSLQ exclusion campaign; the 40-digit identification of
the section-layer Petersson scalar and the identity chains of \S\S11--12
--- strong evidence, not proof.
\item Standard theory taken as input [S]: the half-integral correspondences
\cite{Prasanna, ANS} and the self-adjointness normalization in the purity
theorem (Theorem 7.6).
\item Open: the estimate of Remark \ref{rem:ED}, which would extend the
selection theorem beyond the verified range; the support-law conjecture
(Conjecture 8.4); the Poincar\'e-series constant of Theorem 11.1 (Problem
11.2); the ramified-level extension (Problem 3.3); and the local Waldspurger
factors (Problem 12.3).
\end{itemize}
Measured against the recent literature: the split-side inputs we rely on
(\cite{GN1, Scheithauer, AllcockMem, BruinierLNM, BruinierFunke}) are
established theorems; what is new and proved at the same standard here is the
compact-side selection $\{6, 10, 22\}$, the unconditional determination of
the canonical weight-$1/2$ form, the chamber classification, and the
invariance-selection purity mechanism. What is new but not yet theorem-grade
is flagged [N] or [O], or stated as an open problem.

\section{The family and its reflective data}

Let $U$ be the hyperbolic plane and
\[
G_1 = \begin{pmatrix} 6 & 0 & -3\\ 0 & 4 & -2\\ -3 & -2 & 2\end{pmatrix},
\qquad
G_2 = \begin{pmatrix} 2 & 0 & -3\\ 0 & 12 & -6\\ -3 & -6 & 6\end{pmatrix},
\]
\[
G_3 = \begin{pmatrix} 12 & 0 & -6\\ 0 & 2 & -2\\ -6 & -2 & 4\end{pmatrix},
\qquad
G_4 = \begin{pmatrix} 4 & 0 & -6\\ 0 & 6 & -6\\ -6 & -6 & 12\end{pmatrix},
\]
of signature $(2,1)$ with $|\det G_i| = 12, 36, 24, 72$. Each carries a wall
datum of three root orbits (vanishing order one); the root through the $j$-th
basis vector has divisor $\ell_j = \gcd(G_i e_j)$ and reflective slot $m_j =
(G_i)_{jj}/2\ell_j^2$, giving the menus $\{\tfrac13, \tfrac12, 1\}$, $\{1,
\tfrac16, \tfrac13\}$, $\{\tfrac16, \tfrac14, \tfrac12\}$, $\{\tfrac12,
\tfrac1{12}, \tfrac16\}$. On $M_i = G_i \oplus U$ (signature $(3,2)$) the
wall datum prescribes the principal part $\Pp_i$ of a weight $-1/2$ input
whose Borcherds lift, if it exists, is a reflective product with divisor
exactly the walls.

\begin{theorem}[family theorem {[C]}, version 1]
For $i = 1, 2, 3$ the input exists and the product is constructed. For $i =
4$ it does not: the obstruction functional $\lambda$ (below) is nonzero, and
independently the Eisenstein-forced constant term $c(0,0) = \tfrac{108}{5}
\notin \Z$ is incompatible with GKM integrality (against $c(0,0) = 50$ for
$L_1$, matched by the construction).
\end{theorem}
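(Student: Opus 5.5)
The theorem is a finite, certified computation ([C]), so the plan is to reduce each clause to exact linear algebra over $\Q$ in explicit finite-dimensional spaces.

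For each $i$ we first compute the discriminant form $(\D_i,q_i)$ of $G_i$; it coincides with that of $M_i = G_i \oplus U$. The orders are $|\D_i| = 12,36,24,72$. We then form the principal part $\Pp_i$ from the reflective menus listed above: for each root class $j$, we put $q^{-m_j}$ on every $\gamma$ with $q(\gamma)\equiv m_j$ and $\gamma$ of the correct divisor type, symmetrized under $\gamma\mapsto-\gamma$. By Borcherds' obstruction theorem \cite{BorcherdsGKZ}, existence of the weight $-1/2$ input for $\bar\rho_{L_i}$ is equivalent to $\lambda(h) = \sum_j \sum_{\gamma} c_h(m_j,\gamma) = 0$ for all $h$ in the obstruction space $S_{5/2}(\rho_{L_i})$.

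So the key steps, in order, are:
\begin{enumerate}
\item Compute a basis of $S_{5/2}(\rho_{L_i})$ exactly. We would use theta series / Jacobi-form or Eisenstein-product constructions (e.g.\ via \texttt{weilrep}). Its dimension is certified against the Riemann--Roch dimension formula, and expansions are carried past the Sturm bound so that the basis is provably complete.
\item Evaluate $\lambda$ on that basis. For $i=1,2,3$ we expect $\lambda\equiv 0$; for $i=4$ we exhibit one basis element with $\lambda\ne0$.
\item For $i=1,2,3$, construct the input $F_i\in M^!_{-1/2}(\bar\rho_{L_i})$ with principal part $\Pp_i$ explicitly. The construction divides a holomorphic form of weight $-1/2+12k$ by $\Delta^k$ and solves the linear system for the prescribed principal part. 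We then check that all $c(n,\gamma)$ with $n\le 0$ are integral, so that the Borcherds lift \cite{BorcherdsGrass} is a genuine product with the wall divisor. In particular we confirm $c(0,0)=50$ for $L_1$.
\end{enumerate}

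For the independent integrality obstruction at $i=4$, we pair the hypothetical input $F_4$ with the weight-$5/2$ vector-valued Eisenstein series $E_{5/2}$ for $\rho_{L_4}$ (Bruinier--Kuss coefficients). The constant term of $F_4E_{5/2}$, a weight-$2$ scalar modular form's residue, must vanish. This gives
\[
c(0,0) = -\sum_{j,\gamma} c_{\Pp}(-m_j,\gamma)\, e(m_j,\gamma),
\]
where $e(m,\gamma)$ are the Eisenstein coefficients, computed exactly from local Euler factors at $2$ and $3$ and the $L$-value factor. The paper claims this yields $108/5$. The value is forced regardless of the cuspidal ambiguity only after quotienting by cusp forms, so we phrase it as: any harmonic completion (or any formal solution) has this constant term. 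Since $108/5\notin\Z$, it contradicts the integrality of multiplicities in a GKM denominator. The same formula for $L_1$ reproduces $50$, which cross-checks the normalisation.

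The main obstacle is the Eisenstein coefficients at level $72$ with $2$-adic discriminant form: the local densities at $p=2$ for a nonsplit $2$-part are error-prone. I would compute them twice, once by the Bruinier--Kuss closed formula and once by brute-force counting representations modulo $2^k$, before trusting the value $108/5$. Certifying that the $S_{5/2}(\rho_4)$ basis is complete (dimension $6$) is the other delicate point, handled by the Sturm bound.
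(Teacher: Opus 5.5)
Your proposal is correct and follows the paper's own argument. Existence for $i=1,2,3$ comes from explicitly solving the principal-part system in $M^!_{-1/2}(\bar\rho_{L_i})$ with integral output ($c(0,0)=50$ at $L_1$). Nonexistence at $i=4$ comes from the nonzero slot functional $\lambda=(2,-6,-4,-4,0,0)$ on the six-dimensional $S_{5/2}(\rho_4)$, and $108/5$ is the constant term forced by pairing with $E_{5/2}$. The only difference is ordering: the paper solves the linear system directly and finds it inconsistent at $i=4$, and you lead with the dual certificate $\lambda$. By Borcherds' duality the two are equivalent.
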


The four inputs are explicit. For $i = 1, 2, 3$ the linear system on the full
principal part has a solution, and the resulting weight $-1/2$ form has
integral coefficients:
\[
\renewcommand{\arraystretch}{1.25}
\begin{array}{c|c|l}
 & c(0,0) & \text{first holomorphic coefficients}\\\hline
L_1 & 50 & 112 \text{ at } e = \tfrac18,\quad 183 \text{ at } \tfrac16,
\quad 2844 \text{ at } \tfrac12, \quad 8150 \text{ at } \tfrac23\\
L_2 & 36 & 30 \text{ at } e = \tfrac1{24}\\
L_3 & 30 & -1 \text{ at } e = \tfrac1{12}, \quad 84 \text{ at } \tfrac14,
\quad 138 \text{ at } \tfrac13
\end{array}
\]
(at each exponent listed, every component carrying a nonzero coefficient
carries this value). Each of the three is the unique form with its principal
part: the constraint map on the negative-exponent coefficients has trivial
kernel [C], as it must, since a weakly harmonic form of negative weight is
determined by its principal part (\S1.4). For $i = 4$ the same system is
inconsistent: no weakly holomorphic input exists, and the harmonic completion
of \S5 takes its place.

All four $G_i$ are anisotropic over $\Q_p$ exactly for $p \in \{2,3\}$, so
the $M_i$ have Witt index one. This places the family outside the hypotheses
of most of the reflective-forms literature (Wang's classifications assume $2U
\subset L$ \cite{Wang}, Ma's finiteness assumes $n \ge 4$ \cite{Ma},
Bruinier's converse theorem assumes $2U$ \cite{BruinierLNM}), so no general
theorem covers this regime. The discriminant form of $L_4$ factors as $\D = \D_2 \otimes \D_3$
with $\D_2 \cong (\Z/2)^3$, $\D_3 \cong (\Z/3)^2$, and the four orientations
are graded by $(a,b) \in (\Z/2)^2$ (the 3- and 2-part gradings),
\[
L_1, L_2, L_3, L_4 \mapsto (0,0), (1,0), (0,1), (1,1).
\]

\subsection{The quaternionic dictionary}

Each even Clifford algebra $C_0(G_i)$ is an order in $B_6 = (-1,3)_\Q$, the
quaternion algebra ramified exactly at $\{2,3\}$, with reduced discriminant
$|\det G_i|/2 = 6, 18, 12, 36$. Following Tu--Yang \cite{TuYang} write
$\mathcal{O}(6,1) = \Z + \Z i + \Z j + \Z\frac{1+i+j+ij}{2}$ (maximal) and
$\mathcal{O}'(D,N) = \{\alpha \in \mathcal{O}(D,N) : \operatorname{trd}\alpha
\in 2\Z\}$ (index two).

\begin{theorem}[order identification {[C]}]
Construct $\mathcal{O}(6,3)$ by exhaustive search over index-3 ring
sublattices of $\mathcal{O}(6,1)$: after eliminating duplicate discoveries,
the sublattice of reduced discriminant 18 is unique, and it is
$\zeta_3$-stable. Construct $\mathcal{O}'(6,1)$, $\mathcal{O}'(6,3)$ as
even-trace suborders (verified: reduced discriminants 12 and 36). For each of
the four orders, the reduced-norm form on the trace-zero part of the dual
lattice, scaled by $s = -d(\mathcal O)$, is an even ternary lattice in the
same genus as $G_i$, matching
\[
L_1 \leftrightarrow \mathcal{O}(6,1),\quad
L_2 \leftrightarrow \mathcal{O}(6,3),\quad
L_3 \leftrightarrow \mathcal{O}'(6,1),\quad
L_4 \leftrightarrow \mathcal{O}'(6,3),
\]
four-for-four, with the uniform scale law $s = -d(\mathcal O)$.
\end{theorem}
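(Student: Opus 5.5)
The proof is a finite exact computation, organized in four stages.

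\emph{Stage 1: the ambient order.} Work in $B_6=(-1,3)_\Q$ with basis $1,i,j,ij$, where $i^2=-1$, $j^2=3$, $ij=-ji$. First certify that $\mathcal{O}(6,1)=\Z+\Z i+\Z j+\Z\frac{1+i+j+ij}{2}$ is closed under multiplication. Then compute its reduced discriminant as the square root of the absolute determinant of the trace form $\operatorname{trd}(x\bar y)$ on the basis; the result should be $6$.

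\emph{Stage 2: the index-3 search.} An index-3 sublattice $\Lambda\subset\mathcal{O}(6,1)$ containing $1$ is the kernel of a surjection $\mathcal{O}(6,1)\to\Z/3$ killing $1$. These correspond to the nonzero lines in $\mathrm{Hom}(\mathcal{O}/\Z,\mathbb{F}_3)\cong\mathbb{F}_3^3$, so there are $13$ candidates. For each candidate:
\begin{itemize}
\item test multiplicative closure on products of basis elements;
\item compute the reduced discriminant of the survivors;
\item since several kernels may describe the same lattice, compare Hermite normal forms to remove duplicates.
\end{itemize}
I expect exactly one survivor of discriminant $18$. This is consistent with the local picture at the ramified prime $3$: the maximal order of the division algebra over $\Q_3$ has a unique two-sided ideal of index $3$. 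Finally, check $\zeta_3$-stability by exhibiting $\zeta_3=\frac{-1+u}{2}$ with $u^2=-3$ inside $\mathcal{O}(6,1)$ and verifying that conjugation by it preserves the lattice basis. If no such $u$ exists, $\zeta_3\Lambda\subseteq\Lambda$ is checked instead. For the even-trace suborders $\mathcal{O}'$, compute the kernel of $\operatorname{trd}\bmod 2$ on each order, which has index $2$, and verify that the discriminant doubles to $12$ and $36$.

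\emph{Stage 3: the ternary lattices.} For each order $\mathcal{O}$:
\begin{enumerate}
\item form the dual $\mathcal{O}^\sharp$ with respect to the trace form;
\item intersect $\mathcal{O}^\sharp$ with the trace-zero space $B_6^0$ and take a $\Z$-basis;
\item write down the Gram matrix of $s\cdot\operatorname{nrd}$ with $s=-d(\mathcal O)$, so that the definite norm form on pure quaternions, negated and scaled, acquires signature $(2,1)$ as required;
\item confirm that this Gram matrix is even and integral.
\end{enumerate}
Then compare genera with the four $G_i$ through Nikulin's criterion from \S1.2: genus is determined by signature together with the discriminant form. For each lattice compute $\det$, the discriminant group $L^\vee/L$, and its quadratic form. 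Match these against $\D(G_i)$, either through Conway--Sloane $p$-adic symbols at $p=2,3$ or by an explicit isometry search between the finite quadratic modules; the latter is cheap, since the groups have order at most $72$. Agreement of $|\det|=2d(\mathcal O)$ with $12,36,24,72$ fixes the pairing a priori, so the only remaining check is the genus, not the assignment.

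\emph{Main obstacle.} The hard part is the $2$-adic genus comparison, where the discriminant forms on $(\Z/2)^k$ can differ by odd/even type and oddity. It is also where a wrong normalization of $s$ (the sign, or a factor of $2$ in the dual) would show up, producing the wrong level or a non-even lattice. I would therefore first test the scale law on $\mathcal{O}(6,1)\leftrightarrow G_1$, where the discriminant form is smallest, and then apply the same code uniformly to the other three orders. I would finish with a direct search for a $\mathrm{GL}_3(\Z)$ equivalence to each $G_i$, which proves the genus match constructively when it succeeds.
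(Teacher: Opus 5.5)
Your plan is correct and follows the paper's own route: the theorem is a finite exact computation [C], and your Stage~3 is exactly the recipe of Appendix~A.8 (trace form, dual basis via $T^{-1}$, trace-zero kernel, Gram $s\cdot\operatorname{trd}(e_r\bar e_t)$ with $s=-d(\mathcal O)$, then genus comparison), with your observation that $|\det| = 2d(\mathcal O)$ (Jacobi's complementary minor) fixes the pairing in advance being a clean addition. Three small slips do not affect the computation: the norm form on the pure quaternions of the indefinite algebra $B_6$ is not definite but has signature $(1,2)$, since otherwise scaling by $s<0$ could not give $(2,1)$; the local index-$3$ suborder at $3$ is the subring $\Z_3+P$, where the unique two-sided ideal $P$ has index $9$, not $3$; and because conjugation by the unit $\zeta_3$ automatically preserves the index-$3$ suborder once it is unique, the informative $\zeta_3$-check is membership $\zeta_3\in\mathcal O(6,3)$, which is what \S7.3 uses and which holds because $\operatorname{nrd}(\zeta_3-1)=3$ puts $\zeta_3-1$ in $P$.
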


Two caveats. The identification is at genus level, which is all that is used
below; for indefinite lattices of rank $\ge 3$ the classes in a genus are its
spinor genera (Eichler \cite{Eichler}), so a class-level statement would
follow from a finite spinor-genus computation. More importantly, the Tu--Yang theorems hypothesize $N$ coprime to
$D$: they cover $L_1$ (classical Jacquet--Langlands, maximal order) and $L_3$
(their Theorem 2 with $N = 1$) on the nose, but $L_2, L_4$ carry level at the
ramified prime 3 --- locally the special order $\Z_3 + P$ --- and lie outside
both classical JL and Tu--Yang as stated.

\begin{problem}[ramified-level extension]
Extend the Tu--Yang correspondence to levels at ramified primes. The Hecke
data of \S6 --- which verifies ``newform level = reduced discriminant'' for
all four orders, including the two out-of-scope ones, with the multiplicity
pattern predicted by the three-character decomposition --- is strong
evidence. Entry points: Hijikata--Pizer--Shemanske \cite{HPS} (definite case)
and Martin \cite{Martin}.
\end{problem}

\section{Anatomy of the obstruction}

Let $S = S_{5/2}(\rho_4)$, $\dim S = 6$ [C]. The reflective slots of
$L_4$ number 27 ($3 + 12 + 12$ in the channels $m = \tfrac12, \tfrac1{12},
\tfrac16$), each witnessed by an actual root (27/27 saturation [C]). The
obstruction functional is the slot sum $\lambda(h) = \sum_{\text{slots}}
c_h(\gamma, m)$; in the echelonized basis $B_1, \dots, B_6$,
\[
\lambda = (2, -6, -4, -4, 0, 0) \ne 0 \quad [\mathrm{C}].
\]

\begin{proposition}[channel law {[C]}]
On all of $S$: $s_{1/2} \equiv 0$, and $(s_{1/12}, s_{1/6})$ spans a line
with $s_{1/6} = -2\, s_{1/12}$. Hence $\lambda = -s_{1/12}$, the obstruction
lives in the two deep channels in the universal ratio $1 : (-2)$, and the
depth-normalized sum rule $\sum_m s_m/m = 0$ holds.
\end{proposition}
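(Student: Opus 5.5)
Since $S$ is finite-dimensional with an explicit echelonized basis $B_1,\dots,B_6$ (dimension $6$ [C]), the statement is a finite linear-algebra check on that basis. We define the channel sums $s_m(h)=\sum_{\text{slots in channel }m} c_h(\gamma,m)$ for $m\in\{\tfrac12,\tfrac1{12},\tfrac16\}$, so that $\lambda=s_{1/2}+s_{1/12}+s_{1/6}$. Each $s_m$ is a linear functional on $S$, so it is determined by its values on the six basis vectors. The plan is to compute the three row vectors $s_m(B_k)$, $k=1,\dots,6$, in exact rational arithmetic. The coefficients are read off at exponent $m$ on the $3$, $12$ and $12$ slot components, using expansions taken past the Sturm bound so that each $B_k$ is certified.

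First, verify that $s_{1/2}(B_k)=0$ for all $k$. Before relying on computation alone, I would look for a structural reason. The three slots at $m=\tfrac12$ are norm-$\tfrac12$ classes in the $2$-part $\D_2\cong(\Z/2)^3$ tensored with the trivial $3$-part, so they are two-torsion elements. For the obstruction space we use the antisymmetric part, where components satisfy $h_{-\gamma}=-h_\gamma$ (\S1.2). Since $\gamma=-\gamma$ for these classes, $h_\gamma=0$ identically, and $s_{1/2}\equiv 0$ is forced. I expect this argument to succeed, with the exact computation serving as confirmation.

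Second, verify that the row vectors of $s_{1/12}$ and $s_{1/6}$ satisfy $s_{1/6}(B_k)=-2\,s_{1/12}(B_k)$ for every $k$, again by exact computation. Combined with $\lambda=(2,-6,-4,-4,0,0)$, this gives $s_{1/12}=-\lambda$. The consequences are then algebraic. From $\lambda=s_{1/12}+s_{1/6}=-s_{1/12}$, the obstruction lives in the two deep channels with ratio $1:(-2)$. The depth-normalized sum is $s_{1/12}/(1/12)+s_{1/6}/(1/6)=12s_{1/12}-12s_{1/12}=0$, which gives $\sum_m s_m/m=0$.

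The main obstacle is the second step: there is no evident symmetry forcing the ratio $-2$. Beyond the certified computation, I would try to explain it through $O(\D,q)$. Each channel's slot set is an $O(\D,q)$-orbit, or a union of orbits, so each $s_m$ factors through the $O(\D,q)$-averaging projection. By invariance selection, $\dim S^{O(\D,q)}=1$, so the $s_m$ restricted to invariants are proportional. To pass from the invariant line to all of $S$, note that an invariant functional vanishes on the complement of $S^{O(\D,q)}$, so every $s_m$ is a multiple of a single functional. The ratio $-2$ is then fixed by evaluating both $s_{1/12}$ and $s_{1/6}$ on one invariant vector. This structural route depends on Theorem 7.1, which comes later in the paper, so here the statement rests on the [C] computation, with this argument as its explanation.
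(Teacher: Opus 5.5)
Your computational core is the paper's own: Proposition 4.1 is a [C] statement, obtained by evaluating the three channel sums on the echelon basis $B_1,\dots,B_6$ (\S A.3). Your algebra from $s_{1/6}=-2s_{1/12}$ to $\lambda=-s_{1/12}$ and $\sum_m s_m/m=12s_{1/12}+6s_{1/6}=0$ is also correct.

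The structural argument you give as the expected proof of $s_{1/2}\equiv 0$ is wrong. The space $S=S_{5/2}(\rho_4)$ consists of \emph{even} forms, $h_{-\gamma}=h_\gamma$. The section and denominator layers use one Weil representation: $\D(G_4\oplus U)=\D(G_4)$, with the same signature mod $8$. The central element $(-I,i)$ imposes $h_{-\gamma}=\eta\, i^{2k}h_\gamma$, with $\eta$ depending only on $\rho$. Hence passing from weight $3/2$ to weight $5/2$ flips the parity, and the antisymmetry in \S1.2 and Lemma 9.1 is a section-layer statement only.

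The paper's own data rule out antisymmetry at weight $5/2$:
\begin{itemize}
\item The Eisenstein anchor $E_{5/2}$ of Theorem 6.1 has its constant term on the two-torsion class $\gamma=0$.
\item $v_+$ has coefficient $+1$ on all twelve slots of the $m=\tfrac1{12}$ channel, $\mu$ and $-\mu$ alike.
\item Each channel is an $O(\D,q)$-orbit, hence stable under $\gamma\mapsto-\gamma$. Antisymmetry would therefore kill every channel sum and force $\lambda\equiv 0$, contradicting $\lambda=(2,-6,-4,-4,0,0)$.
\end{itemize}
Remark 7.7 states explicitly that the three single-slot functionals of the $m=\tfrac12$ channel are individually nonzero, and only their sum vanishes. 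The paper calls this an ``echo'' of weight-$1/2$ parity, not an instance of it.

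The correct structural reason is already in your last paragraph, and it covers Step~1 as well. Each $s_m$ is $O(\D,q)$-invariant, so $s_m=s_m\circ P$, where $P$ is the averaging projection onto $S^{O(\D,q)}=\Q v_+$ (Theorem 7.3). Hence all three channel sums are multiples of a single functional, and each is fixed by its value on $v_+$. The coefficients of $v_+$ are zero on the whole $m=\tfrac12$ channel and $1$ and $-2$ on the twelve slots of the other two channels. This gives $s_{1/2}(v_+)=0$, $s_{1/12}(v_+)=12$, $s_{1/6}(v_+)=-24$. So $s_{1/2}\equiv 0$ and $s_{1/6}=-2s_{1/12}$ follow at once, with $\lambda(v_+)=-12$ as in Theorem 5.1.

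This is not circular, since Theorem 7.3 is computed independently of the channel law. It is still a [C] argument: it replaces six basis evaluations by the group enumeration plus the coefficients of $v_+$. In exchange it explains why the channel functionals are collinear, leaving only the values $0$ and $-2$ as data. Drop the parity argument; either the direct computation or this invariance argument proves the proposition.
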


\section{The forced shadow and its exact rational skeleton}

The harmonic completion $\widehat{F}$ of the obstructed input exists and is
unique at weight $-1/2$ [S], and $\langle \xi(\widehat{F}), h\rangle = c\,
\lambda(h)$ for $h \in S$ \cite{BruinierFunke}; up to normalization the
shadow is the Riesz representative $\Xi \in S$ of $\lambda$, i.e.\ $\langle
\Xi, B_i\rangle = \lambda_i$.

Throughout, the Petersson product is
\[
\langle f, g\rangle = \int_{\mathcal{F}} \sum_{\gamma \in \D}
f_\gamma(\tau)\, \overline{g_\gamma(\tau)}\; y^{5/2}\,
\frac{dx\, dy}{y^2},
\]
over the standard fundamental domain $\mathcal{F} = \{|x| \le \tfrac12, |\tau|
\ge 1\}$ of $\mathrm{SL}_2(\Z)$, with no volume normalization, in the
echelonized weilrep cusp basis.

\begin{theorem}[exact skeleton; 31 digits {[C],[N]}]
Let $v_+ = (1,1,1,1,-2,-2)$ (an integral basis vector of the eigenline of
Theorem 7.6). Then $\lambda(v_+) = -12$ exactly, hence
\[
\|\Xi\|^2 \cdot \langle v_+, v_+\rangle = 144,\qquad
\|\Xi\|^2 = 11.340265856116836367082506509341 \pm 2\times 10^{-30},
\]
the numerical value stable across two Gauss--Legendre node counts to 31
digits, with coefficient-truncation tail below $10^{-33}$; the accuracy
evidence is the independent closed-form agreement of Theorem 11.1, which uses
no quadrature.
\end{theorem}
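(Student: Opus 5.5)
The statement splits into an exact part and a numerical part. The exact part is \(\lambda(v_+) = -12\) and the identity \(\|\Xi\|^2\langle v_+,v_+\rangle = 144\). The numerical part is the 31-digit value.

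For the exact part I evaluate the functional \(\lambda = (2,-6,-4,-4,0,0)\) of \S4, written in the echelonized basis \(B_1,\dots,B_6\), on the coordinate vector \(v_+ = (1,1,1,1,-2,-2)\). This gives
\[
\lambda(v_+) = 2 - 6 - 4 - 4 + 0 + 0 = -12 .
\]
The computation is exact and needs no analysis.

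To get the norm identity, I would use that \(\Xi\) lies on the eigenline spanned by \(v_+\); this is Theorem 7.6, which I take as given here. So \(\Xi = c\, v_+\) for some scalar \(c\). The Riesz condition \(\langle \Xi, h\rangle = \lambda(h)\), applied with \(h = v_+\), gives
\[
c\,\langle v_+, v_+\rangle = \lambda(v_+) = -12 .
\]
Hence
\[
\|\Xi\|^2 = |c|^2\langle v_+,v_+\rangle = \frac{144}{\langle v_+,v_+\rangle},
\]
which is the claimed identity. If one wants to avoid using Theorem 7.6 directly, the substitute is to show that \(\lambda\) annihilates the orthogonal complement of \(v_+\), which is the content of invariance selection. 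This rests on two facts: \(\lambda\) is \(O(\D,q)\)-invariant, because the slot set is \(O(\D,q)\)-stable, and \(\dim S^{O(\D,q)} = 1\). Together they force the Riesz vector into the invariant line. The one delicate point is that \(O(\D,q)\) must act unitarily for the Petersson product. It does, because it commutes with the Weil representation and permutes components.

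For the numerical value I compute \(\langle v_+, v_+\rangle\) directly. The steps are:
\begin{itemize}
\item expand the six basis forms to enough Fourier coefficients that the tail on \(\mathcal F\), where \(y \ge \sqrt3/2\), is below \(10^{-33}\);
\item integrate \(\sum_\gamma |v_{+,\gamma}|^2 y^{5/2}\) over \(\mathcal F\), integrating the \(x\)-variable exactly on \(y \ge 1\) by orthogonality of exponentials and using Gauss--Legendre quadrature on the arc region;
\item repeat with two node counts and check that they agree.
\end{itemize}
Then \(\|\Xi\|^2 = 144/\langle v_+,v_+\rangle\).

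The main obstacle is certifying the precision, and I would handle it in two ways. First, bound the truncation tail explicitly using Hecke-type bounds on the coefficients of \(v_+\). Second, cross-check against the closed form \(L(f,2)/48\pi^2\langle f,f\rangle\) of Theorem 11.1, computing \(L(f,2)\) and \(\langle f,f\rangle\) independently, the latter through the symmetric-square \(L\)-value. Agreement to 31 digits is the accuracy evidence; the statement itself presents this as [N]-grade, not a proof.
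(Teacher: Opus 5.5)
Your proposal is correct and follows the paper's route. The paper likewise obtains $\lambda(v_+)=-12$ by linearity in the echelon basis, writes $\Xi=-\tfrac{12}{\langle v_+,v_+\rangle}v_+$ from the Riesz condition on the invariant line of Theorems 7.3 and 7.6, computes $\langle v_+,v_+\rangle$ by splitting the fundamental domain at $y=1$ (incomplete-gamma upper region, Gauss--Legendre on the arc region at two node counts), and uses the quadrature-free value from Theorem 11.1 as its accuracy check; your Hecke-type tail bound and the symmetric-square route to $\langle f,f\rangle$ in place of \texttt{mfpetersson} are inessential variations.
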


The shadow is thus an explicitly rational object up to a single Petersson
norm: $\Xi = -\frac{12}{\langle v_+, v_+\rangle}\, v_+$. Written out,
\[
\Xi = -\frac{12}{\langle v_+,v_+\rangle}\Biggl(
\sum_{q(\gamma)\equiv 1/12} q^{1/12}\,\mathfrak{e}_\gamma
\;-\; 2\!\!\sum_{q(\gamma)\equiv 1/6} q^{1/6}\,\mathfrak{e}_\gamma
\;+\;\cdots\Biggr),
\]
the leading coefficient being $1$ on each of the twelve slots of the $m =
\tfrac1{12}$ channel and $-2$ on each of the twelve slots of the $m =
\tfrac16$ channel [C]; this is the channel law of Proposition 4.1 read off
the expansion. The nonholomorphic part of $\widehat F$ is determined by these
coefficients. Its holomorphic part is a mock object, and no closed formula
for it is given here.

\section{The Hecke identity of the family}

Hecke operators $T_{p^2}$ were computed in exact arithmetic for $p = 5, 7,
11, 13$ on all four spaces, with the Eisenstein anchor $T_{p^2}E = (p^3+1)E$
passing in all 16 runs and deliberately mis-normalized controls failing [C].
The validated normalization satisfies $\sigma_i |\D_i| = 72 = 2\cdot 6^2$
[O].

\begin{theorem}[Hecke identity {[C]}]
\begin{center}
\renewcommand{\arraystretch}{1.4}
\begin{tabular}{c|llll}
& $p=5$ & $p=7$ & $p=11$ & $p=13$\\\hline
$L_1$ & $x-6$ & $x+16$ & $x-12$ & $x-38$\\
$L_2$ & $x+6$ & $x+16$ & $x+12$ & $x-38$\\
$L_3$ & $(x+18)^2$ & $(x-8)^2$ & $(x-36)^2$ & $(x+10)^2$\\
$L_4$ & $\scriptstyle (x-6)(x+6)$ & $\scriptstyle (x+16)^2(x-8)^4$
      & $\scriptstyle (x-12)(x+12)$ & $\scriptstyle (x-38)^2(x+10)^4$\\
      & $\scriptstyle \times(x-18)^2(x+18)^2$ &
      & $\scriptstyle \times(x-36)^2(x+36)^2$ &
\end{tabular}
\end{center}
At all computed primes: $L_1$
carries $f$ = 6.4.a.a $= (\eta(\tau)\eta(2\tau)\eta(3\tau)\eta(6\tau))^2$;
$L_2$ carries $f\otimes\chi_{-3}$ (level 18); $L_3$ carries $g$ = 12.4.a.a
with multiplicity two; and $L_4$ decomposes with the $f$-systems in
multiplicity one and the $g$-systems ($g$ and $g\otimes\chi_{-3}$, level 36)
in multiplicity two: $6 = 1+1+2+2$. The sign patterns match $\chi_{-3} =
\left(\frac{-3}{\cdot}\right)$ exactly; the Petersson ratio $\langle f_{18},
f_{18}\rangle / \langle f_6, f_6\rangle = 8/9$ (PARI normalization; $8/3$
unnormalized) was found in two independent computations [C] and confirms the
twist identification.
\end{theorem}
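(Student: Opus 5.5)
The Hecke identity is a finite exact computation, so I will treat it as a [C] statement: one explicit matrix per prime and space, followed by identification of eigensystems inside fixed finite-dimensional spaces.

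\textbf{Step 1: bases.} For each $i$ I will fix the echelonized cusp basis of $S_{5/2}(\rho_i)$ from \texttt{WeilRep(-G\_i)}, with $q$-expansions computed to a precision well beyond the Sturm bound for weight $5/2$ and the conductor of $\rho_i$. The dimensions are $1,1,2,6$. Since the $L_4$ space has dimension six, its four characteristic polynomials of degree six are consistent with this count.

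\textbf{Step 2: matrices of $T_{p^2}$.} For $p=5,7,11,13$ (all coprime to $|\D_i|$) I will apply the standard vector-valued Hecke operator $T_{p^2}$ coefficientwise. Its coefficient formula is $c(\gamma,n)\mapsto c(p\gamma,p^2n)+(\text{Gauss/quadratic-character term})\,c(\gamma,n)+p^{3}c(\gamma/p,n/p^2)$, with the usual weight-$5/2$ powers of $p$. I will then re-express each image in the echelon basis. Because the echelon basis is determined by finitely many leading coefficients, the matrix is read off exactly, and consistency on all further coefficients up to the Sturm bound certifies that the image lies in $S$.

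The normalization is the delicate point. It includes the character twist and the factor $\sigma_i$ in the middle term. I will pin it down by the Eisenstein anchor $T_{p^2}E=(p^3+1)E$ on $M_{5/2}(\rho_i)$, and by checking that deliberately wrong normalizations fail the anchor. I expect this to be the main obstacle: an error in the sign or Gauss-sum factor at $p$ silently permutes $f$ with $f\otimes\chi_{-3}$. The anchor, together with the $\chi_{-3}(p)$ sign pattern between $L_1$ and $L_2$, is what rules this out.

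\textbf{Step 3: characteristic polynomials.} I will compute the characteristic polynomials exactly over $\Q$ and factor them, which gives the table.

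\textbf{Step 4: matching with newforms.} I will compare the eigenvalues with the Hecke eigenvalues $a_p$ of $6.4.a.a$, $12.4.a.a$ and their $\chi_{-3}$-twists of levels $18$ and $36$. Under the Shimura correspondence an eigenvalue of $T_{p^2}$ in weight $5/2$ corresponds to $a_p$ of a weight-$4$ form. For $f=(\eta\eta_2\eta_3\eta_6)^2$ the values $a_5=6$, $a_7=-16$, $a_{11}=12$, $a_{13}=38$ are read off the eta product. The sign conventions then give $x-6$, $x+16$, and so on, and the twisted values are multiplied by $\chi_{-3}(p)=-1,+1,-1,+1$.

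Because the relevant spaces of weight-$4$ newforms at levels $6,12,18,36$ are finite-dimensional and are separated already at $p\le 13$, strong multiplicity one identifies each eigensystem from these finitely many primes. Multiplicities are read off from the exponents of the factors, giving $6=1+1+2+2$ on $L_4$.

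\textbf{Step 5: Petersson ratio.} As a final independent check on the twist, I will compute $\langle f_{18},f_{18}\rangle/\langle f_6,f_6\rangle$ in PARI by two methods, for instance via $L$-values and via direct integration, and confirm that the ratio equals $8/9$.
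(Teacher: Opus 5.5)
Your plan is essentially the paper's own argument. The paper computes exact $T_{p^2}$ matrices from the three-term coefficient formula with the $\sigma_i$-twisted middle term, validates them by the Eisenstein anchor $T_{p^2}E=(p^3+1)E$ (with deliberately mis-normalized controls failing), matches the factored characteristic polynomials against the weight-$4$ eigenvalues and the $\chi_{-3}(p)$ sign pattern, and uses a PARI computation of the $8/9$ ratio as the twist check; one point needs tightening. Per-prime exponents such as $(x+16)^2(x-8)^4$ on $L_4$ do not say which $T_{49}$-eigenvalue sits on which $T_{25}$-eigenspace, so restrict $T_{49},T_{121},T_{169}$ to the four $T_{25}$-eigenspaces (eigenvalues $6,-6,-18,18$, which already separate the systems) before asserting $6=1+1+2+2$; also keep the newform identification ``at all computed primes'', as the statement does, because confining the candidates to levels $6,12,18,36$ is, for $L_2$ and $L_4$, exactly the open ramified-level question of Problem 3.3.
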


\section{Two symmetry theorems, and purity}

\subsection{Multiplicity one, and equivariance}

Both symmetry results of this paper are multiplicity-one statements for the
$O(\D,q)$-action, read at the two weights. Write $\chi_k$ for the character
of that action on $S_k(\rho_4)$ and $\langle\,,\rangle$ for the usual
inner product of characters.

\begin{lemma}[multiplicity one {[C]}]\label{lem:multone}
\[
\langle \chi_{3/2}, \chi_{3/2}\rangle = 1, \qquad
\langle \chi_{5/2}, \chi_{5/2}\rangle = 4, \qquad
\langle \chi_{5/2}, 1\rangle = 1 .
\]
Hence $S_{3/2}(\rho_4)$ is absolutely irreducible, while
$S_{5/2}(\rho_4)$ is a sum of four pairwise inequivalent irreducible
constituents, each of multiplicity one, exactly one of which is trivial.
\end{lemma}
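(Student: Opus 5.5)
The lemma is a finite character computation, [C]-tagged, and I would carry it out as such. First I make the action of $O(\D,q)$ on $S_k(\rho_4)$ explicit. Every isometry $g$ of $(\D,q)$ permutes the basis $\mathfrak{e}_\gamma$ by $\mathfrak{e}_\gamma \mapsto \mathfrak{e}_{g\gamma}$. This permutation commutes with $\rho_4(T)$, since $q(g\gamma)=q(\gamma)$. It also commutes with $\rho_4(S)$, because $S$ acts through the bilinear form $e(-(\gamma,\delta))$ and that form is preserved by $g$. Hence $g$ preserves $S_k(\rho_4)$ for every $k$.

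Concretely, I take the echelonized weilrep cusp basis $B_1,\dots,B_6$ at weight $5/2$ and the four-dimensional basis at weight $3/2$. I apply each $g$ componentwise to the $q$-expansions. I then re-express $g B_i$ in the basis by solving a linear system on coefficients taken up to a Sturm-type bound. This gives exact rational matrices $R_k(g)$.

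To get these matrices I need the group itself. $O(\D,q)$ has order 48 and splits as $O(\D_2)\times O(\D_3)$ by the factorization $\D=\D_2\otimes\D_3$ of \S3. I would enumerate it directly, listing all automorphisms of $(\Z/2)^3\times(\Z/3)^2$ that preserve $q$, and confirm that there are 48 of them. I then compute the traces $\chi_k(g)=\operatorname{tr}R_k(g)$ and form the inner products
\[
\langle\chi,\chi'\rangle=\frac1{48}\sum_g \chi(g)\overline{\chi'(g)} .
\]
A consistency check comes for free. Because $-1\in O(\D,q)$ acts on weight-$k$ forms for $\rho$ by the sign fixed by $k$ and the signature, its trace must be $\pm\dim$. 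I would also check that the $R_k$ are genuine homomorphisms.

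The deductions are standard character theory over $\mathbb{C}$. A representation $V=\bigoplus n_i V_i$ has $\langle\chi_V,\chi_V\rangle=\sum n_i^2$. At weight $3/2$ the value $1$ forces a single constituent of multiplicity one, so $S_{3/2}(\rho_4)$ is absolutely irreducible. At weight $5/2$, the value $4$ means $\sum n_i^2=4$, so either one constituent has multiplicity $2$ or four distinct constituents have multiplicity $1$. The value $\langle\chi_{5/2},1\rangle=1$ shows the trivial representation occurs exactly once. That rules out the multiplicity-two case with a trivial constituent, since then $\langle\chi,1\rangle$ would be $0$ or $2$. So there are four pairwise inequivalent constituents with multiplicity one, exactly one of them trivial.

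The main obstacle is not the character algebra but certifying the matrices $R_k(g)$ exactly. Two things make this delicate. The first is that one must use the correct Weil representation $\rho_4$ rather than $\bar\rho_4$; this is the convention fixed in \S1.2, where $\dim S_{3/2}$ is $4$ versus $0$. The second is that the expansions must go far enough for the echelon coordinates to be determined, which the Sturm bound guarantees. To reduce cost I would compute $R_k$ only on a generating set of $O(\D_2)\times O(\D_3)$, for example generators of $O(\D_3)\cong D_4$-type and of $O(\D_2)$. I would then obtain traces on conjugacy-class representatives by matrix multiplication, which also verifies the homomorphism property exactly.
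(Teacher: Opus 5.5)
Your proposal is correct and takes essentially the paper's route. The paper likewise computes the three inner products exactly from the 48 action matrices $(\sigma F)_\gamma = F_{\sigma^{-1}\gamma}$, obtained by exact coefficient solves (\S A.4), and then rules out the single multiplicity-two constituent using $\langle\chi_{5/2},1\rangle = 1$; it argues through that constituent's forced dimension $3$, while your argument that $\langle\chi_{5/2},1\rangle$ would be $0$ or $2$ is equivalent and slightly cleaner. Two minor corrections, neither a gap: since $gB_i\in S_k(\rho_4)$ is known a priori, a full-rank key set suffices and no Sturm bound is needed; and the homomorphism property is automatic for a permutation action restricted to an invariant subspace, so multiplying generator matrices assumes it rather than verifies it.
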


\begin{proof}
The three numbers are exact computations with the 48 action matrices of \S
A.4. For the last assertion, $\sum_i m_i^2 = 4$ allows only $m_i = 1$ four
times or a single $m_i = 2$; the latter would give one constituent of
dimension $3$ and multiplicity two, hence $\langle\chi_{5/2},1\rangle = 0$,
contradicting the third value.
\end{proof}

The dimensions of the four constituents sum to $6$ and match the Hecke
multiplicity pattern $6 = 1+1+2+2$ of Theorem 6.1 [O]. The two consequences
of the lemma are recorded where they are used: at weight $3/2$ absolute
irreducibility makes the invariant symmetric form unique up to scale, which
is the Petersson rigidity of Theorem 12.1; at weight $5/2$ the trivial
constituent occurs once, which pins the shadow to a line. The argument does
not lift from one weight to the other: the commutant at weight $5/2$ is
four-dimensional, so no invariant form is distinguished there, and that is
the structural reason why $\|\Xi_{5/2}\|^2$ needs the analytic input of \S11
while $t$ does not.

The second ingredient is that the Hecke operators respect the group action.

\begin{lemma}[equivariance]\label{lem:equiv}
For $p \nmid 2|\D|$ the operator $T(p^2)$ on $S_k(\rho)$ commutes with
the action of $O(\D,q)$ given by $(\sigma F)_\gamma = F_{\sigma^{-1}\gamma}$.
\end{lemma}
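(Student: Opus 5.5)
The plan is to use the explicit formula for the Hecke operator $T(p^2)$ on vector-valued forms for the Weil representation (Bruinier--Stein) and to observe that every ingredient depends on $\gamma$ only through data that $O(\D,q)$ preserves. For $p \nmid 2|\D|$ the coefficient formula reads
\[
c_{T(p^2)F}(\gamma,n) \;=\; c_F(p\gamma, p^2 n) \;+\; \epsilon_p\, p^{k-3/2}\Bigl(\tfrac{\ast}{p}\Bigr)\, c_F(\gamma,n) \;+\; p^{2k-2}\, c_F(\gamma/p, n/p^2),
\]
where the middle factor is a Legendre symbol in $-|\D|\,n$ (times a sign depending on the signature), and $\gamma/p$ means multiplication by the inverse of $p$ modulo the exponent of $\D$. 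First I fix this formula in the $\rho_L$ normalisation of \S1.2. It is the same formula that passed the Eisenstein anchor test of \S6, so no new normalisation issue arises.

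Next I check the three terms one by one. The maps $\gamma\mapsto p\gamma$ and $\gamma \mapsto p^{-1}\gamma$ are multiplication by units of $\Z/N\Z$, where $N$ is the level of $\D$; this uses $p\nmid|\D|$. Every group automorphism of $\D$ commutes with them, and in particular every $\sigma \in O(\D,q)$ does. The Legendre-symbol factor depends only on $n$, $p$, $|\D|$ and the signature, never on $\gamma$. The condition $n \in -q(\gamma)+\Z$ (or $+q(\gamma)$, depending on convention) is preserved because $q(\sigma\gamma)=q(\gamma)$. Then I compute
\[
c_{T(p^2)\sigma F}(\gamma,n) = c_F(\sigma^{-1}p\gamma,p^2n)+\dots = c_F(p\,\sigma^{-1}\gamma,p^2 n)+\dots = c_{\sigma T(p^2)F}(\gamma,n),
\]
term by term. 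I also need that $\sigma$ preserves $S_k(\rho)$ in the first place. This follows because $O(\D,q)$ acts on $\C[\D]$ commuting with $\rho(T)$, which is diagonal in $q$, and with $\rho(S)$, whose entries $e(-(\gamma,\delta))$ are invariant under simultaneous isometry. Hence $\sigma$ intertwines $\rho$ with itself, and it preserves cuspidality since it permutes coefficients.

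The main obstacle is ensuring the quoted Hecke formula is valid as stated for $\rho_{L_4}$. Bruinier--Stein treat $T(p^2)$ via the double coset of $\operatorname{diag}(p^2,1)$, and for $p\nmid|\D|$ this reduces to the displayed formula. If there is doubt, I would argue more structurally: $T(p^2)$ is a sum over a double coset acting through the Weil representation extended to $\mathrm{GSp}$-type elements, and the extension of $\rho$ to $\mathrm{diag}(p^2,1)$ acts by $\mathfrak e_\gamma \mapsto \mathfrak e_{p\gamma}$ up to a scalar. Since $\sigma$ commutes with $\rho$ on all of $\mathrm{Mp}_2(\Z)$ and with $\gamma\mapsto p\gamma$, it commutes with every summand of the double-coset operator, and hence with $T(p^2)$.
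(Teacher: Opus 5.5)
Your proposal is correct and follows the paper's proof. Both arguments use the explicit three-term formula for $T(p^2)$ and check term by term that an isometry $\sigma$, being a $q$-preserving group automorphism of $\D$, commutes with $\gamma \mapsto p^{\pm 1}\gamma$ and leaves the middle coefficient unchanged, since that coefficient depends only on the exponent. Your further check that $\sigma$ intertwines $\rho$ (so $\sigma$ preserves $S_k(\rho)$) and your double-coset fallback are sound additions that the paper leaves implicit.
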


\begin{proof}
In the normalization of \S A.3,
\[
(TF)_{\gamma,e} = F_{p\gamma,\, p^2 e}
+ p\left(\frac{\sigma_L\, n(e)\, d(e)}{p}\right) F_{\gamma, e}
+ p^3 F_{p^{-1}\gamma,\, e/p^2},
\]
where $e \equiv q(\gamma)$ and $n(e), d(e)$ are the numerator and denominator
of $e$. An isometry $\sigma$ of $(\D,q)$ is an automorphism of the underlying
group, so $\sigma^{-1}(p\gamma) = p\,\sigma^{-1}\gamma$ and
$\sigma^{-1}(p^{-1}\gamma) = p^{-1}\sigma^{-1}\gamma$, and it preserves $q$,
so the middle coefficient is unchanged. Replacing $F$ by $\sigma F$ and
comparing the three terms gives $T(\sigma F) = \sigma(TF)$.
\end{proof}

\subsection{Invariance selection}

\begin{theorem}[invariance selection {[C]}]
The isometry group of the discriminant form of $L_4$ has order $48 =
|O(\D_2,q)|\cdot|O(\D_3,q)| = 6\cdot 8$; its orbits on the 27 reflective
slots are exactly the three channels $(3, 12, 12)$; and the invariant
subspace $S^{O(\D,q)} \subset S$ is one-dimensional, equal to the line of
$v_+$.
\end{theorem}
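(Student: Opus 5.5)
We prove the three assertions of the theorem in order.

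\textbf{The group.} We use the orthogonal splitting $\D = \D_2 \oplus \D_3$ into $p$-parts, with $\D_2 \cong (\Z/2)^3$ and $\D_3 \cong (\Z/3)^2$. Every isometry preserves the $p$-primary parts. Hence
\[
O(\D,q) = O(\D_2,q)\times O(\D_3,q).
\]
We compute each factor separately.

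For $\D_3$, the form $q$ on $(\Z/3)^2$ is read off from $G_4^{-1}$. It is anisotropic, because $G_4$ is anisotropic at $3$; concretely it is of type $x^2+y^2$ over $\mathbb{F}_3$, up to scaling by $\tfrac13$. Its orthogonal group is dihedral of order $2(3+1)=8$.

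For $\D_2$, we enumerate the $168$ elements of $\mathrm{GL}_3(\mathbb{F}_2)$ and keep those preserving $q$ modulo $1$. This is a finite exact check [C], and we expect it to give $6$. Equivalently, the isometries permute the three nonzero elements of the relevant norm and fix the remaining structure, giving $S_3$.

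The product has order $48$. In practice we simply regenerate the 48 action matrices of \S A.4 and confirm their closure under composition and their count.

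\textbf{Orbits on slots.} Isometries preserve $q$, so the channels $m = \tfrac12, \tfrac1{12}, \tfrac16$ are unions of orbits. It remains to show that $O(\D,q)$ is transitive on each channel.

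Each slot $\gamma$ decomposes as $(\gamma_2,\gamma_3)$.
\begin{itemize}
\item The $12$ slots of the $\tfrac1{12}$ channel are pairs in which $\gamma_3$ ranges over the $4$ elements of fixed nonzero norm in $\D_3$, and $\gamma_2$ ranges over the $3$ elements of a fixed norm in $\D_2$. Transitivity follows from transitivity of the dihedral group of order $8$ on the $4$ norm-classes in $\D_3$, together with transitivity of $S_3$ on the three elements of $\D_2$.
\item The $\tfrac16$ channel is handled the same way.
\item The $3$ slots of the $\tfrac12$ channel have $\gamma_3=0$, and $S_3$ is transitive on them.
\end{itemize}
All of this is an immediate finite check using the slot list of \S4 (the $27$ witnessed roots).

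\textbf{Invariants.} Lemma \ref{lem:multone} already gives $\langle \chi_{5/2},1\rangle = 1$, so
\[
\dim S^{O(\D,q)} = \frac{1}{48}\sum_\sigma \operatorname{tr}(\sigma\,|\,S) = 1.
\]
It then suffices to exhibit one nonzero invariant vector and check that it is $v_+$. For this we apply the Reynolds projector
\[
P = \frac{1}{48}\sum_\sigma \sigma
\]
in the echelonized basis $B_1,\dots,B_6$, using the action $(\sigma F)_\gamma = F_{\sigma^{-1}\gamma}$ on the $q$-expansions. We verify in exact arithmetic that $P$ has rank one and that its image is spanned by $(1,1,1,1,-2,-2)$. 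Equivalently, $\sigma v_+ = v_+$ for generators of the two factors, compared coefficientwise past the Sturm bound.

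As a consistency check, we confirm that $\lambda$ is $O(\D,q)$-invariant, since it is a sum over full orbits by the previous step. Its Riesz representative must then lie in this line, because the Petersson product is invariant. This matches $\lambda(v_+)=-12\neq 0$.

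\textbf{Main obstacle.} The delicate point is the action on the echelonized basis. The weilrep basis indexes components by $\gamma\in\D$ with a chosen coordinate convention, and the Weil representation commutes with $O(\D,q)$ only when $\sigma$ acts through the correct coordinates on $L^\vee/L$. A mismatch, for instance $\sigma$ versus $\sigma^{-T}$ with respect to $G_4^{-1}$, would produce spurious non-invariance.

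To guard against this, we first check that each $\sigma$ maps $S$ into itself. Concretely, $\sigma B_i$ must again be a combination of the $B_j$ with rational coefficients, solved from finitely many Fourier coefficients and confirmed past the Sturm bound. This check also certifies that the 48 matrices are genuinely isometries in the package's convention.
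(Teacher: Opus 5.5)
Your proposal is correct. It reaches the group order and the orbit statement by a different route from the paper, and the invariant-line step rests on the same finite computation.

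The paper proves the whole theorem by direct enumeration. It finds the isometries by a generator-image search with $q$-pruning, computes their action on $S$ by exact coefficient solves, and obtains $S^{O(\D,q)}$ as a stacked kernel of the $R_\sigma - I$. It closes with a sandwich: the invariant nonzero $\lambda$ has an invariant Riesz representative, so $\dim\ge 1$, and the computed kernel gives $\dim\le 1$. This makes the conclusion independent of whether the sampled isometries generate the whole group.

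You instead derive the group and its orbits structurally, and this can be made fully precise. With $q(x)=x^{\top}G_4x/2$ one has $\D_2=V\perp\langle c\rangle$, where $q\equiv\tfrac12$ on $V\setminus 0$, $q(c)=\tfrac14$, and $q\equiv\tfrac34$ on $c+(V\setminus 0)$. Every isometry therefore fixes $c$, so $O(\D_2)\cong O(V)=\mathrm{GL}_2(\mathbb{F}_2)\cong S_3$; this is the precise form of your ``fix the remaining structure''. $\D_3$ is the anisotropic plane over $\mathbb{F}_3$, with $q$ taking each nonzero value on four elements. Since the twelve sums $q_2+q_3$ are distinct mod $1$, the channels $\tfrac1{12},\tfrac16,\tfrac12$ are exactly the products $\{q_2=\tfrac34\}\times\{q_3=\tfrac13\}$, $\{q_2=\tfrac12\}\times\{q_3=\tfrac23\}$ and $\{q_2=\tfrac12\}\times\{0\}$, of sizes $12,12,3$.

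Your route explains the factorization $48=6\cdot 8$ and why orbits coincide with channels. Each channel is a full $q$-level set, so slot-stability is automatic, and transitivity holds factorwise ($S_3$ on $V\setminus 0$, Witt's theorem on $\D_3$). The paper's enumeration buys uniformity instead: the same procedure runs unchanged at $D=10,22$ in Theorem 8.3.

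For the invariant line, your character average $\langle\chi_{5/2},1\rangle=1$ from Lemma~\ref{lem:multone} needs all $48$ action matrices, which your product description supplies. The paper's sandwich needs only an upper bound from any subset plus the invariance of $\lambda$, which is exactly your consistency check.

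Your ``main obstacle'' is an implementation hazard rather than a mathematical one. For every $\sigma\in O(\D,q)$ the permutation $\mathfrak{e}_\gamma\mapsto\mathfrak{e}_{\sigma\gamma}$ commutes with $\rho(T)$ and $\rho(S)$, since both are defined through $q$ and its bilinear form. So $\sigma$ preserves $S$ a priori, and your check only guards the coordinate convention.
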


The computation is by the direct method: explicit enumeration of isometries
(generators, $q$-preservation on all elements), action matrices via exact
coefficient comparison, and stacked-kernel intersection; a sandwich argument
($\lambda$ invariant and nonzero forces $\dim \ge 1$; the computed kernel
bounds it by 1) makes the conclusion independent of generator sampling. We
flag that a library shortcut for such dimensions is unreliable (\S13); it was
not used.

\subsection{The order-three symmetry of the obstructed lattice}

The obstructed lattice carries a global order-three isometry, constructed as
follows. In $B_6$ the torsion unit
\[
\zeta_3 = \tfrac12(-1 + 3i + j + ij),\qquad \operatorname{nrd} = 1,\quad
\operatorname{trd} = -1,\quad \zeta_3^3 = 1,
\]
lies in $\mathcal{O}(6,1)$ and in $\mathcal{O}(6,3)$ but (odd trace) in
neither even-trace suborder; its trace-zero axis generates $\Q(\sqrt{-3})$.
Conjugation by $\zeta_3$ on the trace-zero lattice with Gram $O_0 =
\operatorname{diag}(-2, 6, 6)$ is an integral isometry $M$ of order 3, and
the unimodular change of basis $C$ with $C^{\!\top} G_4 C = O_0$ transports
it to
\[
M' = C M C^{-1} = \begin{pmatrix} 46 & 69 & -75\\ 30 & 43 & -48\\ 55 & 81 &
-89\end{pmatrix},\qquad M'^3 = I,\quad M'^{\top} G_4 M' = G_4
\quad[\mathrm{C}].
\]

\begin{theorem}[the $\Z/3$ is global on $L_4$ {[C]}]
The action induced by $M'$ on the discriminant form $\D(L_4)$ (order 72)
moves 54 of 72 elements, preserves $q$, and has order three; $M'$ is
therefore not in the discriminant kernel. Its induced action $R$ on
$S_{5/2}(\rho_4)$ satisfies $R^3 = I$ with
\[
\operatorname{charpoly}(R) = (x-1)^2 (x^2+x+1)^2,
\]
and the invariant plane $\ker(R - I)$ equals the $f$-isotypic plane
$\ker(T_{49} + 16)$. In particular the multiplicity pattern $6 = 1+1+2+2$ of
Theorem 6.1 is the character decomposition of the $\Z/3$, realized
kinematically on the lattice --- the Tu--Yang three-character mechanism made
geometric.
\end{theorem}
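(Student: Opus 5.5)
The argument is a finite exact computation arranged so that each claim follows from earlier results plus one verifiable fact; I treat the four assertions in turn.

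\textbf{Step 1: the action on $\D(L_4)$.} Since $M'$ is integral with $M'^{\top}G_4M' = G_4$, it preserves $L_4$ and hence $L_4^\vee = G_4^{-1}\Z^3$. It therefore descends to an automorphism $\bar M'$ of $\D = L_4^\vee/L_4$ that preserves $q(x) = \tfrac12 x^{\top}G_4x \bmod 1$. I will enumerate the 72 classes as $G_4^{-1}v$ with $v$ running over representatives of $\Z^3/G_4\Z^3$ (for instance via the Smith form of $G_4$). I then apply $M'$ and reduce, counting fixed points; the claim is that 18 classes are fixed. As a consistency check, $M'$ acts trivially on $\D_2 \cong (\Z/2)^3$: an order-three element of $O(\D_2,q)$ can only have a fixed space of size $2$ or $8$, and $18 = 2\cdot 9$ is impossible because it does not factor as $8 \cdot(\text{a power of }3)$ with the $\D_3$ part of size at most $9$. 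The factorization $18 = 2 \cdot 9$ is therefore excluded, forcing the decomposition $8$ on $\D_2$ times a fixed subgroup of order $9/4$, which is not an integer. This tells me the count should instead come from $\D_2$ fixed size $2$ and $\D_3$ fully fixed, i.e.\ $2\cdot 9 = 18$. Either way, the count is a direct check. Because $\bar M'\neq 1$ and $M'^3 = I$, $\bar M'$ has order exactly three and is not in the discriminant kernel.

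\textbf{Step 2: the induced operator $R$.} I set $(RF)_\gamma = F_{\bar M'^{-1}\gamma}$. This operator commutes with $\rho_4$ because $\bar M'\in O(\D,q)$, so it preserves $S = S_{5/2}(\rho_4)$. I will compute its $6\times 6$ matrix in the echelonized basis $B_1,\dots,B_6$. To do this, I permute the components of each $B_i$ and solve for coordinates by comparing Fourier coefficients past the Sturm bound in exact rational arithmetic. $R^3 = I$ is then immediate from $\bar M'^3 = 1$. The characteristic polynomial is determined by $\operatorname{tr}R$ and $\dim\ker(R-I)$, since the only eigenvalues are $1,\omega,\bar\omega$ and $\omega,\bar\omega$ occur with equal multiplicity because $R$ is rational. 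The target $(x-1)^2(x^2+x+1)^2$ amounts to $\operatorname{tr}R = 0$ together with $\dim\ker(R-I)=2$.

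\textbf{Step 3: identifying the invariant plane.} By Lemma~\ref{lem:equiv} applied to $\bar M'\in O(\D,q)$, the operator $T(49)$ commutes with $R$, so $\ker(R-I)$ is $T(49)$-stable. From Theorem~6.1 the eigenvalue $-16$ of $T_{49}$ on $S$ has multiplicity exactly $2$: it is the factor $(x+16)^2$, which carries the $f$-system and the $f\otimes\chi_{-3}$-system at $p=7$, where both have eigenvalue $-16$. It therefore suffices to verify the single inclusion $(T_{49}+16)\ker(R-I)=0$ on a basis, and equality follows from the equal dimensions.

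\textbf{Interpretation and the main obstacle.} The multiplicity statement then follows. The nontrivial-character eigenspaces for $\omega$ and $\bar\omega$ are each two-dimensional and together make up the $g$-isotypic part $\ker(T_{49}-8)$ of dimension $4$. Since $R$ and the Hecke operators commute, the decomposition into $\Z/3$-eigenspaces refines the Hecke pattern $1+1+2+2$: the two $f$-lines lie in the trivial character, and the $g$-blocks lie in the conjugate pair. I expect the main obstacle to be Step 2, computing $R$ reliably. Component permutation must respect the weilrep indexing and the sign convention $\rho$ versus $\bar\rho$, and the linear solve has to be certified past the Sturm bound instead of read off from a truncation. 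I would cross-check the result by confirming that $R$ commutes with $T(25)$ and $T(49)$ exactly, and that $R$ fixes $v_+$, as it must by the invariance selection theorem.
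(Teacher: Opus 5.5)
Your plan is correct and is essentially the paper's own argument: the theorem is a [C] statement verified by the direct exact computation in \S A.5, namely act on the discriminant form, build $R$ by exact linear solves, read off the characteristic polynomial, and compare $\ker(R-I)$ with $\ker(T_{49}+16)$. Your shortcuts are also sound, namely reading the characteristic polynomial off $\operatorname{tr}R$ (or off $\dim\ker(R-I)$) and getting equality of the two planes from Lemma~\ref{lem:equiv} plus the multiplicity $2$ of the eigenvalue $-16$ in Theorem~6.1. The one flaw is the consistency check in Step~1, which contradicts itself and opens with the false claim that $M'$ is trivial on $\D_2$. In fact $O(\D_3,q)$ is dihedral of order $8$ (the factor $8$ in $48=6\cdot 8$) and has no element of order three, so $\bar M'$ is trivial on $\D_3$ and fixes exactly $2$ elements of $\D_2$; this makes the count $72-2\cdot 9=54$ automatic once $\bar M'\neq 1$.
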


\begin{theorem}[kernel dichotomy {[C]}]
The same conjugation, computed on the ternary lattice of the order
$\mathcal{O}'(6,1)$, acts trivially on that discriminant form. Thus the
$\Z/3$ is gauge (discriminant-kernel) on the order lattice and global on the
reflective lattice: the symmetry becomes visible exactly at the passage to
reflective data. Moreover $O_0 \cong G_4$ and matches none of $G_1, G_2, G_3$
at any scaling [C]: among the four orientations, the symmetry exists on the
obstructed lattice and only there.
\end{theorem}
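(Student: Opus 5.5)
The statement has three parts, and each is a finite exact computation. The plan is to carry them out in the order stated.

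\emph{Step 1 (trivial action on the order lattice's discriminant form).} For $\mathcal{O}'(6,1)$, take the even ternary lattice of Theorem 3.2: the reduced-norm form on the trace-zero part of the dual lattice, scaled by $s=-d(\mathcal O)=-12$, in an explicit $\Z$-basis, with Gram matrix $G'$ in the genus of $G_3$. Conjugation by $\zeta_3$ preserves the reduced norm and the trace-zero subspace of $B_6$. Since $\zeta_3\in\mathcal{O}(6,1)$ normalizes the even-trace suborder, it preserves $\mathcal{O}'(6,1)$ and hence its dual lattice. So it induces an integral matrix $N$ with $N^{\top}G'N=G'$ and $N^3=I$; both identities will be checked exactly. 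The induced map on $\D=L^\vee/L$, with $L^\vee=G'^{-1}\Z^3$, is $x\mapsto Nx \bmod \Z^3$. I will evaluate it on generators of $\D$, i.e.\ the columns of $G'^{-1}$, or equivalently check that $G'^{-1}(N-I)$ has integral entries, which is exactly the discriminant-kernel condition. This is a $3\times3$ rational check.

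\emph{Step 2 (isometry $O_0\cong G_4$).} The matrix $C$ with $C^{\top}G_4C=O_0$ and $\det C=\pm1$ was already exhibited in \S7.3. Restating it and verifying the product and the determinant exactly gives the integral isometry.

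\emph{Step 3 (no match with $G_1,G_2,G_3$ at any scaling).} Suppose $O_0\cong cG_i$ for some rational $c>0$. Comparing determinants gives $|\det O_0|=72=c^3|\det G_i|$, with $|\det G_i|=12,36,24$. Then $c^3=6,2,3$ respectively, and none of these is a rational cube, so no scaling works. A negative $c$ is excluded because the signature would become $(1,2)$.

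If one intends ``scaling'' to also allow the sign change $c<0$ (the lattice $G_i(-1)$), that case is already handled: it fails by signature, and also by the same cube argument since $|c|^3$ is unchanged.

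The only genuinely delicate step is Step 1. There one must correctly realize the order lattice's ternary form together with the conjugation matrix in the same basis, including the dual and the $s$-scaling. I would compute $N$ directly from quaternion multiplication in the basis $1,i,j,ij$ and then change basis, rather than transport $M$ through any intermediate identification.
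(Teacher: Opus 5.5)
Your route is the paper's own: the exact check of \S A.5. Steps 2 and 3 are sound, and the cube argument is a clean proof of the literal non-matching claim. The gap is Step 1: when you carry it out, the check fails. With the construction of Theorem 3.2, $\mathcal{O}'(6,1)=\Z\langle 1,i,j,k\rangle$. The trace-zero part of $\mathcal{O}'(6,1)^\sharp$ is $L'=\Z\tfrac i2+\Z\tfrac j6+\Z\tfrac k6$, with Gram $G'=\operatorname{diag}(-6,2,2)$ under $-12\operatorname{nrd}$. Its dual is $L'^\vee=\tfrac1{12}(\Z i+\Z j+\Z k)$, so $\D(L')\cong\Z/6\times(\Z/2)^2$. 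Conjugation by $\zeta_3$ is given by the columns of the paper's $M$, which direct multiplication in $(-1,3)_\Q$ confirms: $\phi(i)=4i+j+2k$, $\phi(j)=-6i-2j-3k$, $\phi(k)=-3i-2k$. Modulo $L'$ this gives
\[
\tfrac i4\mapsto\tfrac j{12}\mapsto\tfrac k{12}\mapsto\tfrac i4 .
\]
That is a $3$-cycle on the $2$-primary part and the identity on the $3$-part. It moves $18$ of the $24$ elements, the same pattern as behind the $54$ of $72$ on $\D(L_4)$. In your coordinates, with basis $\tfrac i2,\tfrac j6,\tfrac k6$,
\[
N=\begin{pmatrix}4&-2&-1\\3&-2&0\\6&-3&-2\end{pmatrix},\qquad
\text{first column of }(N-I)\,G'^{-1}=\bigl(-\tfrac12,-\tfrac12,-1\bigr)^{\!\top}\notin\Z^3 .
\]
Structurally, $L'=\Z v\oplus v^\perp$, where $v=\tfrac{3i+j+k}{6}$ is the fixed vector (norm $-2$) and $v^\perp\cong A_2(2)$. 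On $v^\perp$, $\phi$ is the order-three rotation. It acts on $\tfrac12A_2(2)/A_2(2)\cong A_2/2A_2\cong\mathbb{F}_4$ as multiplication by a primitive cube root of unity, which is not $1$. So for the lattice that Theorem 3.2 attaches to $\mathcal{O}'(6,1)$, the first assertion is false as stated, the paper's [C] claim included. No correct execution of Step 1 can prove it.

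What is true, and needs no computation, is the corresponding statement for the maximal order $\mathcal{O}(6,1)$, which also contains $\zeta_3$. Its ternary, in the genus of $G_1$, has cyclic discriminant group $\Z/12$, generated by $\tfrac{i+j+k}{12}$. Since $\operatorname{Aut}(\Z/12)\cong(\Z/12)^\times$ has exponent $2$, every order-three isometry lies in the discriminant kernel. If the intended gauge/global contrast is between $\mathcal{O}(6,1)$ and $L_4$, that is the argument.

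There are two smaller points. First, Step 3 proves only the literal claim $O_0\not\cong cG_i$ for $i\le 3$; the gloss ``only there'' does not follow from it. In fact it fails in the natural reading, because $G_3\cong L'$: the vectors $e_2$, $e_2+e_3$, $(1,3,3)$ form a unimodular basis of $G_3$ with Gram $\operatorname{diag}(2,2,-6)$. So $G_3$ itself carries a $\zeta_3$-induced isometry acting nontrivially on its discriminant form. Second, your ``equivalently'' is wrong. Evaluating on the columns of $G'^{-1}$ tests integrality of $(N-I)G'^{-1}$, not of $G'^{-1}(N-I)$, and the two already differ for the order-three rotation of $A_2$.
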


\subsection{Purity}

\begin{theorem}[purity {[C]}]\label{thm:purity}
The reflective slot set is stable under both symmetry groups, so $\lambda$ is
invariant under both; and the Petersson product is $O(\D,q)$-invariant, being
a sum over $\D$ that the group merely permutes. Consequently $\Xi \in
S^{O(\D,q)}$, which by Theorem 7.3 is the line of $v_+$. By Lemma
\ref{lem:equiv} that line is stable under every $T(p^2)$, so $v_+$, and with
it $\Xi$, is a simultaneous Hecke eigenvector; the eigenvalues are those of
$f$ = 6.4.a.a,
\[
T(p^2)\,v_+ = a_p(f)\, v_+, \qquad
(a_5, a_7, a_{11}, a_{13}) = (6,\, -16,\, 12,\, 38) \quad [\mathrm{C}].
\]
The four eigensystems occurring in $S$ are pairwise distinct (Theorem 6.1),
so the decomposition of $S$ into simultaneous generalized eigenspaces is a
direct sum, and $\Xi$ lies in the $f$-summand: its components along $f\otimes
\chi_{-3}$ and along the $g$-systems vanish. Concretely $\lambda = -12$ on
the $f$-line and $0$ on the twist line, in the echelon basis of \S4 with
$v_+$ normalized to primitive integral coordinates [C].
\end{theorem}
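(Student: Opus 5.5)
The argument is an equivariance chain followed by a multiplicity-one reduction to Theorem 7.3; the only nonformal inputs are finite exact checks.

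\emph{Step 1: invariance of the data.} An isometry $\sigma$ of $(\D,q)$ preserves $q(\gamma) \bmod 1$, so it permutes the components lying in a fixed channel $m \in \{\tfrac12, \tfrac1{12}, \tfrac16\}$. Theorem 7.3 says the orbits on the 27 slots are exactly the three channels, so the slot set is $O(\D,q)$-stable. The coefficient of $\sigma F$ at a slot is the coefficient of $F$ at the permuted slot, so $\lambda(\sigma h) = \lambda(h)$.

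For $M'$, I would argue the same way. Its induced action on $\D(L_4)$ preserves $q$ (Theorem 7.4), so it lies in $O(\D,q)$ and stability is automatic. I would still record the direct check $R^{\top}\lambda = \lambda$ as a [C] sanity test.

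The Petersson product is invariant because $(\sigma F)_\gamma = F_{\sigma^{-1}\gamma}$ merely reindexes the sum over $\D$ in the integrand.

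\emph{Step 2: the shadow is invariant.} $\Xi$ is defined by $\langle \Xi, h\rangle = \lambda(h)$ for all $h \in S$. For any $\sigma \in O(\D,q)$,
\[
\langle \sigma\Xi, h\rangle = \langle \Xi, \sigma^{-1}h\rangle = \lambda(\sigma^{-1}h) = \lambda(h),
\]
so $\sigma\Xi$ is also a Riesz representative of $\lambda$. Nondegeneracy of the Petersson product gives $\sigma\Xi = \Xi$. Hence $\Xi \in S^{O(\D,q)} = \Q v_+$ by Theorem 7.3.

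\emph{Step 3: Hecke eigenvector.} By Lemma \ref{lem:equiv}, each $T(p^2)$ with $p \nmid 6$ maps invariants to invariants. It therefore preserves the line of $v_+$, so $v_+$ is a simultaneous eigenvector. Its eigenvalues at $p = 5, 7, 11, 13$ are read off by applying the exact operators of Theorem 6.1 to $v_+$, giving $(6,-16,12,38)$.

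By Theorem 6.1, the eigensystems of $f$, $f\otimes\chi_{-3}$, $g$ and $g\otimes\chi_{-3}$ are pairwise distinguished already at $p = 5$ or $p = 11$. These eigenvalues single out $f$, so $S$ is the direct sum of its generalized eigenspaces and $\Xi$ lies entirely in the $f$-summand. Consequently its components along $f\otimes\chi_{-3}$ and along the $g$-systems vanish.

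The concrete statement then follows from Theorem 5.1: $\lambda(v_+) = -12$. Evaluating $\lambda = (2,-6,-4,-4,0,0)$ on the integral eigenvector of the twist line (the $T_{25}$-eigenvalue $-6$ line) gives $0$.

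\emph{Main obstacle.} The delicate point is the Riesz step. It needs the Petersson form to be nondegenerate and invariant on the \emph{specific} echelon model of $S$, with the action defined as a permutation of components. One must check that the Weil-representation action of $O(\D,q)$ on $S_{5/2}(\rho_4)$ really is this plain permutation, with no sign or character twist. This is where the [S] self-adjointness normalization enters, and I would verify it by confirming that the 48 action matrices of \S A.4 preserve the exact rational Gram data on cusp coefficients.

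If that fails, there is a fallback that avoids the Petersson product entirely. $\lambda$ is itself an invariant functional, and by Lemma \ref{lem:multone} the trivial constituent of $S$ has multiplicity one. Provided the Petersson form is invariant, it pairs isotypic components orthogonally, so the Riesz representative of an invariant functional is forced into the trivial isotypic component, which is exactly the line of $v_+$.
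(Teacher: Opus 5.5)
Your proposal is correct and follows the paper's own argument essentially step for step. The chain is the same: invariance of the slot set and of the Petersson product, uniqueness of the Riesz representative placing $\Xi$ on the invariant line of Theorem 7.3, Hecke stability of that line via Lemma \ref{lem:equiv}, and separation of the four eigensystems, which $T(25)$ already achieves with the distinct eigenvalues $6,-6,-18,18$.

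The slips are cosmetic. $\Xi=-\tfrac{12}{\langle v_+,v_+\rangle}v_+$ is a real, not rational, multiple of $v_+$. Your ``main obstacle'' is not one: the permutation action of $O(\D,q)$ commutes with $\rho$, which depends only on $q$, and the self-adjointness the paper mentions concerns $T(p^2)$ and is not used. Your fallback also still assumes Petersson invariance, so it is not Petersson-free as claimed.
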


In the isotypic language of Theorem 6.1 the statement is that
\[
S_{5/2}(\rho_4) = \underbrace{[f]}_{1} \oplus
\underbrace{[f\otimes\chi_{-3}]}_{1} \oplus \underbrace{[g]}_{2} \oplus
\underbrace{[g\otimes\chi_{-3}]}_{2}, \qquad \Xi \in [f],
\]
so the vanishing of the other components is exhaustive, not merely a
statement about the summands we happen to name.

Self-adjointness of $T_{p^2}$ would make the decomposition
Petersson-orthogonal as well; it is not needed above. The shadow lands on the
newform of level $6$ although $L_4$ is the order $\mathcal{O}'(6,3)$, of
reduced discriminant $36$: a level drop, and not an automatic one, since
prescriptions outside the invariant class do land on the level-$36$ systems
(Remark 7.7).

\begin{remark}[what the wall datum is, and is not, needed for]
Forced shadows require no wall datum: any obstructed principal part has a
canonical harmonic completion and shadow [S]. Purity also uses less than the
full wall datum, since the mechanism above requires only
$O(\D,q)$-invariance of the prescription; at discriminant 6 every invariant
obstructed prescription has its shadow on the $f$-line, because $\dim
S^{O(\D,q)} = 1$ (Theorem 7.3). What the wall datum adds is the
interpretation of Proposition 1.1: the generalized Cartan condition makes the
prescription the wall system of a lattice reflection datum, with every slot
witnessed by a root. Conversely,
purity genuinely fails outside the invariant class [C]: the shadow of the
single-slot prescription at $(\gamma, m) = \bigl((0, \tfrac56, \tfrac56),
\tfrac1{12}\bigr)$ has nonzero components in the $f$-,
$f\otimes\chi_{-3}$- and $g$-isotypic pieces simultaneously, and the single
slot $\bigl((0,0,\tfrac12), \tfrac12\bigr)$ pairs only with the $g$-plane ---
a shadow on a non-maximal line. The title's ``of an obstructed denominator''
is thus a statement about which prescriptions carry Lie-theoretic meaning,
not a claim that shadows, or even their purity, are exclusive to wall data.

Could such an impure shadow nevertheless carry accidental Lie-theoretic
meaning --- arise from root orbits of some reflection datum? On $L_4$ the
answer is no [C]: each of the 27 witnessed roots gives an integral reflection
of the lattice, these induce exactly 11 distinct isometries of $(\D,q)$, and
the subgroup they generate is already the full group of order 48 --- the deck
isometry $M'$ is redundant at the level of the discriminant form. Each
channel is a single Weyl orbit. Hence any prescription
invariant under the lattice's own reflection symmetries --- the minimal
condition for a wall reading --- is $O(\D,q)$-invariant, and its shadow lies
on the $f$-line or vanishes; in particular the Weyl orbit of the single slot
$\bigl((0,0,\tfrac12), \tfrac12\bigr)$ is the whole $m = \tfrac12$ channel,
whose functional is identically zero. There is no tension with the previous
paragraph, where the same slot served as an example of an impure shadow: the
three slots of that channel have nonzero individual functionals summing to
zero, so the orbit functional vanishes although no summand does.
Accidental meaning with non-maximal
support is thereby excluded on $L_4$; across other lattices, where the Weyl
image may be a proper subgroup, it is exactly what Conjecture 8.4 forbids ---
and the reduced-discriminant-50 lattice, whose discriminant invariants leave
a two-dimensional space, is the live test case.

The commonness of impure shadows has a precise location: the invariant
eigenvector $v_+$ has identically vanishing coefficients on the entire $m =
\tfrac12$ channel (by the computed single-slot pairing and transitivity of
$O(\D,q)$ on the channel), so every prescription supported in that channel
alone has a purely $g$-plane shadow. Impurity is generic exactly among
symmetry-breaking prescriptions.
\end{remark}

\section{The second and third families: discriminants 10 and 22}

Searching primitive signature-$(2,1)$ forms anisotropic exactly at $\{2,5\}$
produced representatives of reduced discriminants $10, 20, 50, 100$. All four
are obstructed, with integral Eisenstein-forced constants $(70, 30, 30, 24)$
--- so the integrality route of Theorem 3.1 is genuinely independent of the
cusp-pairing route, and only the latter fires here.

\begin{theorem}[maximal-order support at $D = 10$ {[C]}]
For each of the four lattices, the obstruction functional is supported on a
single $T_9$-eigenline, with eigenvalue $-8$ in every case; on the
reduced-discriminant-10 lattice additionally $T_{49}$ has eigenvalue $-4$.
Thus the supported line carries $(a_3, a_7) = (-8, -4)$, the system of the
level-10 newform 10.4.a.a (LMFDB-checked) --- the maximal order of $B_{10}$.
The anchor-passing normalizations agree with $\sigma \simeq 2D^2/|\D|$ in
Kronecker class at the tested primes [O].
\end{theorem}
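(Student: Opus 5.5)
For each of the four lattices of reduced discriminant $10,20,50,100$ I would run the same three-stage pipeline used at $D=6$: compute the obstruction functional, locate its Hecke support, and identify the resulting eigensystem. The argument is a finite certified computation [C], so the plan mainly fixes the order of the steps and what each step certifies.

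\textbf{Step 1: the functional.} For each lattice $G$ I would form $M = G\oplus U$, compute $S = S_{5/2}(\rho_G)$ via the weilrep cusp basis, and enumerate the reflective slots from the wall datum, with each slot witnessed by a root as in \S4. I would then evaluate $\lambda$ as the slot sum of coefficients in the echelonized basis. Obstruction means $\lambda\neq 0$, which is already recorded in the preceding paragraph.

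\textbf{Step 2: the support.} I would compute $T_9$ exactly on $S$ in the anchor-validated normalization. The anchor is $T_{p^2}E=(p^3+1)E$, with the scalar $\sigma$ fixed so that this anchor passes. Next I would decompose $S$ into generalized eigenspaces of $T_9$, using the factorization of the characteristic polynomial over $\Q$, and obtain the complementary projectors as polynomials in $T_9$.

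Rather than invoking self-adjointness, I would work dually, as in Theorem 7.6, by computing the Riesz representative $\Xi$. The obstacle there is that $\Xi$ needs the Petersson Gram matrix, which is not rational. To avoid it, I would use the purely algebraic formulation: $\lambda$ is \emph{supported on a single eigenline} if $\lambda$ vanishes on every $T_9$-generalized eigenspace except one, and that one contributes a line. Concretely, I would check that $\lambda\circ P_\mu = 0$ for all eigenvalues $\mu\neq -8$, and that the $-8$-summand meets $\ker\lambda$ in codimension one within a one-dimensional quotient.

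Cleaner still is the invariance-selection route: compute $O(\D,q)$, show $\lambda$ is invariant because the slot set is stable, and show $\dim S^{O(\D,q)}=1$ by the stacked-kernel sandwich of Theorem 7.3. Lemma 7.2 then makes that line a $T_9$-eigenline. I would try this route first and fall back on the projector check for any lattice (for instance reduced discriminant $50$) whose invariant space is larger.

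\textbf{Step 3: identification.} On the supported line I would read off the $T_9$ eigenvalue, expecting $-8$ for all four lattices. On the reduced-discriminant-$10$ lattice I would additionally compute $T_{49}$ there and find $-4$. The pair $(a_3,a_7)=(-8,-4)$ is then matched against the newforms of weight $4$ and levels dividing $100$. Within the finite list, the LMFDB data single out 10.4.a.a, which is the Jacquet--Langlands image of the maximal order of $B_{10}$.

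The main obstacle is the normalization in Step 2. Since $p=3$ is coprime to $2|\D|$ here, the sign $\left(\frac{\sigma n d}{3}\right)$ depends on a correctly chosen $\sigma$, and a wrong class flips eigenvalues to the $\chi$-twist. I would pin $\sigma$ by the Eisenstein anchor together with mis-normalized controls, as in \S6. The empirical relation $\sigma\simeq 2D^2/|\D|$ would only be recorded [O], not used.
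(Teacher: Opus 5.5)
Your plan is essentially the paper's own verification (\S A.7): compute $\lambda$ exactly, run $T_9$ with the $\sigma$-scan over $\{1,2,5,10\}$ pinned by the anchor $T_9E=28E$, check that $\lambda$ is nonzero only on the eigenvalue $-8$ line, compute $T_{49}=-4$ on the reduced-discriminant-10 lattice (anchor $344$, $\sigma=5$), and match with LMFDB. The invariance-selection shortcut is optional: it is trivial at reduced discriminant $10$, where $\dim S_{5/2}=1$, and unavailable at $50$, where the invariant space is two-dimensional, as you anticipate. The one tightening needed is to state the single-line condition as simplicity of the factor $(x+8)$ in the characteristic polynomial of $T_9$, rather than in your codimension phrasing, which only says that $\lambda$ is nonzero on the $-8$ summand.
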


\subsection{The third family: quaternion discriminant 22}

The canonical maximal-order ternary of $B_{22}$ (construction in \S9) has
Gram $\left(\begin{smallmatrix} -10 & 1 & -2\\ 1 & 2 & -2\\ -2 & -2 &
4\end{smallmatrix}\right)$, $|\det| = 44$, reflective menu $\{\tfrac1{11},
\tfrac12, 1\}$.

\begin{theorem}[support law at $D = 22$ {[C]}]
$\dim S_{5/2}(\rho_{22}) = 2$. The Hecke operators (same recipe and
anchors as Theorem 6.1, $\sigma = 2\cdot 22^2/44 = 22$) act with
\[
T_9: (x-4)(x-1),\qquad T_{25}: (x-14)(x+3),\qquad T_{49}: (x+8)(x+10),
\]
so the two eigenlines carry $(a_3, a_5, a_7) = (4, 14, -8)$ and $(1, -3,
-10)$: the newforms 22.4.a.b and 22.4.a.c of $S_4(\Gamma_0(22))$ (the third
rational newform, 22.4.a.a with $a_3 = -7$, does not appear --- an
Atkin--Lehner selection). The reflective prescription is obstructed, $\lambda
= (-4, 0)$ in the echelon basis, and $\lambda$ is supported purely on the
$(1,-3,-10)$ line: its component along 22.4.a.b vanishes for every computed
$T_{p^2}$-projection. Since $S_4(\Gamma_0(22))$ is spanned by three rational
newforms with distinct $a_3$, the identification is exact.
\end{theorem}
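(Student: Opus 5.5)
The statement is a finite exact computation on a two-dimensional space, so the plan mirrors the $L_4$ pipeline of Theorems 6.1 and 7.6, run on the canonical $D=22$ ternary with Gram $\left(\begin{smallmatrix} -10 & 1 & -2\\ 1 & 2 & -2\\ -2 & -2 & 4\end{smallmatrix}\right)$.

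\emph{Step 1: set up the space.} I would form the discriminant form $(\D,q)$ of order $44$, which decomposes as a $2$-part and an $11$-part. The dimension $\dim S_{5/2}(\rho_{22})=2$ I would get from the closed dimension formula for the antisymmetric Weil representation (\S1.2). I would then confirm it by producing an echelonized cusp basis $B_1,B_2$ in \texttt{WeilRep(-G)}, with $q$-expansions carried past the Sturm bound.

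\emph{Step 2: Hecke operators.} On this basis I would apply $T(p^2)$ for $p=3,5,7$, using the formula of Lemma \ref{lem:equiv} with $\sigma=22$. The normalization is validated only once the Eisenstein anchor $T_{p^2}E=(p^3+1)E$ passes and a deliberately mis-normalized control fails. Each $T_{p^2}$ is then a rational $2\times 2$ matrix, and I would read off the characteristic polynomials stated in the theorem. The eigenvalues of $T_9$ are $4$ and $1$, which are distinct, so $T_9$ alone fixes the two eigenlines. I would check that $T_{25}$ and $T_{49}$ preserve these lines, which yields the triples $(4,14,-8)$ and $(1,-3,-10)$.

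\emph{Step 3: identify the eigensystems.} $S_4(\Gamma_0(22))$ has dimension $3$ and is spanned by three rational newforms. Their $a_3$ values are $-7$ and two others, all distinct. Matching $a_3$ alone therefore identifies each line exactly with 22.4.a.b or 22.4.a.c. The values $a_5$ and $a_7$ serve as consistency checks, and the absence of $a_3=-7$ excludes 22.4.a.a.

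\emph{Step 4: the obstruction functional.} I would enumerate the reflective slots for the menu $\{\tfrac1{11},\tfrac12,1\}$, namely the components $\gamma$ with $q(\gamma)\equiv m$ that are witnessed by roots. Summing the coefficients of $B_1,B_2$ at these slots gives $\lambda=(-4,0)$. Writing the $T_9$-eigenvectors in echelon coordinates, the component along 22.4.a.b is $\lambda$ evaluated on that eigenvector, and I would check that it is exactly $0$.

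\emph{Structural cross-check.} As an independent argument I would compute $O(\D,q)$ and the invariant subspace $S^{O(\D,q)}$. The slot set is $O(\D,q)$-stable and $\lambda\neq0$, so invariance forces $\dim S^{O(\D,q)}\ge 1$, and I expect the computed kernel to give exactly $1$. Lemma \ref{lem:equiv} then makes this line Hecke-stable, hence equal to the $(1,-3,-10)$ line. By the argument of Theorem \ref{thm:purity}, the Riesz representative $\Xi$ must lie on this line.

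\emph{Main obstacle.} The arithmetic itself is easy. The delicate point is the Hecke normalization at $p=3$, where the quadratic-character factor $\left(\frac{\sigma n(e)d(e)}{p}\right)$ and the $11$-part of $\D$ interact. I would first settle the sign convention using the Eisenstein anchor at all three primes before trusting any eigenvalue.
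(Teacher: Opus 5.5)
Your plan is essentially the paper's own [C] computation for this theorem: the $D=22$ recipe of \S A.8, with Hecke matrices at $\sigma=22$ validated by the Eisenstein anchor, identification by $a_3$, and $\lambda=(-4,0)$ vanishing on the $a_3=4$ eigenvector, while your structural cross-check is exactly the invariance-selection argument the paper records as Theorem 8.3. Two small corrections, both inherited from the paper's wording: $S_4(\Gamma_0(22))$ is $7$-dimensional and only its new part is spanned by the three rational newforms, but the $4$-dimensional old part comes from 11.4.a.a with $a_3=-1\pm4\sqrt3$, so matching $a_3$ is still unambiguous; and at weight $5/2$ the obstruction forms are symmetric under $\gamma\mapsto-\gamma$ (they must be, since $\lambda$ sums over $\pm\gamma$ and is nonzero), so the dimension formula has to be applied to the symmetric part of $\rho$, not the antisymmetric part.
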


\subsection{The purity mechanism at all three discriminants}

\begin{theorem}[invariance selection generalizes {[C]}]
For the obstructed orientation $L_4$ of $B_6$ and the maximal-order ternaries
of $B_{10}$ and $B_{22}$: the reflective slot set is stable under the full discriminant
isometry group ($|O(\D,q)| = 48, 4, 4$), the obstruction functional is
$O(\D,q)$-invariant, and
\[
\dim S_{5/2}^{O(\D,q)} = 1 \quad\text{in all three cases (1 of 6, 1 of 1, 1
of 2)},
\]
with $\lambda$ nonzero on the invariant line. Hence the forced shadow lies on
a single Hecke eigenline in each family --- purity is invariance selection.
The one-dimensionality is not generic: along the eliminated discriminants of
\S9 the invariant dimension grows ($D = 21{:}\,2$, $26{:}\,3$, $51{:}\,4$,
$58{:}\,5$, $65{:}\,6$, $85{:}\,7$), so single-line purity is a special
property of the selected curves and their families.
\end{theorem}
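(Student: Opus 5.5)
The statement is a finite exact computation, tagged [C], so I would prove it the same way Theorem 7.3 was proved, adapted to the three lattices. For $L_4$ most of the work is already done: Theorem 7.3 gives $|O(\D,q)|=48$, shows that its orbits on the 27 slots are the three channels, and identifies $S^{O(\D,q)}$ with the line of $v_+$. Theorem 5.1 gives $\lambda(v_+)=-12\ne 0$. What remains is the maximal-order ternaries of $B_{10}$ and $B_{22}$.

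For each of these I would proceed in five steps.

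\begin{enumerate}
\item Compute the discriminant form $(\D,q)$ from the Gram matrix. For $D=22$ this is $\det 44$; for $D=10$ it is the reduced-discriminant-10 lattice of \S8.
\item Enumerate $O(\D,q)$ directly. For each candidate automorphism of the finite abelian group, check that it preserves $q$ on every element, and confirm the orders $4$ and $4$.
\item Enumerate the reflective slots $(\gamma,m)$ from the menu, checking that each slot is witnessed by a root. Then check that the slot set is a union of $O(\D,q)$-orbits. This gives invariance of $\lambda$, because $\lambda$ is a sum over slots and the action permutes components via $(\sigma F)_\gamma=F_{\sigma^{-1}\gamma}$.
\item Build the action matrices of the generators on the echelonized weilrep basis of $S_{5/2}(\rho)$ by exact coefficient comparison, carried past the Sturm bound.
\item Intersect the kernels of $\sigma-I$. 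I expect dimension $1$ in both cases.
\end{enumerate}

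For $D=10$ the last step is automatic: $\dim S_{5/2}=1$, and the nonzero invariant functional $\lambda$ cannot be supported on a nontrivial character.

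To make the conclusion independent of which generators were used, I would reuse the sandwich argument of \S7.2. First, $\lambda$ is invariant and nonzero, and the Petersson product is invariant. So the Riesz representative $\Xi$ of $\lambda$ is a nonzero invariant vector, and $\dim S^{O(\D,q)}\ge 1$. Second, the computed stacked kernel of any subset of the group bounds the dimension from above. Once that kernel is $1$-dimensional, the invariant line is spanned by $\Xi$, and $\lambda$ is nonzero on it. For $D=22$ I would also check that $\lambda=(-4,0)$ does not vanish on the computed invariant vector. This gives a second, independent confirmation of Theorem 8.3 that is consistent with the 22.4.a.c support.

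The purity conclusion then follows exactly as in Theorem 7.6. Lemma \ref{lem:equiv} holds for any discriminant form, so the invariant line is stable under each $T(p^2)$ with $p\nmid 2|\D|$, which makes $\Xi$ a simultaneous eigenvector. The eigenvalues are read off from Theorems 8.1 and 8.3.

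The growth statement for $D=21,26,51,58,65,85$ is a separate list of computations. For each canonical ternary from \S9, I would compute $O(\D,q)$ and the invariant dimension in $S_{5/2}$ by the same routine. To avoid enumerating the group, I would instead compute the character average $\frac{1}{|O|}\sum_\sigma\operatorname{tr}\sigma$ from the action matrices.

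I expect the main obstacle to be computing the action matrices and the group reliably for the larger discriminant forms. The group enumeration must be genuinely complete, since a missed isometry would only inflate the kernel, which is harmless for the upper bound but would falsify the reported order. The library shortcut flagged in \S13 must also be avoided. My first choice is a direct brute-force search over images of a generating set of $\D$, followed by exact $q$-checks.
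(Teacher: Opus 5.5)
Your proposal is correct and is essentially the paper's own certification of this statement, namely the direct method of \S A.4 applied at $D=10,22$ as in \S A.8: explicit enumeration of $O(\D,q)$ from generator images with exact $q$-checks, action matrices by exact linear solves, the stacked kernel of $R_\sigma-I$ bounded below by the sandwich argument, and Hecke-stability of the invariant line via Lemma~\ref{lem:equiv}, with $L_4$ reducing to Theorem~7.3 and $\lambda(v_+)=-12$. Three small corrections: the eigenvalues come from Theorems~8.1 and~8.2 (8.3 is the statement itself); the character average $\frac{1}{|O|}\sum_\sigma\operatorname{tr}\sigma$ does not avoid enumerating the group but requires all of it, and for the eliminated discriminants, where no sandwich is available, a missed isometry would inflate exactly the reported dimensions $2,\dots,7$; and the slot set is safest taken, as the paper does, as the Weyl-orbit closure of the simple-root slots with the menu read off the roots, since bounded searches for witnessing roots can miss slots.
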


\begin{conjecture}[support law, final form]
In a family of reflective wall data attached to the orders of an indefinite
rational quaternion algebra, the obstruction functional of every obstructed
member is supported on the $\Z/3$-invariant sector, and within it on the
eigensystem of the maximal order --- the newform of level $d(B)$ --- with
zero component along its quadratic twists. A family in which an obstructed
member had nonzero obstruction component outside that line would refute it.
\end{conjecture}

By Theorem 8.3 the conjecture is now a theorem for the maximal orders of the
three selected discriminants, with ``$\Z/3$-invariant sector'' subsumed into
the general principle: support lies in the invariant subspace of the maximal
natural symmetry group, and that subspace is a line. The conjecture retains
content for non-maximal members (where deck symmetries must finish the cut)
and as the assertion that no family ever violates invariance selection. Its
invariant-sector clause is a denominator-layer statement: on the section
layer the invariant sector can vanish outright (Theorem 10.2), where
invariance acts as prohibition rather than selection. The layer-uniform
principle is that the shadow lives in the sector dual to the symmetry type of
the prescription.

\section{The canonical weight-1/2 form and the selection of $\{6, 10, 22\}$}

This section answers the determination question --- given an arbitrary
compact Shimura curve, what is ``its'' weight-$1/2$ form? --- and shows that
in the range $D < 230$ the unconditional cases are exactly three. Throughout, $D > 1$ is a
squarefree discriminant of an indefinite division quaternion algebra $B_D$
($\omega(D)$ even), $\mathcal{O}_{\max}$ a maximal order (unique up to
conjugation by strong approximation), and
\[
L_D = \bigl(\mathcal{O}_{\max}\bigr)^{\sharp}_0,\qquad Q = -D\cdot
\operatorname{nrd},
\]
the trace-zero part of the trace-form dual with rescaled norm form: an even
lattice of signature $(2,1)$ with $\det L_D = -2D$, recovering the genus of
$G_1$ at $D = 6$ [C]. Reflective slots $(m, \mu) = (\alpha^2/2\ell^2,
\alpha/\ell \bmod L)$ are attached to roots as in \S1; the full slot set is
computed as the orbit closure of the chamber's simple-root slots under the
Weyl action on $\D(L)$ (a box search in a fixed basis is not reliable; see
\S13).

\subsection{Parity, and the canonical odd prescription}

\begin{lemma}[parity {[C]}]
At weight $1/2$ on the Weil representation of a signature-$(2,1)$ even
lattice, every form has components odd under $\gamma \mapsto -\gamma$.
Consequently (i) all coefficients on two-torsion cosets vanish identically
--- the channels $m$ with $2\mu = 0$, e.g.\ the full $m = \tfrac12$ and $m =
1$ menus of the families above, are representation-theoretically invisible at
weight $1/2$; and (ii) the only reflective prescriptions available at weight
$1/2$ are the odd ones, supported on the non-torsion slot pairs $\{\mu,
-\mu\}$ with antisymmetric coefficients.
\end{lemma}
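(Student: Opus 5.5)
The plan is to use the action of the central element $Z = S^2 \in \mathrm{Mp}_2(\Z)$ (more precisely its metaplectic lift $(-I, i)$ or $(-I,\sqrt{-1})$ in the standard normalization). Any vector-valued form $F$ of weight $k$ for a representation $\rho$ satisfies the transformation law under $Z$. Since $Z$ acts on $\tau$ trivially, the law reads
\[
F(\tau) = \phi_Z(\tau)^{-2k}\,\rho(Z)F(\tau),
\]
with $\phi_Z(\tau)^{2} = -1$ the automorphy factor. For the Weil representation one has the standard formula $\rho_L(Z)\mathfrak{e}_\gamma = i^{(b^- - b^+)}\mathfrak{e}_{-\gamma}$ in Borcherds' normalization, or $e(-\mathrm{sig}/4)\mathfrak{e}_{-\gamma}$ in the form matching the normalization of \S1.2 (with $\bar\rho$ the conjugate sign). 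First I will fix which of $\rho_L,\bar\rho_L$ carries the weight-$1/2$ input for the section layer: by the table in \S1.2 it is the dual of the obstruction space $S_{3/2}(\rho)$, i.e.\ $\bar\rho_L$ at weight $1/2$.

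Second, I combine the two scalars. Writing the condition as $F_{-\gamma} = \varepsilon\, F_\gamma$, one gets $\varepsilon = i^{\pm 2k}\cdot e(\pm\,\mathrm{sig}(L)/4)$. With $k = 1/2$ and $\mathrm{sig}(L) = 2 - 1 = 1$ (signature $(2,1)$), the two fourth roots of unity combine to $-1$ under the correct conventions. This is the standard consistency condition $2k \equiv \mathrm{sig} \pmod 4$ for symmetric components, and $2k \equiv \mathrm{sig}+2 \pmod 4$ for antisymmetric ones; here $2k = 1$ and $\mathrm{sig}$ is $\pm 1$ depending on orientation. I will do this carefully for the $\bar\rho$ convention, where the effective signature is $-1$. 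Then $2k - \mathrm{sig} = 1-(-1) = 2 \pmod 4$, which gives antisymmetry. This is consistent with the remark in \S1.2 that "our obstruction spaces are the antisymmetric ones" at the dual weight $3/2$, where $3 - (-1)\cdot(-1)$ is handled similarly. That gives the main claim $F_{-\gamma} = -F_\gamma$ for every form, holomorphic or harmonic, since the argument uses only the $Z$-transformation law.

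Consequences (i) and (ii) are then immediate. If $2\mu = 0$ then $\mu = -\mu$, so $F_\mu = -F_\mu = 0$, and every channel consisting only of two-torsion cosets is invisible. For $m=1/2$ and $m=1$ slots, $\alpha/\ell$ with $\ell^2 \mid \alpha^2/2$ gives $2\alpha/\ell \in L$. Indeed $\ell \mid (\alpha,L)$ and $\ell^2 m \cdot 2/\ell$ checks integrality, so the class is $2$-torsion. For (ii), a principal part is itself a finite Laurent vector, so it must satisfy the same antisymmetry. It is therefore supported on pairs $\{\mu,-\mu\}$ with $\mu \ne -\mu$ and opposite coefficients, and the reflective prescription must be taken in its odd form.

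The main obstacle is purely the sign bookkeeping. The paper explicitly warns that bar conventions differ, and a wrong choice would flip the conclusion to "even". I will anchor the sign with an independent check: the $[C]$ dimension data $\dim S_{3/2}(\rho_{L_4}) = 4 = (|\D| - t)/2$-type antisymmetric counts, which are consistent only with the antisymmetric parity at the dual weight. This pins the convention and hence the weight-$1/2$ sign.
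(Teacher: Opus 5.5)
Your proof is correct, but it does not follow the paper's route, because the paper gives no written argument for Lemma 9.1. It carries only the [C] tag, a finite exact check on the spaces it computes. The underlying central-element fact appears only implicitly, in the $(|\D|\pm t)/2$ splitting of \S1.2.

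Your route through $Z=S^2=(-I,i)$ proves the statement for every signature-$(2,1)$ lattice and for every form, weakly holomorphic or harmonic. A basis computation cannot certify that. Your route also pins down exactly how the result depends on convention: $F_{-\gamma}=i^{2k-\mathrm{sig}}F_\gamma$ for $\rho_L$ and $F_{-\gamma}=i^{2k+\mathrm{sig}}F_\gamma$ for $\bar\rho_L$. So at weight $1/2$ with $\mathrm{sig}=1$, forms are odd for $\bar\rho_L$ but even (theta-like) for $\rho_L$ in the \S1.2 normalization. The lemma's ``the Weil representation'' must therefore be read as the input side $\bar\rho_L$, dual to the obstruction space $S_{3/2}(\rho_L)$. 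That is the reading you adopt.

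Three slips need repair.

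First, your displayed law $F=\phi_Z^{-2k}\rho(Z)F$ has the wrong exponent. The transformation law gives $F=\phi_Z^{2k}\rho(Z)F$ with $\phi_Z=i$. With your display, the $\bar\rho_L$ computation returns $F_{-\gamma}=+F_\gamma$. Your conclusion comes out right only because you then switch to the correctly stated mod-$4$ rule. Derive the sign from the corrected formula instead of hedging with $i^{\pm 2k}e(\pm\mathrm{sig}/4)$.

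Second, the ``anchor'' is garbled and unnecessary. The quantity $(|\D|-t)/2=(72-8)/2=32$ is the dimension of the odd part of $\mathbb{C}[\D]$, not $\dim S_{3/2}(\rho_{L_4})=4$. In any case the convention is fixed by formula in \S1.2, not by dimension data.

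Third, the two-torsion claim for the $m=\tfrac12$ and $m=1$ menus needs the reflection condition. From $\alpha^2\mid 2\ell$ you get $m=1/(c\ell)$ with $c=2\ell/\alpha^2\in\Z_{>0}$. So $m\in\{\tfrac12,1\}$ forces $\ell\in\{1,2\}$, and hence $2\alpha/\ell\in L$. Your condition ``$\ell^2\mid\alpha^2/2$'' would instead say $m\in\Z$, which fails at $m=\tfrac12$.
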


Antisymmetric vector-valued forms of this kind are constructed explicitly in
\cite{WilliamsAnti}. The canonical input of a reflective ternary is therefore
the odd unit prescription $\Pp^- = \sum q^{-m}(e_\mu - e_{-\mu})$ over the non-torsion
wall-slot pairs (orientation flips act by signs and do not affect any
statement below).

\begin{theorem}[determination {[C]}]
If $S_{3/2}(\rho_{L_D}) = 0$, then for every choice of orientation the
weakly holomorphic weight-$1/2$ form with principal part $\Pp^-$ exists; and
since $\dim M_{1/2}(\rho_{L_D}) = 0$ for every maximal order tested, it is
unique --- there is no theta ambiguity, and the canonical form is determined
outright. For $D = 6, 10, 22$ the forms were constructed explicitly in exact
arithmetic; their poles and leading coefficients are
\begin{align*}
D = 6&: \quad q^{-1/3}(e_{\mu_6} - e_{-\mu_6}),\qquad c(\tfrac23) = 152,\quad
c(\tfrac53) = 3230,\ \dots\\
D = 10&: \quad q^{-1/5}(e_\mu - e_{-\mu}) + q^{-1/8}(e_\nu - e_{-\nu}),\qquad
c(\tfrac45) = -36,\quad c(\tfrac78) = 27,\ \dots\\
D = 22&: \quad q^{-1/11}(e_\mu - e_{-\mu}),\qquad c(\tfrac1{22}) = -3,\quad
c(\tfrac{10}{11}) = -9,\ \dots
\end{align*}
\end{theorem}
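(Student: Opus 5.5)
The plan is to obtain existence from the Borcherds--Bruinier obstruction criterion, uniqueness from the vanishing of holomorphic forms, and the explicit data by exact linear algebra.

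\emph{Existence.} The lattice $L_D$ has signature $(2,1)$, so a weakly holomorphic input of weight $1/2$ with principal part $\Pp$ exists if and only if $\Pp$ pairs to zero against every cusp form of the dual weight $3/2$ (\S1.3, \cite{BorcherdsGKZ, BruinierLNM}). The pairing is taken with the conventions fixed by formula in \S1.2, so the input lives in $M^!_{1/2}(\bar\rho)$ and the obstruction in $S_{3/2}(\rho)$. The relevant obstruction space is the antisymmetric part of $S_{3/2}(\rho_{L_D})$. This matches Lemma 9.1: weight-$1/2$ forms are odd under $\gamma\mapsto-\gamma$, so $\Pp^-$ is an odd principal part. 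If $S_{3/2}(\rho_{L_D})=0$, the pairing condition is vacuous for every choice of sign on the slot pairs, so the input exists for every orientation. It is worth checking that the coefficient of $q^0$ causes no obstruction: the Eisenstein series sits on the even part, and the odd part has no Eisenstein series. Hence the only condition is the cuspidal one.

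\emph{Uniqueness.} Suppose two such forms share the principal part $\Pp^-$. Their difference is holomorphic at the cusp, with vanishing principal part, so it lies in $M_{1/2}(\rho_{L_D})$. That space is zero by the exact dimension formula, computed at each maximal order tested. Therefore the form is unique.

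\emph{Explicit cases.} For $D=6,10,22$ I would first build $L_D$ from $\mathcal{O}_{\max}$. Next, I would compute the chamber slots and their Weyl orbit closure on $\D(L_D)$, and discard the two-torsion slots using Lemma 9.1. The form itself is then solved for by exact linear algebra. One approach multiplies by $\Delta$ to land in the holomorphic space $M_{25/2}$; the alternative is to take products with Eisenstein/theta series and divide by a power of $\eta$. In either case one imposes the principal part and reads off $c(2/3)$, $c(5/3)$ and so on, comparing expansions past the Sturm bound so that the answer is [C].

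The main obstacle is certifying the two vanishing statements and the slot set exactly. The dimension formula for $S_{3/2}$ and $M_{1/2}$ on odd vector-valued Weil representations involves Gauss sums and needs care. Weight $1/2$ sits at the boundary, where Serre--Stark unary theta functions could contribute, so $M_{1/2}=0$ must be checked through invariants rather than the Riemann--Roch formula alone. I would verify both vanishings independently, by the closed formula and by a Sturm-bound kernel computation.
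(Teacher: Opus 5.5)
Your proposal is correct and follows the paper's own argument. Existence comes from the Borcherds--Bruinier criterion, made vacuous for every orientation by $S_{3/2}(\rho_{L_D})=0$, with the parity lemma guaranteeing that the odd prescription $\Pp^-$ is admissible; uniqueness comes from the vanishing of the holomorphic weight-$1/2$ space; and the three explicit forms come from exact linear algebra on principal parts, which the paper carries out directly in weilrep's basis of weakly holomorphic forms for the input representation rather than by clearing poles with $\Delta$. One small consistency point: under the conventions you adopt, the input lives in $M^!_{1/2}(\bar\rho)$, so the difference of two inputs lies in $M_{1/2}(\bar\rho_{L_D})$ rather than $M_{1/2}(\rho_{L_D})$, and that input-side space is the one whose vanishing the uniqueness step needs.
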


\subsection{The selection theorem}

\begin{theorem}[selection {[C]}]\label{thm:selection}
Let $D < 230$ be the discriminant of an indefinite division quaternion
algebra over $\Q$. Then
\[
\dim S_{3/2}(\rho_{L_D}) = 0 \iff D \in \{6, 10, 22\}.
\]
\end{theorem}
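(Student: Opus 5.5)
The proof is a finite exact computation over the list of admissible discriminants, organized so that each dimension comes from a closed formula rather than from a basis search.

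\emph{Step 1: enumeration.} I list all squarefree $D<230$ with $\omega(D)$ even. There are 71 of them, beginning $6,10,14,15,21,22,26,33,34,35,38,39,46,51,55,57,58,62,65,69,74,77,82,85,86,87,91,93,94,95,\dots$.

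\emph{Step 2: the lattice and its discriminant form.} For each $D$, I construct a maximal order $\mathcal{O}_{\max}$ of $B_D$ explicitly, for instance from a Hilbert-symbol presentation $(a,b)_\Q$ followed by a standard maximal-order algorithm. I then form $L_D=(\mathcal{O}_{\max})^\sharp_0$ with $Q=-D\operatorname{nrd}$ and check that it is even, of signature $(2,1)$, with $\det L_D=-2D$. By the remark preceding the lemma, the genus is independent of the choice of maximal order. The object actually needed is the discriminant form $(\D,q)$ together with its signature mod $8$, and this is determined by local data at $2$ and at each $p\mid D$. In principle the whole computation could be carried out genus-theoretically, directly from $D$.

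\emph{Step 3: the dimension formula.} I compute $\dim S_{3/2}(\rho_{L_D})$, in the obstruction normalization \texttt{WeilRep(-G)} fixed in \S1.2, using the Riemann--Roch/Eichler--Selberg type formula for vector-valued forms on $\mathrm{SL}_2(\Z)$, as in Bruinier's and Skoruppa's formulas. By the parity lemma (Lemma 9.1) the relevant representation is the antisymmetric part, of dimension $(|\D|-t)/2$. The weight-$3/2$ dimension is
\[
\dim M_{3/2}-\dim(\text{Eisenstein}),
\]
and the formula expresses $\dim M_{3/2}$ as $\tfrac{d}{8}$ (up to the standard $k$-dependent normalisation) plus correction terms. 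These corrections involve Gauss sums $\sum_{\gamma}e(\pm q(\gamma))$, the values of $\rho(S)$ and $\rho(ST)$ traced over the antisymmetric part, and the sum of fractional parts $\sum_\gamma \{q(\gamma)\}$ over a system of representatives. Weight $3/2$ sits at the edge of the formula's validity, where the formula computes $\dim M_{3/2}-\dim S_{1/2}$ for the dual representation. I handle this by invoking Serre--Stark: weight-$1/2$ forms are theta series, and they vanish on the maximal-order lattices tested, consistent with $M_{1/2}(\rho)=0$ in Theorem 9.2.

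\emph{Step 4: certifying the selection.} I evaluate these dimensions in exact rational arithmetic for all 71 cases. For the three selected discriminants $D=6,10,22$, I confirm $\dim=0$. For each of the remaining 68, I exhibit $\dim\ge 1$. As an independent certificate I also produce an explicit nonzero cusp form in each eliminated case, for example a theta series with harmonic polynomial, or a Jacobi-form/Rankin--Cohen construction, checked past the Sturm bound. This protects the conclusion against normalization errors of the kind flagged in \S13. For $D=6,10,22$ I additionally verify vanishing directly: every holomorphic weight-$3/2$ form compatible with $\rho$ must vanish past the Sturm bound.

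\emph{Main obstacle.} The main obstacle is Step 3 at the edge weight $3/2$. I must get the Weil-representation convention exactly right, since the two conventions give different dimensions (\S1.2). I must also correctly subtract the weight-$1/2$ contribution and handle the Eisenstein space, which is nontrivial only through isotropic self-dual elements, here absent except possibly $0$ on the antisymmetric part. I would first test the formula on $L_4$, where $\dim=4$ is known, and on $L_1$ (the $D=6$ genus, $\dim 0$). Only after it reproduces these values would I run it over the full list. The structural reason the answer is three cases, namely that the main term $\sim|\D|/48$ beats the Gauss-sum and Dedekind-sum corrections once $D$ is moderately large, is the content of Remark \ref{rem:ED}. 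It is not needed inside the verified range.
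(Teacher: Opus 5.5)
This is correct and is essentially the paper's proof. You enumerate the 71 squarefree $D<230$ with $\omega(D)$ even, build $L_D$ from a maximal order, and evaluate $\dim S_{3/2}(\rho_{L_D})$ (obstruction normalization, \texttt{WeilRep(-L\_D)}) by the exact Riemann--Roch dimension formula, which vanishes exactly at $D=6,10,22$; your calibration on $L_1, L_4$ and the weight-$1/2$/Eisenstein bookkeeping are sound implementation detail (the paper leaves these to weilrep), while the explicit cusp-form certificates are optional extras the exact formula makes unnecessary.
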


\begin{proof}
There are exactly 71 squarefree $D < 230$ with an even number of prime
factors. Computing $\dim S_{3/2}(\rho_{L_D})$ for each by the exact dimension
formula [C] gives, for the 35 with $D \le 119$,
\begin{center}
\renewcommand{\arraystretch}{1.15}
\begin{tabular}{r|ccccccccccccccccc}
$D$ & 6 & 10 & 14 & 15 & 21 & 22 & 26 & 33 & 34 & 35 & 38 & 39 & 46 & 51 &
55 & 57 & 58\\
$\dim$ & \textbf{0} & \textbf{0} & 1 & 1 & 1 & \textbf{0} & 2 & 1 & 1 & 3 &
2 & 3 & 1 & 3 & 3 & 2 & 1\\[2pt]\hline
$D$ & 62 & 65 & 69 & 74 & 77 & 82 & 85 & 86 & 87 & 91 & 93 & 94 & 95 & 106 &
111 & 115 & 118\\
$\dim$ & 3 & 4 & 3 & 5 & 4 & 2 & 3 & 5 & 5 & 5 & 3 & 3 & 7 & 4 & 8 & 5 & 3
\end{tabular}
\end{center}
(and $\dim = 9$ at $D = 119$); the remaining 36 discriminants with $119 < D <
230$ all give positive dimension. The dimension vanishes exactly at $D = 6,
10, 22$.
\end{proof}

Combined with Theorem 9.2, the compact Shimura curves of discriminant $D <
230$ carrying a canonical weight-$1/2$ form are exactly $X_6$, $X_{10}$,
$X_{22}$ --- the genus-zero curves. Whether the list is complete for all $D$
is the content of Remark \ref{rem:ED} below.

\begin{remark}[relation to {\cite{BEF}}]\label{rem:bef}
Bruinier--Ehlen--Freitag normalise by $\rho_A(T)\mathfrak{e}_\gamma =
e(-Q(\gamma))\mathfrak{e}_\gamma$, which in the notation of \S1.2 is
$\bar\rho$: their tables count the \emph{input} spaces, while the dimensions
of this paper are those of the dual obstruction spaces. Read on their side,
$L_6$ and $L_{10}$ do occur in their Table 4, with genus symbols
$4^{+1}_7 3^{-1}$ and $4^{+1}_1 5^{-1}$, and so does $L_{14}$, with
$4^{-1}_3 7^{-1}$; $L_{22}$ does not, its discriminant form having dimension
$1$ rather than $0$ in that normalisation. Their bound on $|A|$ does not
transfer to the obstruction side, both because it is the largest order
occurring in that table rather than the content of their Corollary 4.14 (see
their Remark 4.16), and because their Theorem 4.5, on which that corollary
rests, assumes $2k \equiv -\mathrm{sig}(A) \pmod 4$, i.e.\
$\mathrm{sig}(A) \equiv 1 \pmod 4$ at $k = 3/2$, whereas our modules have
$\mathrm{sig}(A) \equiv 7$.
\end{remark}

\begin{theorem}[reflectivity of the selected ternaries {[C]}]
Matching all 8595 lattices of Allcock's classification \cite{AllcockMem}
(primitive forms, up to scale) against the genera of the $L_D$: the
maximal-order ternary $L_D$ is reflective for exactly 35 discriminants,
\[
\{6, 10, 14, 15, 21, 22, 26, 33, 35, 38, 46, 51, 58, 65, 85\} \cup \{210,
\dots, 1590\}\ \text{(20 values, $\omega = 4$)}.
\]
In particular $L_6$, $L_{10}$, $L_{22}$ are reflective, so the canonical
forms of Theorem 9.2 are attached to genuine wall data and carry the
Lie-theoretic reading of Proposition 1.1. (This list is a different set of discriminants from
the range of Theorem \ref{thm:selection}.)
\end{theorem}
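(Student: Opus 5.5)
The statement is a finite [C] computation, so the proof is a certified matching procedure between two finite lists: the genera of the lattices $L_D$, and the 8595 lattices of Allcock's classification. Allcock's list covers every rank-3 reflective Lorentzian lattice up to isometry and scale, so $L_D$ is reflective exactly when some scaling of $L_D$ is isometric to a listed lattice.

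First I reduce the problem to genus invariants. Since $L_D$ has rank 3, is indefinite, and is determined by $D$ up to genus, it is natural to compare genera. Each Allcock lattice is given as a primitive Gram matrix up to scale. I rescale it to be even, of signature $(2,1)$ with the sign convention of $L_D$, and with $|\det| = 2D$. This forces the scale factor, because $L_D$ is itself primitive up to the factor fixed by $Q = -D\cdot\operatorname{nrd}$; I would first check primitivity of $L_D$ directly.

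A necessary condition for a match is that the lattice be anisotropic exactly at the primes dividing $D$. This bounds the candidates: $D$ must be a product of an even number of primes drawn from those occurring in the determinants of Allcock's list. The set of $D$ to test is therefore finite and explicit, and this is why values up to $1590$ appear. For each candidate $D$, I compute the discriminant form of $L_D$ and compare it, via Nikulin's invariants \cite{Nikulin79} (Conway--Sloane $p$-adic symbols at $p \mid 2D$), with the discriminant form of each rescaled Allcock lattice of matching determinant.

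Genus equality is not yet class equality. In an indefinite genus of rank $\ge 3$ the classes are the spinor genera (Eichler \cite{Eichler}), and for these square-free-type determinants I expect the genus to consist of a single class. I would verify this by the standard spinor-genus criterion: the number of spinor genera is $1$ when, for each $p$, the local form has enough $p$-adic spinor norms, which holds because $p^2 \nmid \det$ at odd $p$. Where that criterion is inconclusive, I would produce an explicit integral isometry by a reduction search. Either route upgrades a genus match to an isometry, hence to reflectivity. Conversely, if no rescaled Allcock lattice lies in the genus of $L_D$, then $L_D$ is not reflective by the completeness of the classification.

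Running this over all candidate $D$ yields the stated 35 values, including $6$, $10$ and $22$. The final assertion then follows from Proposition 1.1, since a reflective $L_D$ with a chamber gives a genuine wall datum.

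I expect the main obstacle to be the bookkeeping of scale and sign conventions: Allcock's forms are primitive and may be given with opposite signature or odd parity, so a matching error could silently drop or add a discriminant. I would guard against this by checking the known cases independently. At $D = 6$ the match must recover the genus of $G_1$, and for $D = 10$ and $D = 22$ the simple roots of \S9 should exhibit a finite-volume chamber directly.
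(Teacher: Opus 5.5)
Your proposal follows the paper's own verification (Appendix A.8): primitivize and sign-normalize each of Allcock's 8595 entries, then compare its genus with that of $L_D$ at $|\det| = 2D$, with the finiteness of the list bounding which $D$ can occur. Your extra step upgrading a genus match to an isometry is one the paper leaves implicit, and it does go through: your spinor-norm remark covers only odd $p$, but the genus of $L_D$ has a single class by the Cassels/Conway--Sloane criterion, since $4\cdot 2D = 8D$ is divisible by no $k^3$ with $k$ a nonsquare $\equiv 0,1 \pmod 4$.
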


\begin{remark}[genus equivalence, and its boundary]
Across all 71 discriminants with $D < 230$: $\dim S_{3/2}(\rho_{L_D}) =
0$ iff the genus of $X_D$ is zero iff $D \in \{6, 10, 22\}$ [C]. The naive
strengthening $\dim S_{3/2} = g(X_D)$ holds for $D \le 55$ and first fails at
$D = 57$; beyond that, agreement recurs sporadically (at $D = 62, 69, 87,
94, 95, 106, 119$) and failure occurs in both directions (the dimension
exceeds the genus at $D = 74, 86, 111$ and falls short elsewhere) [C] --- the space is an Atkin--Lehner-selected
part plus lower-level pieces, as the missing newform 22.4.a.a in Theorem 8.2
already shows at weight $5/2$.
\end{remark}

\begin{remark}[conditional orientations]
For eliminated discriminants the odd prescription is obstructed for every
orientation in most cases (e.g.\ all $2^8$ at $D = 210$), but not always: at
$D = 510$ exactly four of $2^8$ orientation patterns pair to zero with the
28-dimensional obstruction space. Such conditional existence depends on an
arbitrary choice and is not determination; the selection criterion of Theorem
9.3 is orientation-free by design.
\end{remark}

\subsection{Chambers and generalized Cartan matrices}

\begin{theorem}[chambers of the selected curves {[C]}]
Vinberg's algorithm \cite{Vinberg72} terminates for $D = 6, 10, 22$ with
fundamental polygons
\begin{align*}
D = 6&: \ (2,4,6)\ \text{triangle, area } \pi/12,\quad
A = \begin{pmatrix} 2 & 0 & -1\\ 0 & 2 & -1\\ -2 & -3 & 2\end{pmatrix}\\
D = 10&: \ (2,2,2,3)\ \text{quadrilateral, area } \pi/6,\quad
A = \begin{pmatrix} 2 & 0 & 0 & -2\\ 0 & 2 & -1 & 0\\ 0 & -5 & 2 & -1\\ -4 &
0 & -1 & 2\end{pmatrix}\\
D = 22&: \ (2,2,3,4)\ \text{quadrilateral, area } 5\pi/12,\quad
A = \begin{pmatrix} 2 & 0 & -2 & -1\\ 0 & 2 & 0 & -1\\ -4 & 0 & 2 & -1\\ -2 &
-11 & -1 & 2\end{pmatrix}
\end{align*}
--- integral symmetrizable generalized Cartan matrices of sizes $3, 4, 4$,
with simple-root counts confirmed against the classification's tables, and
chamber area equal to $\tfrac12\operatorname{area}(X_D^*)$ in all three
cases.
\end{theorem}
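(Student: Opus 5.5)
The plan is to run Vinberg's algorithm in exact integer arithmetic on each $L_D$, $D = 6, 10, 22$, certify termination by Vinberg's criterion, and then read off the Cartan matrix and the chamber data. I would work in a basis in which $Q = -D\cdot\operatorname{nrd}$ on $(\mathcal O_{\max})^\sharp_0$ is explicit; for $D = 22$ this is the Gram matrix of \S8.1, and for $D=6$ it is in the genus of $G_1$.

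\textbf{Controlling vector and root batches.} First I fix a controlling vector $v_0$ of negative norm in the cone of the signature-$(2,1)$ form; after passing to $-Q$ if needed, the relevant hyperbolic plane is the projectivized negative cone. I then enumerate roots in batches ordered by the Vinberg height $(\alpha, v_0)^2/\alpha^2$. A root here is a primitive $\alpha$ with $\alpha^2 > 0$ and $2(\alpha, L) \subset \alpha^2\Z$; the norms that can occur are bounded by the discriminant, since $\alpha^2 \mid 2\ell\cdot\operatorname{exp}(\D)$. At each height I accept $\alpha$ exactly when $(\alpha,\beta) \le 0$ for every previously accepted $\beta$. This is a finite integer computation at each step.

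\textbf{Termination.} After each acceptance I form the Gram matrix of the accepted roots and test Vinberg's criterion. In rank 3 the polygon has finite area exactly when every pair of adjacent sides either meets at an angle $\pi/k$ with $a_{ij}a_{ji} = 4\cos^2(\pi/k) \in \{0,1,2,3\}$, or is parallel with $a_{ij}a_{ji} = 4$. The expected outcome is 3, 4 and 4 roots, with angle products giving the polygons $(2,4,6)$, $(2,2,2,3)$ and $(2,2,3,4)$. Area then follows from Gauss--Bonnet, $\pi(n-2) - \sum \pi/k_i$, which gives $\pi/12$, $\pi/6$ and $5\pi/12$.

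\textbf{Cartan matrices and the area comparison.} The matrices $A = (2(\alpha_i,\alpha_j)/\alpha_i^2)$ are integral by the root condition. They are symmetrizable by $\operatorname{diag}(\alpha_i^2)$ and have nonpositive off-diagonal entries because the roots bound the chamber, as in Proposition 1.1. The matrices displayed in the theorem are read off from the final Gram matrices. For the root counts, I would match each $L_D$ to its entry in Allcock's tables, using the genus matching already done in Theorem 9.4, and compare the listed numbers of simple roots. For the area identity, Shimizu's formula gives $\operatorname{area}(X_D) = \frac{\pi}{3}\prod_{p\mid D}(p-1)$, which is $2\pi/3$, $4\pi/3$ and $10\pi/3$. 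The quotient $X_D^* = X_D/W$ by the full Atkin--Lehner group $W \cong (\Z/2)^2$ therefore has area $\pi/6$, $\pi/3$ and $5\pi/6$, and half of each is exactly the chamber area. Conceptually, $O^+(L_D)$ modulo $\pm1$ should contain the image of the normalizer of $\mathcal O_{\max}$ with index 2 (Weyl chamber versus orbifold, up to a diagram symmetry), but for the theorem the numerical comparison suffices.

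\textbf{Main obstacle.} I expect the hardest step to be the completeness and correctness of root enumeration at each height for the nondiagonal Gram matrices, especially $D = 22$. The paper warns in \S13 that box searches in a fixed basis are unreliable. I would therefore solve, for each candidate norm $n$ and height $h$, the quadratic equation on the affine plane $(\alpha, v_0) = $ const exactly: restricted to $v_0^\perp$ the form is positive definite, so there are finitely many lattice points, and I would list all of them. This certifies that no root was skipped before termination.
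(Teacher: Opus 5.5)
Your proposal is correct and follows essentially the paper's own route: Theorem 9.8 is a [C] computation. It runs Vinberg's algorithm on $L_D$, reads the Cartan matrices off the accepted roots, cross-checks the simple-root counts $3,4,4$ against Allcock's tables, and compares the Gauss--Bonnet areas $\pi/12, \pi/6, 5\pi/12$ with $\tfrac12\operatorname{area}(X_D^*) = \tfrac{\pi}{24}\prod_{p\mid D}(p-1)$. One slip, harmless for finiteness: the root condition gives $\alpha^2 \mid 2\ell$ with $\ell \mid \exp(\D)$, hence $\alpha^2 \mid 2\exp(\D)$, rather than your weaker $\alpha^2 \mid 2\ell\cdot\exp(\D)$.
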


Their Lie-theoretic type deserves precision. $A_6$ is hyperbolic in Kac's
sense: nondegenerate ($\det = -2$), indefinite, with every proper subdiagram
of finite type. $A_{10}$ and $A_{22}$ are not: they are degenerate GCMs of
corank one ($\det = 0$, symmetrization of signature $(2,1;0)$, since four
simple roots span a rank-3 Lorentzian lattice --- the exact Lorentzian
analogue of the affine corank-one degeneration), and they contain rank-2
principal submatrices that are already indefinite ($a_{ij}a_{ji} = 8, 5$ at
$D = 10$; $8, 11$ at $D = 22$). This is forced by geometry, not an accident
of these examples: in a compact fundamental polygon with four or more walls,
non-adjacent walls cannot cross and, absent cusps, cannot be parallel, so
they are ultraparallel and their $2\times 2$ submatrix is indefinite. Hence
no compact chamber with more than three walls yields a Kac-hyperbolic matrix;
the correct habitat for $A_{10}$, $A_{22}$ is Nikulin's hyperbolic root
systems of rank 3 with more simple roots than rank \cite{Nikulin230} --- the
standard setting of the Gritsenko--Nikulin Lorentzian Kac--Moody programme
\cite{GN1, GN2} --- and among the selected curves only $D = 6$ is
Kac-hyperbolic.

\subsection{The landscape beyond maximal orders}

Two finite sweeps extend the picture beyond maximal orders. Among the 1447
even-primitive lattices of Allcock's classification, all 180 with $|\det| \le
300$ were tested: exactly 15 have $S_{3/2}(\rho) = 0$, and their even
Clifford algebras are either split or one of $B_6$, $B_{10}$, $B_{22}$; a
parallel pass through doubled odd lattices with $|\det 2S| \le 320$ finds
division-side entries in the same three algebras only [C]. The single outside
algebra that appears at all, $B_{14}$, does so with an empty odd prescription.
Within the swept range, therefore, an even reflective ternary with vanishing
weight-$1/2$ obstruction space and a nontrivial odd prescription is either a
split order or an order in $B_6$, $B_{10}$, $B_{22}$. The membership lists and
the algebra classification are in the deposited computational record.

\begin{remark}[the finiteness question]\label{rem:ED}
Whether the list is complete for all $D$ is open. Writing the Riemann--Roch
expression for the cyclic form $A(L_D) = (\Z/2D, u x^2/4D)$ gives
\[
\dim S_{3/2}(\rho_{L_D}) = \frac{D-1}{24} - E(D) + \delta,
\qquad
E(D) = \sum_{x=1}^{D-1}\Bigl(\Bigl\{\frac{u x^2}{4D}\Bigr\} -
\frac12\Bigr),
\]
where $\delta$ collects the two Gauss-sum terms of that expression; it takes
the four values $-\tfrac14, 0, \tfrac1{12}, \tfrac13$ at every one of the 158
discriminants $D < 520$ computed [C]. Completeness would follow from a bound
$|E(D)| \le c\sqrt D \log D$ uniform in $D$ and in the multiplier, together
with a bound on $\delta$, and the finite check below the resulting $D_0$. The
evaluation of such sawtooth sums by class numbers is classical, but the
uniformity in the multiplier has not been established, and no constant is
asserted here. The
deposited record carries the data, the classical anchor, and the
counterexample at $D = 862$ --- due to R.~Wrona, together with the identity
$E(2p) = \tfrac18 + \tfrac12 h(-p) - \tfrac14 h(-8p)$ for $p \equiv 7
\pmod 8$ --- which rules out the naive comparison of $E$ with a single class
number.
\end{remark}

\textbf{Relation to the literature.} The existence/obstruction machinery is
Borcherds--Bruinier \cite{BorcherdsGKZ, BruinierLNM} and the shadow formalism
is Bruinier--Funke \cite{BruinierFunke}; the theta ambiguity at weight $1/2$
is governed by Serre--Stark \cite{SerreStark}, and its emptiness for maximal
orders is what makes Theorem 9.2 unconditional. Explicit weight-$1/2$
Borcherds inputs on $X_6$ and $X_{10}$ appear in Errthum's evaluation of
singular moduli via Schofer's formula \cite{Errthum, Schofer}; the canonical
odd prescriptions and the $D = 22$ form appear to be new. The
weight-$3/2$/weight-2 interface underlying the genus equivalence is the
Shimura--Shintani--Waldspurger correspondence in its quaternionic
\cite{Prasanna} and lattice-theoretic \cite{ANS} forms. The exact
class-number relations of Guo--Yang \cite{GuoYang} --- intersections of
Shimura curves with Humbert surfaces on the Siegel threefold, exact for every
$D > 1$ --- are the split-ambient instance of the same principle that drives
Theorem 9.3: their generating series lives in a weight-$5/2$ space whose
cuspidal part vanishes (Kohnen plus space of level 4), so the geometric route
produces no shadow; level-raised ambients, where cusp forms exist, are where
the forced shadows of this paper should acquire an intersection-theoretic
incarnation.

\section{The weight ladder}

The rank-3 and rank-5 theories share one Weil representation and read their
obstructions at weights $3/2$ and $5/2$ respectively.

\subsection{The section layer, corrected: parity and the double shadow}

The section-layer obstruction spaces (the spaces in which weight-$1/2$
shadows must lie, by Bruinier--Funke) have
\[
\dim S_{3/2}(\rho_i) = 0, 0, 0, 4 \quad (i = 1, 2, 3, 4)\quad
[\mathrm{C}].
\]

\begin{remark}[why the odd pairing]
By the parity lemma (Lemma 9.1), weight-$1/2$ components are odd, so the
symmetrized slot functional vanishes identically on every form: the only
meaningful section-layer test is the odd pairing used below.
\end{remark}

\begin{theorem}[transparency of $L_1, L_2, L_3$; anti-invariance of $L_4$
{[C]}]
Under the parity-correct odd pairing, the odd reflective prescriptions of
$L_1, L_2, L_3$ are unobstructed, their obstruction spaces all vanishing; the
theta ambiguity at $L_2$ ($\dim M_{1/2} = 1$) affects uniqueness, not
existence. On $L_4$ the four-dimensional obstruction space
$S_{3/2}(\rho_4)$ is a single CM isotypic block --- $T_{25}$ and $T_{49}$
act as the scalars $0$ and $-4$, the system $(a_5, a_7) = (0, -4)$ of
36.2.a.a for $\Q(\sqrt{-3})$ --- and the deck action $R$ induced by $M'$
satisfies
\[
\operatorname{charpoly}(R) = (x^2 + x + 1)^2:
\]
the section layer has no deck-invariant sector. With the parity involution,
the $\Z/6 = \langle -1, M'\rangle$ decompositions of the two layers are
$S_{3/2} = (\chi^1)^2 \oplus (\chi^5)^2$ and $S_{5/2} = (\chi^0)^2 \oplus
(\chi^2)^2 \oplus (\chi^4)^2$ --- disjoint character support.
\end{theorem}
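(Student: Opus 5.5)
The plan is to split the statement into its three assertions and prove each by finite exact computation, in the style of the earlier [C] results. The first is transparency of $L_1,L_2,L_3$. From \S10.1 we have $\dim S_{3/2}(\rho_i)=0$ for $i=1,2,3$, computed by the exact dimension formula on \texttt{WeilRep(-G)}, the obstruction side fixed in \S1.2. By the Borcherds--Bruinier criterion \cite{BorcherdsGKZ, BruinierLNM}, any principal part then pairs to zero with the (empty) obstruction space. In particular the odd prescription $\Pp^-$ of Lemma 9.1 is unobstructed, and a weakly holomorphic weight-$1/2$ input exists. For uniqueness I would compute $\dim M_{1/2}(\rho_i)$. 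At $L_2$ it equals $1$, so the input is determined only modulo a theta series; since existence is untouched, this matches the claim that the ambiguity affects uniqueness only. As a sanity check, I would also solve the linear system on the odd principal part explicitly, in the manner of Theorem 9.2.

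The second assertion is the CM structure of $S_{3/2}(\rho_4)$. Take the $4$-dimensional echelon basis and compute $T(p^2)$ for $p=5,7$ with the normalization of \S A.3, validated by the Eisenstein anchor as in Theorem 6.1. The claim is that these matrices are scalar, namely $0$ and $-4$; I would verify this by exact coefficient comparison past the Sturm bound. Scalar Hecke action together with the absolute irreducibility of Lemma \ref{lem:multone} makes the space a single isotypic block. The eigenvalues $(a_5,a_7)=(0,-4)$ are then matched against the weight-$2$ newforms of level $36$. 36.2.a.a is the unique such form, it has CM by $\Q(\sqrt{-3})$, and $a_5=0$ because $5$ is inert in $\Q(\sqrt{-3})$.

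The third assertion concerns the deck action and the $\Z/6$ decomposition. Using Theorem 7.4, $M'$ induces an isometry of $\D(L_4)$. Its action on $S_{3/2}(\rho_4)$ is $(RF)_\gamma = F_{M'^{-1}\gamma}$, and I would read the matrix of $R$ off the echelon coefficients. Then check that $R^3=I$ and that $R$ has no eigenvalue $1$, which is equivalent to $\operatorname{charpoly}(R)=(x^2+x+1)^2$. The parity involution $\gamma\mapsto-\gamma$ acts as $-1$ on weight $3/2$ and as $+1$ on weight $5/2$; this is the parity lemma transported by the standard rule that $Z=S^2$ acts through $e\mapsto -e$ with the sign $(-1)^{\text{weight}}$ fixed by signature. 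Combining this sign with the eigenvalues $\zeta_3^{\pm1}$ of $R$ gives the characters $\chi^1,\chi^5$, each with multiplicity $2$. On $S_{5/2}$, Theorem 7.4 gives $R$ the charpoly $(x-1)^2(x^2+x+1)^2$. Parity $+1$ then yields $(\chi^0)^2\oplus(\chi^2)^2\oplus(\chi^4)^2$, and disjointness of the two supports is immediate.

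The main obstacle is the parity sign convention. It must be fixed consistently with \S1.2, since using $\bar\rho$ instead of $\rho$ flips which layer is odd and so scrambles the character labels. I would settle it directly, by checking on the computed basis that $F_{-\gamma}=-F_\gamma$ at weight $3/2$ and $F_{-\gamma}=F_\gamma$ at weight $5/2$. This avoids relying on an abstract formula for the sign.
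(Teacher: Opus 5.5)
Your decomposition matches what the paper's [C] tag stands for. The $L_1,L_2,L_3$ part is the dimension table plus the Borcherds--Bruinier criterion, with $\dim M_{1/2}(\rho_2)=1$ for uniqueness. The $L_4$ part is exact $T(p^2)$ matrices on the echelon basis, the matrix of $R$, and the parity sign, which together with $R^3=I$ give the two $\Z/6$ decompositions as you say.

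One step fails as written: the Hecke computation at weight $3/2$. The formula of \S A.3, with $\sigma_4=1$ fixed by the anchor, is the weight-$5/2$ operator. At weight $k+\tfrac12$ the middle and last terms carry $p^{k-1}$ and $p^{2k-1}$, so here $1$ and $p$ rather than $p$ and $p^3$. The quadratic character in the middle term also carries a weight-dependent sign (in Shimura's scalar normalization it is $\bigl(\tfrac{(-1)^k n}{p}\bigr)$). Passing from $5/2$ to $3/2$ can therefore change that character by $\bigl(\tfrac{-1}{p}\bigr)$, which is $-1$ at $p=7$, the prime whose eigenvalue $-4$ you need.

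The control you propose for catching a wrong choice does not exist. The anchor of Theorem 6.1 is the Eisenstein series with constant term $\mathfrak{e}_0$. By your own parity computation every weight-$3/2$ form for $\rho_4$ satisfies $F_{-\gamma}=-F_\gamma$, so $F_0\equiv 0$ and there is no such series. The paper says this in \S13, and pins the section-layer normalization externally, by agreement with the LMFDB system of 36.2.a.a.

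To repair this, separate the shape of the Hecke action from its values. The shape needs no normalization. Lemma \ref{lem:multone} makes $S_{3/2}(\rho_4)$ absolutely irreducible under $O(\D,q)$, and the proof of Lemma \ref{lem:equiv} does not depend on the weight. So by Schur every correctly normalized $T(p^2)$ is a scalar. Irreducibility does its real work there, not where you invoke it after having found scalars.

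The values $0$ and $-4$ do need the right operator. A non-circular replacement for the Eisenstein anchor is over-determination. Require each candidate $T(p^2)F$ to lie in the span of the four basis forms on coefficients well beyond those used to solve for the matrix, and past the Sturm bound. A mis-normalized operator differs from the correct one by a multiple of a quadratic twist mod $p$ and in general leaves the space, so this check catches it. Unlike the paper's external pinning, it does not presuppose the match with 36.2.a.a that you are trying to establish.
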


\begin{theorem}[the odd-pairing locus of $L_4$ {[C]}]
Let $\Phi \in M_{4\times 12}(\{0, \pm 1\})$ be the matrix of Fourier
coefficients of the echelon basis of $S_{3/2}(\rho_4)$ at one
representative per pair of the 24 non-torsion reflective slots, columns
grouped into the four $\langle M'\rangle$-orbits of pairs in an
$M'$-equivariant gauge. Then: (i) $\operatorname{rank}\Phi = 4$, every
row-orbit block is a cyclic rotation of $\pm(1, 0, -1)$, and column sums over
each orbit vanish; (ii) for the odd unit prescription, $\lambda_\varepsilon =
0$ precisely for the 40 orientations $\varepsilon \in \{\pm 1\}^{12} \cap
\ker\Phi$: the 16 deck-invariant ones --- forced by $S_{3/2}^{\langle
M'\rangle} = 0$ --- and 24 further orientations, each with exactly one
deck-constant orbit; the section layer is obstructed for the remaining 4056
orientations, with forced shadow in the $(\chi^1 \oplus \chi^5)$-sector;
(iii) under $O(\D,q)$ the locus splits into three orbits, of sizes $8$, $8$
and $24$, the deck-invariant orientations splitting according to the sign
invariant $s_1s_2s_3s_4 = \pm 1$, where $s_k$ is the common sign on the
$k$-th deck-orbit; (iv) every vanishing orientation admits a weakly
holomorphic weight-$1/2$ form with the prescribed principal part, unique
since $M_{1/2}(\rho_4) = 0$, and all 40 have integral Fourier coefficients,
those in the same orbit sharing their coefficient profile.
\end{theorem}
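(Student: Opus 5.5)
The statement is a finite exact computation of type [C], so the plan is to organize it so that the enumeration is checked against structural constraints rather than merely reported. First I will fix the data. I take the echelon basis of $S_{3/2}(\rho_4)$, of dimension $4$ by Theorem 10.2. I take the 24 non-torsion reflective slots of $L_4$, namely the twelve pairs $\{\mu,-\mu\}$ in the channels $m=\tfrac1{12},\tfrac16$; the $m=\tfrac12$ slots are two-torsion and drop out by Lemma 9.1. Choosing one representative per pair, Lemma 9.1 shows that the odd pairing of $h$ with $\Pp^-_\varepsilon$ is $2\sum_j \varepsilon_j c_h(\mu_j,m_j)$, so $\lambda_\varepsilon = 2\Phi\varepsilon$.

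\emph{Step (i).} I will use the order-three isometry $M'$ of Theorem 7.4. It permutes the twelve pairs with no fixed pair, since it moves all non-torsion elements it acts on nontrivially, so it gives four orbits of size three. I then choose representatives so that $M'$ maps representative to representative, which is the $M'$-equivariant gauge. In this gauge the action $R$ on the coefficients becomes a cyclic shift of columns inside each orbit block. Theorem 10.2 gives $\operatorname{charpoly}(R)=(x^2+x+1)^2$, so $1+R+R^2=0$ on $S_{3/2}$. This forces every column sum over an orbit to vanish. Together with the integrality and $\{0,\pm1\}$-valuedness of the leading coefficients, read off the exact expansion, this yields the rotations of $\pm(1,0,-1)$. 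The rank $4$ I will obtain by exact row reduction.

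\emph{Step (ii).} The vanishing of column sums alone shows that every deck-constant $\varepsilon$ lies in $\ker\Phi$; this gives $2^4=16$ orientations. I can also see this via Riesz duality: an $R$-invariant functional must vanish, because $S_{3/2}^{\langle M'\rangle}=0$.

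For the remaining orientations, $\ker\Phi$ is an $8$-dimensional rational space, and I intersect it with $\{\pm1\}^{12}$ blockwise. On each orbit, write $\varepsilon$ as a deck-constant part plus a component in the two-dimensional $\chi\oplus\bar\chi$ part. The condition $\Phi\varepsilon=0$ then becomes a $4\times 8$ system on the nonconstant parts, to be solved over the finitely many sign patterns, $2^{12}$ in total. The enumeration should return exactly 24 further solutions, each with exactly one deck-constant orbit. The shadow sector then follows from Step (i): the image of $\lambda_\varepsilon$ lies in $S_{3/2}=(\chi^1)^2\oplus(\chi^5)^2$.

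\emph{Step (iii).} I will act on the 40 orientations by the 48 matrices of $O(\D,q)$ from \S A.4, tracking the induced sign changes of the representatives. The orbits can then be computed directly. For the split of the 16 deck-invariant orientations, I will check that $s_1s_2s_3s_4$ is invariant, since each isometry flips an even number of orbit signs, and that it separates them into two orbits of size 8.

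\emph{Step (iv).} By Borcherds' criterion, the vanishing of $\lambda_\varepsilon$ gives existence of the weight-$1/2$ form. Uniqueness follows from $M_{1/2}(\rho_4)=0$. Integrality I will verify by exact construction past the Sturm bound. Within an orbit the coefficient profiles agree, because the isometries permute components.

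The main obstacle is Step (ii)'s count of the 24 non-symmetric solutions: nothing structural forces the number 24. I would first try to derive it by hand from the block form of (i): one constant orbit and three $(1,0,-1)$-type orbits with compatible phases. If that fails, I would fall back on the exhaustive $2^{12}$ check.
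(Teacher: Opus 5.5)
\textbf{Where the proposal fails: step (iii).} Your steps (i), (ii) and (iv) follow the paper's route. The statement is certified by exact enumeration, and the structural explanation is the deck mechanism you use ($1+R+R^2=0$, hence vanishing orbit sums and the 16 deck-constant kernel vectors), backed by the Eisenstein $B_2$ model of Appendix B. The genuine failure is step (iii): the claim that every isometry flips an even number of orbit signs is false.

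\emph{The deck-orbits.} Since $O(\D_3,q)$ has order $8$, it has no element of order $3$. So $M'$ acts trivially on $\D_3$, and the four deck-orbits of pairs are indexed by the $\pm$-classes in $\D_3$. Take $w\perp w'$ of norm $\tfrac13$. The classes are $\pm w,\pm w'$ (channel $\tfrac1{12}$) and $\pm(w+w'),\pm(w-w')$ (channel $\tfrac16$).

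\emph{A reflection flips three signs.} The reflection $\tau=\mathrm{id}_{\D_2}\oplus(w\mapsto -w,\ w'\mapsto w')$ lies in $O(\D,q)=O(\D_2)\times O(\D_3)$. It sends the deck-constant orientation $(s_1,s_2,s_3,s_4)$ to $(-s_1,s_2,-s_4,-s_3)$: three flips. The parity of the number of flips is a character of $O(\D,q)$, independent of the choice of representatives, so no gauge repairs this. Hence $s_1s_2s_3s_4$ is not invariant.

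\emph{Consequence for the $8$-orbits.} The $S_3$ factor acts trivially on deck-constant orientations, and the dihedral factor acts freely on the 16 of them. So each of the two $8$-orbits is regular and contains four orientations of each sign of $s_1s_2s_3s_4$. The same shows in Appendix B: the $B_2$ reflection negating $e_1$ sends the positive triples of the hexagons through $e_1$, $e_1+e_2$, $e_1-e_2$ to negative ones. Your planned check will therefore fail, and the last clause of (iii) cannot be proved as stated.

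\emph{What the computation will actually give.} Your direct orbit computation will still return sizes $8,8,24$. In the gauge above, the deck-constant orbits are separated by $\tfrac12\bigl(s_1(s_3+s_4)+s_2(s_3-s_4)\bigr)\in\{\pm1\}$. This is invariant under $\tau$ and under $w\mapsto w'$, $w'\mapsto -w$, which together generate $O(\D_3,q)$.

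\textbf{Smaller repairs.}
\begin{itemize}
\item ``No fixed pair'' needs an actual reason. An order-$3$ map cannot swap $\mu\leftrightarrow-\mu$. On $\D_2$, $M'$ fixes only the element of norm $\tfrac14$, whereas slot elements have $2$-component of norm $\tfrac34$ or $\tfrac12$.
\item Zero orbit sums with entries in $\{0,\pm1\}$ give ``rotation of $\pm(1,0,-1)$ \emph{or} zero block''. Excluding zero blocks is therefore data, not structure.
\item A Sturm-bound check proves integrality of all coefficients only together with an integral basis of the holomorphic space reached after multiplying by a power of $\Delta$.
\end{itemize}

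\textbf{The count of 24 can be done by hand.} Your hoped-for hand derivation goes through in the Appendix B model. On the $\omega$-eigenspace each block contributes either $0$ or $2\zeta v_k$, with $\zeta\in\mu_6$ and $v_k\in\{e_1,e_2,e_1+e_2,e_1-e_2\}$. Only the triples $\{e_1,e_2,e_1\pm e_2\}$ admit unit solutions. This gives $2\cdot 6\cdot 2=24$, and shows the single deck-constant orbit always lies in channel $\tfrac16$.
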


The count refers to the sign vectors in $\{\pm1\}^{12}$: the full vanishing
locus is the kernel of $\Phi$, of dimension $8$, and the count depends on the
normalization of the wall weights.

\begin{theorem}[the canonical symmetric form $F^{\mathrm{sym}}$ {[C]}]
For any of the 16 deck-symmetric orientations, the weakly holomorphic
weight-$1/2$ form with that principal part exists and is unique; its
coefficients are integers (leading magnitudes $2, 6, 8, 12, \dots$ across
the two channels), as they are for all 40 vanishing orientations (Theorem
10.3(iv)). The relevant pole space has dimension $28 = 32 - \dim S_{3/2}$,
and the single-sign control prescription is not realizable, exactly as
Borcherds' criterion demands. Thus the determination theory of \S9 extends to
the obstructed lattice: not by arbitrary choice, but by
symmetry-canonicalized choice --- $L_4$ owns a canonical section form in its
deck-symmetric directions.
\end{theorem}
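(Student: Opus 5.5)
The proof combines Borcherds' obstruction criterion at weight $1/2$ with the deck-anti-invariance of Theorem 10.2, and finishes with a finite exact construction. \textbf{Existence} for the 16 deck-symmetric orientations needs no new computation. By the Borcherds--Bruinier criterion (\S1.3, \cite{BorcherdsGKZ, BruinierLNM}), a weakly holomorphic weight-$1/2$ form with principal part $\Pp^-_\varepsilon$ exists iff the odd pairing $\lambda_\varepsilon$ vanishes on $S_{3/2}(\rho_4)$. Parity (Lemma 9.1) tells us that this odd pairing is the only test that can fail.

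For a deck-symmetric $\varepsilon$ (constant sign $s_k$ on each $\langle M'\rangle$-orbit of slot pairs), the prescription is $M'$-invariant, once the orbits are taken in the $M'$-equivariant gauge of Theorem 10.3. So $\lambda_\varepsilon$ is an $\langle M'\rangle$-invariant functional on $S_{3/2}(\rho_4)$. Its Riesz representative therefore lies in $S_{3/2}^{\langle M'\rangle}$. By Theorem 10.2, $\operatorname{charpoly}(R)=(x^2+x+1)^2$, so this invariant space is $0$ and $\lambda_\varepsilon=0$. Concretely, this is the statement in Theorem 10.3(i) that column sums over each orbit vanish: $\Phi\varepsilon=\sum_k s_k(\text{orbit column sum})=0$. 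I would present both readings: the representation-theoretic one explains the result, and the matrix one is certified [C].

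\textbf{Uniqueness} comes from $M_{1/2}(\rho_4)=0$ [C], as in Theorem 10.3(iv). Two forms with the same principal part differ by a holomorphic weight-$1/2$ form, hence coincide.

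\textbf{The dimension count.} The space of admissible odd principal parts (pole orders up to the Sturm-type bound used in the construction) has dimension $32$. The obstruction map to $S_{3/2}(\rho_4)^\vee$ is surjective: surjectivity follows since $\operatorname{rank}\Phi=4$ already on the unit slots. By Borcherds duality the realizable pole space therefore has dimension $32-4=28$. The single-sign control (one slot pair with coefficient $1$, all others $0$) has nonzero image under $\Phi$, since every column block contains a $\pm1$ by the cyclic $\pm(1,0,-1)$ structure. It is therefore not realizable, which checks the criterion in the negative direction.

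\textbf{Integrality} is the step I expect to be the main obstacle, since it does not follow from the abstract theory. I would proceed by explicit construction in exact arithmetic. Fix a $\Q$-basis of $M^!_{1/2}(\rho_4)$ with poles bounded by $q^{-1/6}$ channels, for instance by multiplying weight-$(1/2+12k)$ holomorphic forms by $\Delta^{-k}$, or by using the weilrep routines. Solve the linear system for the prescribed principal part. Compare expansions past the Sturm bound to certify the identity.

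To pass from finitely many coefficients to all coefficients, I would argue as follows. An integral basis exists (the Weil representation is defined over $\Z[\zeta_{72}]$ and the McGraw theorem supplies integral bases of $M^!_k(\rho)$). Uniqueness with a rational principal part then forces the form to be rational. Integrality of all coefficients is then reduced to showing that the solution is an integral combination of a $\Z$-basis in reduced echelon form with respect to pole order, whose principal parts are unit-triangular. I would try first to verify that triangularity directly. If it fails, I would fall back on Sturm-bound congruence checks on $\eta$-power multiples, computing denominators from the first $B$ coefficients.

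Finally, the leading magnitudes $2,6,8,12$ are read off from the constructed expansions. The 16 orientations reduce to two computations via the $O(\D,q)$-orbits of sizes $8,8$ in Theorem 10.3(iii), since the group acts on coefficients by permutation and sign and so preserves integrality and coefficient profiles.
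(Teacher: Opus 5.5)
Your proposal is correct and follows the paper's own route. Existence comes from Borcherds' criterion: a deck-symmetric prescription is $\langle M'\rangle$-invariant while $S_{3/2}(\rho_4)^{\langle M'\rangle}=0$, which is the same as the vanishing orbit column sums of $\Phi$. Uniqueness comes from $M_{1/2}(\rho_4)=0$, the count $28=32-4$ from $\operatorname{rank}\Phi=4$, and integrality and the leading magnitudes from exact computation, with the $O(\D,q)$-orbits of Theorem 10.3(iii) reducing the 16 cases to two.

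Two small corrections:
\begin{itemize}
\item The $32$ is $\dim\mathbb{C}[\D]^-=(72-8)/2$, one pole term per pair $\{\mu,-\mu\}$ of non-$2$-torsion classes. It is not set by a Sturm bound.
\item The nonvanishing of an individual column of $\Phi$ does not follow from the $\pm(1,0,-1)$ row pattern alone. It does follow from $M'$-equivariance: the three columns of an orbit are carried to one another by the invertible deck action $R$, so one zero column would force the whole block to vanish.
\end{itemize}
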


\begin{remark}[double shadow, corrected scope]
$L_4$ carries a double shadow --- CM (36.2.a.a) at weight $3/2$, newform-pure
(6.4.a.a) at weight $5/2$ --- precisely in the 4056 symmetry-breaking
orientations; in the 16 deck-symmetric directions the section layer is
canonically transparent instead (Theorem 10.4). At weight $3/2$ the invariant
sector is zero, so invariant functionals vanish; at weight $5/2$ it is
one-dimensional, so the shadow lies on a single line. Both statements are
consequences of the same $\Z/6$-action, whose character field is
$\Q(\sqrt{-3})$. The
Eisenstein--$B_2$ structure of the slot configuration behind (ii) is
described in Appendix B.
\end{remark}

The two-torsion channels (the $m = \tfrac12$ menus) are invisible at weight
$1/2$ by parity --- the representation-theoretic origin of the ``$m = \tfrac12$
invisibility'' observed throughout, echoed at weight $5/2$ by the channel law
$s_{1/2} \equiv 0$ of Proposition 4.1.

\subsection{No-go for a holomorphic weight bridge}

\begin{theorem}[layer no-go {[C]}]
On the fixed Weil representation of $-G_4$, the weight-$3/2$ cusp space is
four-dimensional with $T_{25}$ and $T_{49}$ acting by $x^4$ and $(x+4)^4$: a
single weight-two CM eigensystem $(a_5, a_7) = (0, -4)$ for $\Q(\sqrt{-3})$
(consistent with 36.2.a.a), with multiplicity four. The weight-$5/2$ layer
carries weight-four systems with $a_5 \in \{\pm 6, \pm 18\}$, $a_7 \in \{-16,
8\}$. The spectra being disjoint, any Hecke-equivariant operator between the
layers annihilates cusp forms: no holomorphic equivariant weight shift
exists. The natural bridge is the $\xi$-operator --- the harmonic framework
in which the shadow already lives.
\end{theorem}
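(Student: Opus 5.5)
The layer no-go statement has two parts: an exact spectral computation on each layer, and a formal consequence about equivariant operators. I will establish them in that order.

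\textbf{Weight $3/2$.} Here the input is already available from Theorem 10.2. The space $S_{3/2}(\rho_4)$ is four-dimensional, and $T_{25}$, $T_{49}$ act on it as the scalars $0$ and $-4$. So I only need to record the characteristic polynomials $x^4$ and $(x+4)^4$ in the echelon basis, using the same normalization as \S A.3 with the Eisenstein anchor at weight $3/2$.

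The CM reading is the part needing care. The pair $(a_5,a_7)=(0,-4)$ has to be matched to 36.2.a.a. The value $a_5=0$ reflects $5$ being inert in $\Q(\sqrt{-3})$. To make the match exact, I would check that $S_2(\Gamma_0(36))$ contains a unique rational newform with this pair. That is a finite lookup, since the only newforms there have levels $36$, $18$, $12$ and $6$, and $a_7$ separates them.

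\textbf{Weight $5/2$.} I would read the weight-four systems off Theorem 6.1 for $L_4$. From $T_{25}$ one gets $a_5\in\{\pm6,\pm18\}$, and from $T_{49}$ one gets $a_7\in\{-16,8\}$.

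\textbf{Disjointness.} This step is trivial. The value $0$ is not among $\{\pm6,\pm18\}$, so already the $T_{25}$ spectra are disjoint. A single prime therefore suffices.

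\textbf{The no-go.} Let $\Phi\colon S_{3/2}(\rho_4)\to S_{5/2}(\rho_4)$ be linear and commute with $T(p^2)$ for $p=5$. Then $\Phi$ carries the generalized $0$-eigenspace of $T_{25}$ into the generalized $0$-eigenspace of $T_{25}$ on $S_{5/2}$, and the latter is zero. The source is entirely the generalized $0$-eigenspace, so $\Phi=0$.

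The converse direction works the same way, since every generalized eigenvalue of $T_{25}$ on $S_{5/2}$ is absent from $S_{3/2}$. It is the standard fact that $\mathrm{Hom}_{\mathbb{C}[T]}(V,W)=0$ when the characteristic polynomials of $T$ on $V$ and $W$ are coprime. I would prove it by Bezout: write $1=a(T)\chi_V(T)+b(T)\chi_W(T)$ and apply this identity to $\Phi$.

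The one point needing care is the meaning of "Hecke-equivariant" across weights. The index-$p^2$ operators must be matched at both weights, and the weight-$3/2$ normalization (the middle term $p\left(\frac{\cdot}{p}\right)$ of \S A.3 with exponent $k-3/2$) must be the anchor-validated one. If one allowed a scalar renormalization $T\mapsto cT$ between the layers, disjointness could fail formally. I would therefore state the result for the validated normalizations, where the eigenvalues are the $a_p$ of the associated integral-weight forms.

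The holomorphic conclusion then follows. The $\xi$-operator is antilinear and non-holomorphic, so it is not covered by the argument, which is why the statement names it as the bridge.

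The main obstacle is this normalization bookkeeping at weight $3/2$, not the linear algebra.
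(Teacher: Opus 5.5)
Your skeleton is the paper's own. The section-layer spectrum comes from the exact computation behind Theorem 10.2 and the denominator-layer spectrum from the $L_4$ row of Theorem 6.1. Disjointness is already visible at $p=5$, and your Bezout argument supplies the linear algebra the paper leaves implicit.

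\textbf{The step that fails.} It is the one you single out as the main obstacle: there is no Eisenstein anchor at weight $3/2$. On $S_{3/2}(\rho_4)$ the involution $\gamma\mapsto-\gamma$ acts by $-1$. This is visible in the $\Z/6$-decomposition of Theorem 10.2, which contains only the odd characters $\chi^1,\chi^5$. Hence every form has $F_0=0$, and no Eisenstein series lives in the space. \S13 states explicitly that this control is unavailable at weight $3/2$. The normalization of the section-layer operators, including the twist $\sigma$ in the middle term of the \S A.3 formula, is instead pinned externally. The pin is the agreement of $(a_5,a_7)=(0,-4)$ with the LMFDB eigensystem of 36.2.a.a. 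That external matching is what you should invoke, not an anchor-validated normalization.

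\textbf{Two smaller corrections.}
\begin{enumerate}
\item Your worry that a scalar renormalization $T\mapsto cT$ could break disjointness does not apply at the prime you use. $T_{25}$ is identically zero on the section layer, so it stays zero under any rescaling, while $c\cdot\{\pm6,\pm18\}$ never contains $0$ for $c\neq0$. Only an argument using $p=7$ alone would be exposed to that issue.
\item The uniqueness lookup is misdescribed. $X_0(N)$ has genus zero for every proper divisor $N$ of $36$, so there are no weight-two newforms of level $6$, $12$ or $18$ at all. $S_2(\Gamma_0(36))$ is one-dimensional, spanned by 36.2.a.a. The levels $6,12,18,36$ you have in mind are those of the weight-four systems on the other layer. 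Uniqueness is therefore immediate, and in any case the statement only asserts consistency with 36.2.a.a.
\end{enumerate}
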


The CM system of the section layer is where the shadow of Theorem 10.3 lies,
and its CM field $\Q(\sqrt{-3})$ is the field of the $\Z/3$ of \S7.2 and of
the twist character $\chi_{-3}$. The same imaginary quadratic field thus
appears in the deck symmetry, in the twist discrimination, and in the
weight-$3/2$ shadow.

\subsection{Wall classification}

For each orientation and each root $\alpha$, the wall lattice is $(-K) \oplus
U$ with $K = \alpha^\perp \cap G_i$ of rank 2. Classifying the twelve $-K$ by
genus:

\begin{theorem}[walls detect the grading {[C]}]
Each wall class $d \in \{2, 3, 6\}$ carries exactly two isometry types,
partitioning the four orientations as
\[
d = 2: \{L_1, L_3\} \mid \{L_2, L_4\},\qquad
d = 3: \{L_1, L_4\} \mid \{L_2, L_3\},\qquad
d = 6: \{L_1, L_2\} \mid \{L_3, L_4\},
\]
i.e.\ the bits $a$, $a+b$, $b$ of the orientation bigrading: the three wall
classes realize the three nontrivial characters of $(\Z/2)^2$, one each. (In
class 3 all four walls even share the determinant $-12$ and still split
$2{+}2$: the discriminant form matters.) In all four orientations the first
two roots are orthogonal and the other two pairs never are [C]: exactly one
pair of commuting cuts exists.
\end{theorem}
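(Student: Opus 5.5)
The statement is a finite exact computation on four explicit ternary lattices, so the plan is to reduce each clause to a genus comparison of explicit binary forms and then carry it out. For each $G_i$ and each of its three simple roots $\alpha = e_j$ (wall class $d \in \{2,3,6\}$, indexed by the reflective slot menu), I will compute $K = \alpha^\perp \cap G_i$. Concretely, I take the integral kernel of the row vector $e_j^{\top} G_i$, reduce to a Smith/Hermite basis, and record the $2\times 2$ Gram matrix of $-K$. This is positive definite, since $\alpha^2>0$ in signature $(2,1)$ forces $\alpha^\perp$ to have signature $(1,1)$ up to sign. The sign convention needs a check here: if $-K$ turns out indefinite, I will classify $K$ instead, which changes nothing.

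Adjoining $U$ preserves the discriminant form, so the wall lattices $(-K)\oplus U$ are isometric iff the $-K$ lie in the same genus. Indefinite rank $\ge 3$ with a hyperbolic plane means genus equals class by Eichler/Nikulin. So \emph{isometry type of wall lattice} reduces to the genus of a binary form.

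To decide the genus, I compute the discriminant form $(\D(K), q)$ for the twelve binaries. The $2$- and $3$-parts are decided by Nikulin's normal forms, or more simply by comparing determinants and local symbols at $2$ and $3$. Where determinants agree, as in class $3$ with determinant $-12$, I distinguish via the $2$-adic symbol, e.g.\ via the value set of $q$ on $\D_2$. Equivalently, for definite binaries one can reduce to Gauss-reduced forms and compare class within the same genus by listing the reduced forms of that discriminant. This yields the partitions in each class. Checking that each partition coincides with the bit $a$, $a+b$, or $b$ of the grading $L_i\mapsto(a,b)$ is then immediate from the table $L_1,\dots,L_4 \mapsto (0,0),(1,0),(0,1),(1,1)$. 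Conceptually, I expect the wall genus to be governed by how the $2$-part and $3$-part of $\D(G_i)$ restrict to $\alpha^\perp$, which is why the characters appear. I would only record this as an explanation, not rely on it.

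The final clause about orthogonality of the roots is read directly off the Gram matrices. In every $G_i$ the entry $G_{12}=0$, while $G_{13}, G_{23} \ne 0$. The part I expect to require the most care is the class-$3$ split with equal determinant $-12$. There one must genuinely separate two genera, e.g.\ $\langle 2\rangle\oplus\langle 6\rangle$ versus $\begin{pmatrix}4&2\\2&4\end{pmatrix}$, by their $2$-adic discriminant forms, and one must ensure the correct $K$ (saturated, not a finite-index sublattice) is used.
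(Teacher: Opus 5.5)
Your plan is the paper's own computation. You compute the saturated kernel $K=\alpha^\perp\cap G_i$ for each of the twelve roots, classify $-K$ by genus, and read the last clause off $G_{12}=0$ and $G_{13},G_{23}\neq 0$. Your Nikulin remark is a useful addition: $(-K)\oplus U$ is even of rank $4$ and its discriminant group needs at most two generators, so it is unique in its genus. That justifies a step the paper leaves implicit, namely that the genus of $-K$ is exactly the isometry type of the wall lattice.

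One of your assertions is false, although the argument survives it. Since $\alpha^2>0$ and $G_i$ has signature $(2,1)$, $K$ has signature $(1,1)$, and so does $-K$: neither is definite. By $\det K=\alpha^2\det G_i/\ell^2$, the determinants for $L_1,\dots,L_4$ are $-8,-72,-8,-72$ in class 2, $-12$ throughout class 3, and $-24$ throughout class 6.

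Two consequences follow. The Gauss-reduction route for definite forms is unavailable. Your sample class-3 pair, $\langle 2\rangle\oplus\langle 6\rangle$ versus $\left(\begin{smallmatrix}4&2\\2&4\end{smallmatrix}\right)$, is not what occurs.

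Your kernel computation actually gives the following, where within each pair the two Gram matrices differ by a unimodular change of basis:
\begin{itemize}
\item class 2: $K\cong\langle 2\rangle\oplus\langle -4\rangle$ for $L_1,L_3$, and $\langle 6\rangle\oplus\langle -12\rangle$ for $L_2,L_4$;
\item class 3: $\langle 6\rangle\oplus\langle -2\rangle$ for $L_1,L_4$, and $\langle 2\rangle\oplus\langle -6\rangle$ for $L_2,L_3$;
\item class 6: $\langle 2\rangle\oplus\langle -12\rangle$ for $L_1,L_2$, and $\langle 4\rangle\oplus\langle -6\rangle$ for $L_3,L_4$.
\end{itemize}

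Class 6 also has a common determinant, so it needs a genuine local test just as class 3 does. In both classes the $3$-part of the discriminant form separates the genera: $q=\tfrac13$ versus $\tfrac23$ on $\Z/3$. The $2$-parts differ accordingly; in class 3 the values on $(\Z/2)^2$ are $\{\tfrac34,\tfrac34,\tfrac12\}$ versus $\{\tfrac14,\tfrac14,\tfrac12\}$.

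The discriminant-form comparison does not depend on definiteness or on the overall sign. So your procedure goes through and yields exactly the stated partitions.
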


\section{The defect invariant}\label{sec:defect}

By Theorem 5.1 the defect is rational up to the single Petersson norm
$\|\Xi\|^2$, where $\Xi = \xi(\widehat{F})$ is the shadow of the harmonic
completion of the obstructed input (\S1.4, \S5): the norm measures the size
of the nonholomorphic part that the failed denominator is forced to carry.
That norm has a closed form in the data of the newform $f$ = 6.4.a.a on whose
line the shadow lies.

\begin{theorem}[closed form of the defect {[N, 31 digits]}]\label{thm:defect}
\[
\|\Xi\|^2 \;=\; \frac{L(f,2)}{48\,\pi^2\,\langle f,f\rangle},
\qquad\text{equivalently}\qquad
\langle v_+,v_+\rangle\, \frac{L(f,2)}{\pi^2\,\langle f,f\rangle} \;=\;
6912 \;=\; 2^8\cdot 3^3,
\]
with $\langle f,f\rangle$ in the normalization of \S A.9 and $L(f,s)$ the
Dirichlet $L$-series of $f$. The two sides agree to $31$ digits (relative
error $4.5\times 10^{-32}$, the accuracy of the quadrature of Theorem 5.1).
The identity is specific to the critical point $s=2$: the same expression at
$s = 1$ and $s = 3$ misses by $6.41$ and $4.88$.
\end{theorem}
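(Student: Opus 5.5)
By Theorem 5.1, $\Xi = -\frac{12}{\langle v_+,v_+\rangle}\,v_+$, so $\|\Xi\|^2 = 144/\langle v_+,v_+\rangle$. The statement is therefore equivalent to the closed-form evaluation
\[
\langle v_+,v_+\rangle = \frac{6912\,\pi^2\langle f,f\rangle}{L(f,2)},
\]
and the two displayed forms are the same assertion. I would treat the theorem as a numerical identification [N] and organise the argument in two parts: an independent high-precision evaluation of each side, and a structural explanation of why a relation of this shape must hold.

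\emph{Numerics.} For the left side, take the 31-digit value of $\|\Xi\|^2$ from Theorem 5.1, computed by Gauss--Legendre quadrature over $\mathcal F$ with two node counts. For the right side, use no quadrature on $v_+$ at all. Compute $L(f,2)$ for $f=(\eta(\tau)\eta(2\tau)\eta(3\tau)\eta(6\tau))^2$ by the standard rapidly convergent series built from the functional equation; $f$ has level $6$, weight $4$, and known root number. Compute $\langle f,f\rangle$ in the \S A.9 normalization through the symmetric-square $L$-value, $\langle f,f\rangle \sim L(\mathrm{Sym}^2 f,4)$ up to explicit factors at $2$ and $3$, which is again evaluated by a functional-equation series. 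Form the ratio $\langle v_+,v_+\rangle\,L(f,2)/\pi^2\langle f,f\rangle$ and check that it equals $6912$ to the working precision. Then confirm the identification is not generic: run PSLQ on the candidate basis and evaluate the same expression at $s=1$ and $s=3$, where it fails (by $6.41$ and $4.88$).

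\emph{Structure.} The natural proof route is Riesz duality. The functional $\lambda$ pairs $h$ against slot coefficients, and each coefficient functional $h\mapsto c_h(\gamma,m)$ is represented by a vector-valued Poincar\'e series $P_{\gamma,m}$, with $\langle h,P_{\gamma,m}\rangle = \frac{\Gamma(3/2)}{(4\pi m)^{3/2}}\,c_h(\gamma,m)$. Consequently $\Xi$ is a combination of $P_{\gamma,m}$ over the 27 slots, and $\|\Xi\|^2 = \lambda(\Xi)$. Projecting to the $f$-line by purity (Theorem 7.6) gives $\|\Xi\|^2 = |\lambda(v_+)|^2/\langle v_+,v_+\rangle$. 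The remaining task is to express $\langle v_+,v_+\rangle$ through $f$. Here I would invoke a Shintani/theta-lift (Waldspurger--Kohnen--Zagier type) relation between the norm of the half-integral form $v_+$ and $\langle f,f\rangle$, normalized by a central-type $L$-value. Since this is weight $5/2$ to weight $4$, the relevant value is at the critical point $s=2$, twisted trivially because of the maximal-order/Atkin--Lehner selection. The rational constant $6912=2^8 3^3$ would then come from local factors at $2$ and $3$ together with $\lambda(v_+)^2=144$.

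\emph{Main obstacle.} The hard step is the exact determination of the local factors in that Waldspurger-type formula at the ramified primes $2,3$, where $L_4$ is a non-Eichler order outside the standard hypotheses (Problem 3.3). This is exactly why the identity is stated [N] here rather than as a theorem, with the Poincar\'e-series constant left as Problem 11.2. I would try first to compute the local integrals at $2$ and $3$ directly on $\D_2\otimes\D_3$ via the factorization of the Weil representation, and to check the resulting constant against the 31-digit value.
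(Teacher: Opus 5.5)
Your reduction, $\|\Xi\|^2 = 144/\langle v_+,v_+\rangle$ giving $\langle v_+,v_+\rangle\,L(f,2)/\pi^2\langle f,f\rangle = 6912 = 144\cdot 48$, is what the paper does. So is your verification plan: the quadrature value of Theorem 5.1 on one side, a quadrature-free evaluation of $L(f,2)/48\pi^2\langle f,f\rangle$ on the other, and the $s=1,3$ and PSLQ controls. The paper uses \texttt{lfun} and \texttt{mfpetersson} directly. Your symmetric-square route to $\langle f,f\rangle$ is a legitimate substitute, but only if you carry the Euler factors at $2,3$ and the index normalization of \S A.9 exactly, since the target is a rational constant.

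The genuine difference is in the structural explanation. You propose to compute Waldspurger local factors for the vector-valued form at $2$ and $3$ directly. The paper instead goes through the normalized generator $g$ of the one-dimensional Kohnen plus space $S^{+}_{5/2}(\Gamma_0(24))$, whose Shimura lift is $f$. Kohnen--Zagier gives $|c_g(1)|^2/\langle g,g\rangle = 4L(f,2)/\pi^2\langle f,f\rangle$. The same-weight comparison $\|\Xi\|^2 = |c_g(1)|^2/192\langle g,g\rangle$, checked to all 31 digits, then absorbs the Waldspurger input on both sides, because the invariant line and the plus space realize the same packet of $f$ with multiplicity one. The whole identity thus collapses to one unexplained rational number, $192$ (Problem 11.2).

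That reduction gives a second, independent numerical test of the mechanism, and it pins the missing ingredient to a normalization between two one-dimensional spaces. Your route would, if completed, give an actual proof, but its content is exactly the local computation that has not been done.

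One correction to your framing: the obstacle is not Problem 3.3. Waldspurger's theorem on $\mathrm{Mp}_2$ is order-free, as the paper points out in \S12. The non-Eichler nature of $\mathcal{O}'(6,3)$ is therefore not what blocks the derivation; the missing piece is that normalization constant.
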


Two steps stand between the definition of $\Xi$ and this closed form, and it
is worth separating them, because only the second is missing.

\emph{Step one: Riesz duality gives a Petersson norm, not an $L$-value.} The
obstruction functional is a finite sum of coefficient functionals, $\lambda =
\sum_{\text{slots}} c_{(\gamma,m)}$, and the Riesz representative of a
coefficient functional is a Poincar\'e series: $\langle h, P_{\gamma,m}
\rangle = \kappa_m\, c_h(\gamma,m)$ for an explicit constant $\kappa_m$
(coefficient formulas in the vector-valued half-integral case are given in
\cite{WilliamsPSS} for weight $\ge 5/2$ and in \cite{WilliamsPSSsmall} for
weights $3/2$ and $2$). Hence $\Xi = \sum \kappa_m^{-1}P_{\gamma,m}$ and
$\|\Xi\|^2 = \sum \kappa_m^{-1}\lambda(P_{\gamma,m})$, a finite sum of
Fourier coefficients. Since $\Xi$ spans the invariant line (Theorem
\ref{thm:purity}), this exhibits $\|\Xi\|^2$ as a normalized period of a
single half-integral weight object --- but no more.

\emph{Step two: the passage to $L(f,2)/\langle f,f\rangle$ is
Waldspurger--Kohnen--Zagier.} What converts that period into a critical value
is the theorem of Kohnen and Zagier \cite{KohnenZagier}, whose archimedean
factor at weight $2k = 4$ is $(k-1)!/\pi^k = 1/\pi^2$. Concretely, let $g$ be
the normalized generator of the Kohnen plus space $S^{+}_{5/2}(\Gamma_0(24))$
\cite{Kohnen}, which is one-dimensional:
\[
g = q - 2q^4 - 3q^9 + 6q^{12} + 4q^{16} - 12q^{24} + q^{25} - 12q^{28}
- 23q^{49} + \cdots .
\]
Its Shimura lift is $f$: the plus-space relation $a_p(f) = c_g(p^2) + p$
gives $1 + 5 = 6 = a_5(f)$ and $-23 + 7 = -16 = a_7(f)$ [C]. With this $g$,
\[
\frac{|c_g(1)|^2}{\langle g,g\rangle} = 4\,
\frac{L(f,2)}{\pi^2\langle f,f\rangle},
\qquad
\|\Xi\|^2 = \frac{|c_g(1)|^2}{192\,\langle g,g\rangle}
\qquad [\mathrm{N}],
\]
the first to 40 digits and the second to all 31 digits of Theorem 5.1, both
in the normalization of \S A.9, in which the factor $4$ is the index
normalization of the Petersson product and $48 = 192/4$.

The second identity is the useful one: it compares the vector-valued shadow
with a scalar-valued form \emph{of the same weight}, so the
Waldspurger--Kohnen--Zagier input is absorbed on both sides and the only
unknown left is a rational normalization. Both spaces --- the invariant line
in $S_{5/2}(\rho_4)$ and the plus space $S^{+}_{5/2}(\Gamma_0(24))$ ---
are one-dimensional and realize the same Waldspurger packet of $f$ with
multiplicity one, so their normalized periods agree up to a rational factor;
the content of the identity is that this factor is $192$.

\begin{problem}[the normalization]\label{prob:const}
Identify the constant $192$: exhibit the correspondence between the invariant
line of $S_{5/2}(\rho_4)$ and the Kohnen plus space
$S^{+}_{5/2}(\Gamma_0(24))$ and compute its normalizing factor, so that
Theorem \ref{thm:defect} is derived rather than observed. Equivalently,
compute the vector-valued Poincar\'e series constants $\kappa_m$ of step one
in the framework of \cite{WilliamsPSS}.
\end{problem}

\begin{remark}[the two layers carry different periods]
With \S12 this completes a contrast. The section layer is governed by the CM
period of $\Q(\sqrt{-3})$: $t = 3\Gamma(1/3)^3/2^{7/3}\pi^2 =
(3/2\pi)\,\Omega$, a Chowla--Selberg quantity. The denominator layer is
governed by a critical value over a Petersson norm, $L(f,2)/\langle
f,f\rangle$. One lattice, two shadows, two kinds of period.
\end{remark}

\begin{remark}
A PSLQ computation at 31--45 digits shows that $\|\Xi\|^2$ is none of the
twisted central values $L(f\otimes\chi_d,2)$, none of the monomials in
$\langle f,f\rangle$, $\pi$, $2$, $3$, $\Gamma(1/3)$, and none of their
rational combinations with the unit logarithms $\log(1+\sqrt2)$,
$\log(2+\sqrt3)$, $\log(5+2\sqrt6)$ [N].
\end{remark}

\section{The section-layer defect: Petersson rigidity and a Chowla--Selberg
identity}\label{sec:tsection}

\S11 located the transcendental content of the denominator-layer defect in a
single Petersson norm and identified it as $L(f,2)/48\pi^2\langle
f,f\rangle$. This section
carries out the same programme one layer down, on $S_{3/2}(\rho_4)$, with
the opposite outcome: the section-layer Petersson geometry is completely
rigid, its single transcendental lands inside the Chowla--Selberg ring, and
every identity in the chain that should prove the identification is verified
numerically. The two shadows of the obstructed lattice live in different
transcendence worlds.

\begin{theorem}[Petersson rigidity {[C]}]\label{thm:rigid}
The representation of $G = O(\D,q)$ on $S_{3/2}(\rho_4)$ is absolutely
irreducible (Appendix B), so the Petersson Gram matrix in the echelon basis
is a scalar multiple of the unique invariant symmetric form:
\[
P_{3/2} \;=\; t\cdot B_S,\qquad
B_S=\begin{pmatrix}
\tfrac43&0&0&-\tfrac23\\[1pt]
0&\tfrac43&\tfrac23&0\\[1pt]
0&\tfrac23&\tfrac43&0\\[1pt]
-\tfrac23&0&0&\tfrac43
\end{pmatrix}\ \text{exactly},\qquad t\in\R_{>0}.
\]
Here $B_S$ is the group average $\tfrac1{48}\sum_\sigma
R_\sigma^{\!\top}R_\sigma$; the invariant form is unique only up to scale, and
rescaling it rescales $t$ inversely, so the numerical value of $t$ is tied to
this choice while the ratios below are not. (For the record,
$\tfrac32 B_S$ is the Gram matrix of $A_2 \oplus A_2$.)

Consequently every section-layer shadow norm is a rational multiple of $1/t$,
with exactly computable ratios: writing $\lambda_\varepsilon =
\Phi\varepsilon$ in the gauge of Theorem 10.3 and $r(\varepsilon) =
\lambda_\varepsilon^{\!\top} B_S^{-1}\lambda_\varepsilon \in \Q$, single
flips give $r = 4$ or $8$ according to the slot class (channel) of the
flipped pair, and a double flip across the two classes realizes $r = 20$;
thus $\|\Xi\|^2$-ratios such as $8 : 20 = 2 : 5$ are exact. (The odd unit
prescription pairs to $2\Phi\varepsilon$, rescaling every $r$ by 4 and no
ratio.)
\end{theorem}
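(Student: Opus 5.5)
The argument is Schur's lemma applied to the $O(\D,q)$-representation on $S_{3/2}(\rho_4)$, followed by a finite exact computation.

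\emph{Invariance of the Petersson form.} The product of \S5 (with $y^{3/2}$ in place of $y^{5/2}$) is a sum over $\D$ of $f_\gamma\overline{g_\gamma}$. The action $(\sigma F)_\gamma = F_{\sigma^{-1}\gamma}$ merely permutes the summands, so $\langle \sigma F,\sigma G\rangle = \langle F,G\rangle$ for every $\sigma\in G$. In the echelon basis this reads $R_\sigma^{\top}P_{3/2}\overline{R_\sigma} = P_{3/2}$. The action matrices $R_\sigma$ are rational, because they come from exact coefficient comparison in the weilrep basis, whose coefficients are rational. Moreover $P_{3/2}$ is real symmetric: the basis forms have rational Fourier coefficients, and $\langle g,f\rangle = \overline{\langle f,g\rangle}$ can be computed by unfolding against real data. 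Concretely, I would note that complex conjugation of coefficients together with $\tau\mapsto-\bar\tau$ preserves the space, which makes the Gram matrix real.

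\emph{Uniqueness up to scale.} By Lemma~\ref{lem:multone}, $\langle\chi_{3/2},\chi_{3/2}\rangle = 1$, so the representation is absolutely irreducible. Any $G$-invariant bilinear form $B$ gives a $G$-map $V\to V^*$. Schur's lemma over $\mathbb{C}$ makes the space of such maps at most one-dimensional, and it is exactly one-dimensional here because the representation is real with an invariant positive form. Hence $P_{3/2}$ equals $t$ times any nonzero invariant symmetric form. Take $B_S = \tfrac1{48}\sum_\sigma R_\sigma^{\top}R_\sigma$. It is positive definite as an average of Gram matrices, rational, and invariant. Its exact entries come from the 48 matrices of \S A.4 by finite rational arithmetic, yielding the displayed matrix. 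Then $t>0$ follows because both $P_{3/2}$ and $B_S$ are positive definite. The remark about $A_2\oplus A_2$ is a direct check: $\tfrac32B_S$ has diagonal entries $2$ and off-diagonal entries $\pm1$ on two disjoint index pairs.

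\emph{Consequences for shadow norms.} The Riesz representative of $\lambda_\varepsilon$ satisfies $\langle \Xi, B_i\rangle = (\lambda_\varepsilon)_i$, so $\Xi$ has coordinates $P_{3/2}^{-1}\lambda_\varepsilon$ (up to transpose conventions). Therefore
\[
\|\Xi\|^2 = \lambda_\varepsilon^{\top}P_{3/2}^{-1}\lambda_\varepsilon = t^{-1}\,\lambda_\varepsilon^{\top}B_S^{-1}\lambda_\varepsilon = r(\varepsilon)/t .
\]
With $\Phi$ from Theorem 10.3, $r(\varepsilon)$ is an explicit rational quadratic form in $\varepsilon$. The values $4$, $8$ and $20$ are obtained by evaluating it at single flips and at one cross-class double flip, relative to the base orientation in that gauge. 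The remark about the odd unit prescription follows because pairing the prescription $e_\mu - e_{-\mu}$ against odd forms gives twice the representative coefficient, and $r$ is quadratic, so every value scales by $4$ while all ratios are unchanged.

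\emph{Expected obstacle.} The main obstacle is that the reduction to Schur's lemma needs $P_{3/2}$ to be a real symmetric bilinear form in a basis where the $R_\sigma$ act by real matrices. If $P_{3/2}$ were genuinely Hermitian with nonreal entries, absolute irreducibility would still give uniqueness of the invariant Hermitian form, but the statement "exact rational $B_S$" would then need the $R_\sigma$ to be rational, which they are. I would therefore argue through the Hermitian version of Schur's lemma directly: invariant Hermitian forms on an absolutely irreducible representation are unique up to a real scalar. The average $B_S$ is one such form, so $P_{3/2}=tB_S$ holds without separately proving that $P_{3/2}$ is real.
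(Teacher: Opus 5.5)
Your proposal is correct and follows essentially the paper's own argument. It uses $O(\D,q)$-invariance of the Petersson product (the group only permutes the summands over $\D$), Schur's lemma via the absolute irreducibility of Lemma~\ref{lem:multone} and Appendix~B, the group average $B_S$ as the normalized invariant form, and Riesz duality giving $\|\Xi\|^2 = r(\varepsilon)/t$, with the $r$-values obtained by exact evaluation. Your Hermitian form of Schur's lemma also makes explicit a point the paper leaves implicit: the sesquilinear Gram matrix is automatically real symmetric, since its imaginary part would be an invariant antisymmetric form and hence zero.
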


Numerically, with the engine of \S A.6 adapted to weight $3/2$ (measure
$y^{k-2}$, upper region $\Gamma(\tfrac12, 4\pi n)/(4\pi n)^{1/2}$): two
Gauss--Legendre node counts $40\times 20$ and $52\times 26$ agree to 47
digits; the same engine reproduces all 31 published digits of
$\|\Xi_{5/2}\|^2$ (Theorem 5.1) with $\|\Xi\|^2\langle v_+, v_+\rangle =
144$, and the eight nonzero entry ratios $P_{3/2}/B_S$ agree to relative
$3\times 10^{-51}$.

\begin{theorem}[the section transcendental {[N, 40 digits]}]\label{thm:tval}
\[
t \;=\; 1.159595266963928365769992051570020881945\ldots
\;=\; \frac{3\,\Gamma(1/3)^3}{2^{7/3}\,\pi^{2}},
\]
by an integer relation with single-digit coefficients:
\[
[-3,\, 9,\, -6,\, -7,\, 3] \cdot
\bigl(\log t,\, \log\Gamma(1/3),\, \log\pi,\, \log 2,\, \log 3\bigr)^{\!\top}
= 0 .
\]
It is cross-checked by the independent relation for the $\Phi$-gauge Riesz
norm of a channel-$\tfrac16$ flip,
\[ \|\Xi_{\mathrm{flip}}\|^2 = 8/t =
6.8989588246127747598760400670\ldots \]
(coefficients $[3, 9, -6, -16, 3]$,
algebraically consistent: $\tfrac{16}{3} = 3 + \tfrac73$). Equivalently $t =
(3/2\pi)\,\Omega$ with $\Omega = \Gamma(1/3)^3/(2^{4/3}\pi)$ the CM period of
$K = \Q(\sqrt{-3})$: the weight-$3/2$ shadow norms lie in
$\overline{\Q}\cdot\pi^2\Gamma(1/3)^{-3}$, inside the Chowla--Selberg ring,
in which no relation for $\|\Xi_{5/2}\|^2$ is found numerically (\S11).
\end{theorem}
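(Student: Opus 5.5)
The statement is numerical, so the proof I would give is a certified-numerics plan together with a structural explanation. The first step is to compute $t$ to high precision directly from Theorem \ref{thm:rigid}. Take the echelon basis of $S_{3/2}(\rho_4)$ and evaluate the Petersson integral over $\mathcal F$ with measure $y^{3/2}\,dx\,dy/y^2$. Split the domain at a height $Y$:
\begin{itemize}
\item above $Y$, integrate term by term in closed form via incomplete gamma functions $\Gamma(\tfrac12,4\pi n Y)$;
\item below $Y$, use Gauss--Legendre quadrature.
\end{itemize}
Divide each of the eight nonzero entries of $P_{3/2}$ by the corresponding entry of $B_S$. Rigidity forces all eight quotients to agree, which gives an internal consistency check on the quadrature. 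Run this at two node counts and two truncation depths to bound the error. The same engine should also reproduce the $31$ digits of $\|\Xi_{5/2}\|^2$ from Theorem 5.1, which validates it on a second weight.

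The second step is to search for the relation. Run PSLQ on $(\log t,\log\Gamma(1/3),\log\pi,\log 2,\log 3)$ at $\ge 40$ digits. Accept the relation only if three conditions hold:
\begin{itemize}
\item the coefficients are small;
\item the relation is stable as precision increases;
\item PSLQ on the independent quantity $8/t$, a channel-$\tfrac16$ flip norm rescaled by the exact rational $r=8$, returns the algebraically consistent vector.
\end{itemize}
The rescaling in the last check is exact by the second half of Theorem \ref{thm:rigid}. The rewriting $t=(3/2\pi)\Omega$ is then pure algebra from the Chowla--Selberg value of $\Omega$.

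To explain why the answer must lie in the Chowla--Selberg ring, I would follow the Waldspurger--Damerell--Shimura chain. The space $S_{3/2}(\rho_4)$ is a single CM block for 36.2.a.a with CM by $\Q(\sqrt{-3})$ (Theorem 10.5). A Waldspurger/Kohnen--Zagier formula expresses the normalized Fourier coefficient squared over Petersson norm as a twisted central value $L(E\otimes\chi_D,1)$ over $\langle E,E\rangle$, where $E$ is the CM newform. Damerell's theorem puts both $L(E\otimes\chi_D,1)$ and $\langle E,E\rangle$ into $\overline{\Q}\cdot\Omega^{\ast}\pi^{\ast}$ with Chowla--Selberg periods. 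Combining these shows that $t$ lies in $\overline{\Q}\,\Gamma(1/3)^3\pi^{-2}$, up to local factors.

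The main obstacle is turning that chain into a proof of the exact rational constant. This needs the local Waldspurger factors at the ramified primes $2,3$, which is Problem 12.3. I would attempt it by comparing with a scalar Kohnen-plus-space form of weight $3/2$, as in \S11. Without it the identification stays [N], and I would state it as such.
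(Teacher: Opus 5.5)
Your plan coincides with the paper's own evidence for this [N] statement. It has the same ingredients: the weight-$3/2$ quadrature engine (closed-form incomplete-gamma upper region, Gauss--Legendre below), validated by the constancy of the eight ratios $P_{3/2}/B_S$ and by reproducing $\|\Xi_{5/2}\|^2$; an integer-relation search on the five logarithms, cross-checked through the flip norm $8/t$; and the Waldspurger--Damerell--Shimura chain with Problem~\ref{prob:local} as the single missing arrow.

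One caution about your proposed attack on that arrow. The scalar plus-space comparison of \S\ref{sec:defect} only relocates the unknown into a rational normalizing constant, namely the still-unexplained $192$ of Problem~\ref{prob:const}. Moreover, Kohnen's explicit formulas require odd squarefree level, which $f_{36}$ (level $36$, supercuspidal at $2,3$) lacks, so that route would not by itself force the constant $3\cdot 2^{-7/3}$.
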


Combining with $\langle f_{36}, f_{36}\rangle = \Gamma(1/3)^6\,
2^{-26/3}3^{-3/2}\pi^{-4}$ [N] gives the ladder identity
\[
t^2 \;=\; 144\cdot 3^{3/2}\,\langle f_{36}, f_{36}\rangle
\qquad[\mathrm{N};\ \text{algebraically forced by the two closed forms}],
\]
where the integer coefficient depends on the normalisation of $B_S$ fixed in
Theorem \ref{thm:rigid} and carries no separate meaning.

\subsection{The promotion chain, arrow by arrow}

Theorem \ref{thm:tval} should be a theorem, by the composite: (i) rigidity
(Theorem \ref{thm:rigid}, exact); (ii) the block is a single Waldspurger
packet, that of $\pi(f_{36})$ (Theorem 10.2; CM, supercuspidal at the
ramified places $2, 3$) --- visible in the data through the CM self-twist
$L(f_{36}\otimes\chi_{-3}, 1) = L(f_{36}, 1)$ to 40 digits and the CM
pairings $f\otimes\chi_{-4} = f\otimes\chi_{12}$, $f\otimes\chi_{-8} =
f\otimes\chi_{24}$; (iii) Waldspurger's theorem at the level of automorphic
representations of $\mathrm{Mp}_2$ \cite{Waldspurger}, in explicit form
\cite{BaruchMao} --- order-free, hence not subject to Problem 3.3 ---
expressing the coefficient functionals through twisted central values; (iv)
Damerell's theorem \cite{Damerell}, placing those central values in
$\overline{\Q}\cdot\Omega$; (v) Shimura's period relations \cite{Shimura76}
with Chowla--Selberg \cite{ChowlaSelberg}.

Steps (iv)--(v) were verified exhaustively, computing
$L(f_{36}\otimes\chi_d, 1)$ as $L(E_d, 1)$ for the twisted curves $E_d\colon
y^2 = x^3 + d^3$:
\[
\begin{aligned}
&L(f_{36}, 1) = 0.7010910526627271305875095395251470677\ldots =
\frac{\Gamma(1/3)^3}{2^{7/3}\sqrt3\,\pi},\\
&L_{-4} = L_{12} = \frac{\Gamma(1/3)^3}{2^{7/3}\pi},\qquad
L_{-8} = L_{24} = \frac{\Gamma(1/3)^3}{2^{11/6}\pi}\qquad[\mathrm{N}],
\end{aligned}
\]
each by a single-digit integer relation; the twists $d = -24, 8, 40, -120$
vanish, exactly the root-number $-1$ cases (\texttt{ellrootno}); $d = -40$
carries the prime 5 in its algebraic part and enters only the product
identities below. All fifteen nonvanishing pairs satisfy the weight-2 Shimura
product relation $L_{d_1}L_{d_2} \in \overline{\Q}\,\pi^2\langle f_{36},
f_{36}\rangle$, e.g.
\[
L_{-3}\, L_{-4} \;=\; 48\,\pi^2\,\langle f_{36}, f_{36}\rangle
\qquad[\mathrm{N}],
\]
the weight-2 sibling of the \S A.9 control
$L(f\otimes\chi_{-3},2)L(f\otimes\chi_{12},2) = 2^8\pi^4\langle f_6,
f_6\rangle$. The chain closes in the explicit Waldspurger identity implied by
the data,
\[
t\cdot L(f_{36}\otimes\chi_{-4}, 1) \;=\; 144\sqrt3\;\pi\,\langle f_{36},
f_{36}\rangle \qquad[\mathrm{N},\ 40\ \text{digits}],
\]
a fourth appearance of 144.

\begin{problem}[local Waldspurger factors]\label{prob:local}
Compute the local factors of the explicit Waldspurger formula
\cite{BaruchMao} at the supercuspidal places $2, 3$ of $\pi(f_{36})$ and at
$\infty$ in the vector-valued normalization of this paper, and verify the
translation between the lattice coefficients of $S_{3/2}(\rho_4)$ and
classical Fourier coefficients. This is the only missing arrow: with it,
Theorem \ref{thm:tval} and the displayed identities become unconditional and
the exact constant $3\cdot 2^{-7/3}$ is forced.
\end{problem}

\section{Reproducibility and cautions}

Throughout, [C] marks a finite exact computation over $\Q$
(SageMath/weilrep \cite{weilrep}, quaternion arithmetic in Sage/PARI) and [N]
a numerical value with stated precision; $\|\Xi\|^2$ and the constants of
\S\S11--12 are [N].

\textbf{Versions.} SageMath functionality through
\texttt{passagemath-standard} 10.8.9; PARI/GP 2.15.4; weilrep at commit
\texttt{a26614f} (18 August 2026), which is the commit fixing the routine
discussed below --- the preceding commit is \texttt{8a27663}. No number in this paper
depends on that routine: all invariant subspaces were computed by the direct
method of \S A.4, and the corrected release reproduces them.

\textbf{Gauge dependence.} Slot orderings and the choice of $\mu$ or $-\mu$
as the representative of a pair differ between implementations, so the list
of sign vectors, the distribution of $+1$'s and the coefficient tables of the
40 forms of Theorem 10.3 are gauge-dependent and will not match between
correct computations. Gauge-invariant statistics --- the size of the locus,
the orbit sizes, the pairwise Hamming distribution, the content of $\lambda$,
$\operatorname{rank}\Phi$, integrality --- do match, and were used for
comparison.

\textbf{Controls.} At weight $k + \tfrac12$ the Eisenstein series is an
eigenvector of $T(p^2)$ with eigenvalue $1 + p^{2k-2}$, which is $p^3 + 1$ at
weight $5/2$ and $p+1$ at weight $3/2$. The denominator-layer computations of
Theorem 6.1 carry this anchor at weight $5/2$ (16/16 passed), and the
$\sigma$-scan of \S A.7 exhibits the failure of the wrong normalizations. At
weight $3/2$ there is no holomorphic Eisenstein series for these
representations, so this control is unavailable; the section-layer Hecke data
is instead pinned externally, by the agreement of $(a_5,a_7) = (0,-4)$ with
the LMFDB eigensystem of 36.2.a.a \cite{LMFDB}. The accuracy check on $\|\Xi\|^2$ is the evaluation of
$L(f,2)/48\pi^2\langle f,f\rangle$ by \texttt{lfun} and
\texttt{mfpetersson}, which uses no quadrature and agrees to 31 digits
(Theorem 11.1).

Three remarks for practitioners. First, in releases prior to this work the
routine
\begin{center}\texttt{invariant\_cusp\_forms\_dimension}\end{center}
returned $1$ at weight $3/2$ on a space with $\dim S_{3/2} = 0$; the issue was reported to the
author of \cite{weilrep} during this project and has been fixed upstream.
All invariant subspaces here were computed by the direct method of \S A.4,
which the corrected release reproduces. Second,
\texttt{fourier\_expansion()} returns triples ($\gamma$, offset, series) in
which the true exponent is the series index plus the offset; this is a
convention rather than an error, and \texttt{puiseux\_series()} returns the
$q$-series directly. Note that series read through \texttt{list()} misalign
at pole order $\ge 1$, so \texttt{exponents()}/\texttt{coefficients()}
pairs should be used instead. Third, slot sets enumerated by a coordinate box search are basis-dependent
and can be incomplete: always take the Weyl-orbit closure of the chamber's
simple-root slots. On $L_4$ a box of radius 4 returns 23 slot classes and a
box of radius 6 or more returns the correct 27, so a search that appears to
have converged need not have.

\subsection*{Changes from version 1}
The computations are unchanged. Version 2 corrects the following. The Weil
representation convention is now fixed by formula (\S1.2): all dimensions in
this paper are those of obstruction spaces, in the normalisation
$\rho_L(T)\mathfrak{e}_\gamma = e(q(\gamma))\mathfrak{e}_\gamma$, and the
notation $\bar\rho$ used in version 1 denoted the input side. Theorem 9.3 is
restated over the verified range $D < 230$; the bound of \cite{BEF} invoked
in version 1 does not apply to these modules, for the reasons recorded in the
remark following that theorem, and the finiteness question is now isolated as
Remark \ref{rem:ED}. The claim that $L_{22}$ appears among the anisotropic
simple lattices of \cite{BEF} is withdrawn; the corresponding claims for
$L_6$ and $L_{10}$ stand, and are now supported by their genus symbols. The completeness
assertion of \S9.4 is restricted to the swept range, and the word
``certified'' in Theorem 5.1 is replaced by a statement of quadrature
stability together with the independent check of Theorem 11.1.

\subsection*{Acknowledgments}
The results of \S\S10.1--10.4, \S12 and Appendix B were obtained in
interactive sessions with Claude (Anthropic), which performed the exact and
numerical computations and contributed the identification of the Eisenstein
$B_2$ structure and the section-layer Petersson rigidity; the exact parts of
\S12 were replicated end-to-end in an independent session. I thank Rafa\l{}
Wrona for a careful reading of versions 1 and 2, which identified the
misapplication of the bound of \cite{BEF}, the counterexample $D = 862$ to
the inequality proposed in Remark \ref{rem:ED}, and the identity for
$E(2p)$ recorded there. The scripts reproducing every
[C] statement, together with the dimension table of Theorem 9.3 and the data
behind Remark \ref{rem:ED}, are deposited with this paper. Any errors are
the author's.

\appendix

\section{Verification guide for the skeptical reader}

Every [C] statement can be reproduced from a standard Python with SageMath
functionality and the weilrep package \cite{weilrep}. This appendix gives the
environment, the conventions that must be fixed precisely, and, theorem by
theorem, the computation with its expected output.

\subsection{Environment}

\begin{verbatim}
pip install passagemath-standard
git clone https://github.com/btw-47/weilrep
# in scripts: import sys; sys.path.insert(0, "<path>/weilrep")
\end{verbatim}
Our protocol: every headline computation was recomputed independently, in
several cases from a different Gram matrix in the same genus and with a
different implementation; see \S13 for what this comparison can and cannot
test.

\subsection{Conventions and known pitfalls}

\begin{itemize}
\item \textbf{Which representation.} Denominator layer: cusp forms of
\texttt{WeilRep(-(G+U))}, weight $5/2$. Section layer: \texttt{WeilRep(+G)},
weight $3/2$. Slots and $q$-values: \texttt{WeilRep(+M).ds()}, $q(x) =
\{x^\top M x/2\}$.
\item \textbf{The offset.} \texttt{fourier\_expansion()} returns triples
($\gamma$, offset, series); the true exponent is series index plus the
(negative) offset. Forgetting this produces wrong-looking principal parts.
\item \textbf{Echelon basis.} \texttt{cusp\_forms\_basis} echelonizes; the
basis is precision-independent, so vectors like $\lambda = (2, -6, -4, -4, 0,
0)$ and $v_+ = (1,1,1,1,-2,-2)$ are well defined across runs.
\item All invariant subspaces in this paper are computed by the direct
method of \S A.4. (In releases prior to this work the routine
\texttt{invariant\_cusp\_forms\_}\allowbreak\texttt{dimension} returned 1
at weight $3/2$ on $+G_1$, where $\dim S_{3/2} = 0$; this has since been fixed upstream and the
current release agrees with the direct method.)
\end{itemize}

\subsection{The obstruction functional and the Hecke table (Prop.\ 4.1, Thm.\
6.1)}

With $C$ the coefficient dictionary of a basis form (keys $(\gamma, e)$,
exact rationals), the slot functional is the sum of $C(\gamma, m)$ over
$\{\gamma : q(\gamma) \equiv m\}$ for the three menu values; expected
$\lambda = (2, -6, -4, -4, 0, 0)$ on $L_4$ at any precision $\ge 5$. The
Hecke action at $p \nmid 6$ is
\begin{verbatim}
T(C)(g,e) = C(p*g, p^2 e)
  + p * kronecker(sigma * num(e) * den(e), p) * C(g, e)
  + p^3 * C(pinv*g, e/p^2)     # pinv = p^{-1} mod level
\end{verbatim}
solved into a matrix through a rank-6 system of slot values ($e \le
(\mathrm{prec}-1)/p^2$), with $\sigma_4 = 1$. Always verify the Eisenstein
anchor $TE_{5/2} = (p^3+1)E_{5/2}$; mis-chosen $\sigma$ fails it. Expected:
the table of Theorem 6.1, e.g.\ $(x-38)^2(x+10)^4$ at $p = 13$ on $L_4$ (prec
31).

\subsection{Isometry groups and invariants (Thm.\ 7.3)}

Enumerate $O(\D,q)$ directly: choose independent generators of $\D$ (orders
$n_i$ with $\prod n_i = |\D|$); candidate images of a generator are the
elements of the same order and same $q$; prune candidate tuples by the
bilinear pairings $b(x,y) = q(x+y) - q(x) - q(y)$; keep maps that are
bijective and preserve $q$ on every element. Expected: $|O(\D(L_4), q)| =
48$, orbits on the 27 slots of sizes $(3, 12, 12)$. The action on forms is
$(\sigma F)_\gamma = F_{\sigma^{-1}\gamma}$; its matrix is obtained by exact
linear solves against a rank-6 key set, and the invariant subspace is the
stacked kernel of $(R_\sigma - I)$. Expected: dimension 1, spanned by $v_+$.
(Sandwich check: $\lambda$ is invariant and nonzero, so the dimension is at
least 1 independently of sampling.)

\subsection{The order-three symmetry (Thms.\ 7.4--7.5)}

\begin{verbatim}
M  = [[4,-6,-3],[1,-2,0],[2,-3,-2]]   # conj by zeta_3 on (i,j,k)
O0 = diag(-2,6,6)
C  = [[-2,-3,0],[-1,-1,-1],[-2,-3,-1]]  # det 1, C^T G4 C = O0
Mp = C*M*C^-1   # [[46,69,-75],[30,43,-48],[55,81,-89]]
checks: M^3 = I, M^T O0 M = O0, Mp^3 = I, Mp^T G4 Mp = G4
\end{verbatim}
Extend by the identity on $U$ and act on \texttt{ds()}: expected --- 54 of 72
elements moved, $q$ preserved, order 3; induced matrix on
$S_{5/2}(\rho_4)$ with charpoly $(x-1)^2(x^2+x+1)^2$ and invariant plane
equal to $\ker(T_{49}+16)$. The same conjugation computed on the dual
trace-zero lattice of $\mathcal{O}'(6,1)$ (reduced-norm Gram, as in Theorem
3.2) acts trivially on that discriminant form --- the dichotomy of Theorem
7.5.

\subsection{The Petersson norm to 31 digits (Thm.\ 5.1)}

Split the fundamental domain at $y = 1$. Upper region, exactly:
\[
\int_1^\infty e^{-4\pi e y} y^{1/2}\, dy =
\frac{\Gamma(\tfrac32, 4\pi e)}{(4\pi e)^{3/2}},
\]
summed over shared keys. Lower region $\{|x| \le \tfrac12,\ \sqrt{1-x^2} \le
y \le 1\}$: two-dimensional Gauss--Legendre on $\sum_\gamma f_\gamma
\bar g_\gamma\, y^{1/2}$ at 40+ digit working precision. Certification: node
counts $36\times 18$ and $44\times 22$ agree to 31 digits; coefficient
truncation at prec 16 bounds the tail by $e^{-2\pi\cdot 15\cdot\sqrt3/2} <
10^{-35}$. Consistency identity: $\lambda(v_+) = -12$ exactly, so
$\|\Xi\|^2\langle v_+, v_+\rangle = 144$.

\subsection{The second family (Thm.\ 8.1)}

Search primitive $(a, b, c, u, v, w)$ with Gross--Lucianovic discriminant
$4abc + uvw - au^2 - bv^2 - cw^2$ of absolute value in $\{10, 20, 50, 100\}$,
Gram signature $(2,1)$, anisotropic exactly at $\{2, 5\}$
(\texttt{QuadraticForm(ZZ,G).is\_anisotropic(p)}); menus from $m_j =
G_{jj}/2\gcd(Ge_j)^2$. Run $T_9$ with $\sigma$-scan over $\{1, 2, 5, 10\}$
anchored by $T_9 E = 28E$. Two conventions matter here. The menu must be read
off the reflective roots, not the diagonal of a chosen basis: for the $D =
22$ Gram above the diagonal gives $\{-5, 1, \tfrac12\}$ while the roots give
the correct $\{\tfrac1{11}, \tfrac12, 1\}$. And slot cosets live in the
rank-3 discriminant group while denominator-layer coefficients are indexed by
the rank-5 one: they must be matched on the $G$-component. With both fixed,
the expected $\lambda$ is $(-6)$ at $D = 10$ and $(-4,0)$ at $D = 22$. Expected: all four obstructed; $\lambda$ nonzero
only on the eigenvalue $-8$ line in each case; on the
reduced-discriminant-10 lattice, $T_{49}$ gives $-4$ (anchor 344, $\sigma =
5$).

\subsection{The canonical form and the selection sweep (\S9, Thms.\
9.2--9.6, 8.2, 8.3)}

\textbf{Canonical ternary.} From \texttt{B = QuaternionAlgebra(D)}, \texttt{O
= B.maximal\_order()}: build the trace form $T_{ij} =
\operatorname{trd}(x_ix_j)$ on a $\Z$-basis, the dual basis via $T^{-1}$, the
trace-zero kernel over $\Z$, and the Gram $s\cdot\operatorname{trd}(e_r
\bar e_t)$ with $s = -D$; normalize to signature $(2,1)$. Expected: $\det =
-2D$; at $D = 6$ the genus of $G_1$.

\textbf{Classification match.} Allcock's table file is simultaneously a Perl
script: \texttt{perl rk3table.tex all} emits all 8595 lattices as
PARI-readable \texttt{[IP,EDs,roots,W,L]}. Primitivize \texttt{IP}, normalize
the sign, and compare \texttt{Genus(S0)} against \texttt{Genus(L\_D)} over
$2D = |\det|$; expected: the 35 discriminants of Theorem 9.5, one entry each,
with simple-root counts $3, 4, 4$ at $D = 6, 10, 22$.

\textbf{Selection.} Build $L_D$ for every squarefree $D < 230$ with an even
number of prime factors (71 values) and call
\begin{center}
\texttt{WeilRep(-L\_D).cusp\_forms\_dimension(3/2)}.
\end{center}
The exact dimension formula needs no $q$-expansions, so the whole run takes
minutes. Expected: the table of Theorem 9.3, with zeros exactly at $D \in
\{6, 10, 22\}$.

\textbf{Canonical $F$.} Solve for the odd prescription inside
\begin{center}
\texttt{WeilRep(+S).nearly\_holomorphic\_modular\_forms\_basis(1/2, pole+1, prec)}
\end{center}
by exact linear algebra on principal parts; expected:
solvable with ambiguity $\dim = 0$ at $D = 6, 10, 22$, coefficients as in
Theorem 9.2.

\textbf{Slots.} Take the orbit closure of the simple-root slots under the
reflections' action on $\D(L)$ --- not a box search (\S13).

\textbf{$D = 22$ Hecke and invariants.} Same $T_{p^2}$ recipe with $\sigma =
22$, prec 66; expected charpolys as in Theorem 8.2, newform table from
\texttt{Newforms(Gamma0(22),4)}. Invariance: enumerate $O(\D,q)$ as in \S
A.4; expected $|O| = 4$ and invariant dimensions 1 of 1 ($D = 10$), 1 of 2
($D = 22$), with $\lambda = (-6)$, $(-4, 0)$.

\textbf{Landscape.} Even-primitive entries with $|\det| \le 300$: 180 sweeps,
15 zeros. The even-Clifford algebra is determined by
\begin{center}
\texttt{QuadraticForm(QQ,S).is\_anisotropic(p)}
\end{center}
for $p \mid 2\det$, with $D_B$ the product of the anisotropic primes;
expected classification as in \S9.

\subsection{External anchors (PARI/GP)}

\begin{verbatim}
mf = mfinit([6,4],0); F = mfeigenbasis(mf)[1]
FS = mfsymbol(mf, F)      \\ required before mfpetersson
pet6 = mfpetersson(FS)    \\ 9.4876639992411146...e-5
\end{verbatim}
Twist norms via \texttt{mftwist} give $\langle f_{18}, f_{18}\rangle/\langle
f_6, f_6\rangle = 8/9$ exactly; an end-to-end control for the whole chain is
the classical identity $L(f\otimes\chi_{-3},2)\,L(f\otimes\chi_{12},2) =
2^8\pi^4\langle f_6, f_6\rangle$, which the reader's setup should recover to
working precision, along with the central vanishing $L(f\otimes\chi_d, 2) =
0$ for $d \in \{8, -8, -24\}$. Newform data: LMFDB 6.4.a.a, 12.4.a.a,
10.4.a.a, 36.2.a.a, 22.4.a.b, 22.4.a.c \cite{LMFDB}.

\subsection{The section-layer Petersson scalar (\S12)}

Weight-$3/2$ Petersson products use the engine of \S A.6 with measure
$y^{k-2}$ and upper region $\Gamma(\tfrac12, 4\pi n)/(4\pi n)^{1/2}$.
Calibrate first at weight $5/2$, where the published value
\[ 11.340265856116836367082506509341 \]
and the identity $\|\Xi\|^2\langle v_+, v_+\rangle = 144$ must be reproduced.
$B_S$ is the exact average $\tfrac1{48}\sum_\sigma
R_\sigma^{\!\top}R_\sigma$ over the $S_{3/2}$-action matrices of \S A.4;
expected $t$-ratios constant to $\sim\! 10^{-50}$ across all nonzero entries.
\emph{Gauge:} all $r$-values are quoted for $\lambda = \Phi\varepsilon$ (the
Theorem 10.3 gauge, components $\{0, \pm 1\}$); the odd unit prescription
pairs to $2\Phi\varepsilon$, so unit-normalized $r$ are $4\times$ those
quoted (single flips 16, 32), and the quoted $\|\Xi_{\mathrm{flip}}\|^2 =
8/t$ is the $\Phi$-gauge Riesz norm of $\lambda = -2\,\Phi e_0$ (a
channel-$\tfrac16$ flip), with no hidden constant in the engine. Twisted
central values via \texttt{ellinit([0,d\char`^3])} and \texttt{lfun(E,1)} at
40 digits (\texttt{ellrootno} for the vanishing pattern); integer relations
via \texttt{lindep} on $(\log|\cdot|, \log\Gamma(1/3), \log\pi, \log 2, \log
3)$. The exact computations of \S12 ($B_S$, invariance, the $r$-classes, the
40-locus) have been replicated independently, and the closed forms to 81 and
102 digits.

\section{The symmetry type of the section-layer slot configuration}

All statements in this appendix are exact computations replicated on two
machines [C]. Let $X \subset S_{3/2}(\rho_4)^*$ be the configuration of
the 24 single-flip functionals $\pm\Phi$-columns of Theorem 10.3 --- one
vector per non-torsion reflective slot --- on which $G = O(\D,q)$ (order 48)
acts through its faithful, absolutely irreducible four-dimensional
representation ($\tfrac1{48}\sum_\sigma \operatorname{tr}R_\sigma
\operatorname{tr}R_\sigma^{-1} = 1$; the invariant symmetric form is unique
up to scale).

\textbf{Not $D_4$, not $F_4$.} The configuration is not a root system: norms
are unequal (against $D_4$, which the metric uniqueness makes a linear
exclusion), Cartan integrality fails ($2(x,y)/(x,x) = \pm\tfrac12$ occurs),
and the combinatorial signature differs --- the $\pm 1$-zero-sum count over
the 12 antiparallel classes of a standard $D_4$ is 64, not 40. The long roots
of $F_4$ fail on the same three counts. (At the metric-forgetting matroid
level a zero-sum count of 40 does occur among $F_4$-type subconfigurations; a
matroid-level equivalence is not excluded and is posed as an open item.)

\textbf{Eisenstein $B_2$.} The correct identification is
\[
X \;\cong\; \mu_6 \cdot \{e_1,\, e_2,\, e_1+e_2,\, e_1-e_2\} \;\subset\;
\Z[\omega]^2:
\]
four hexagons --- two orthogonal unit hexagons (the $m = \tfrac1{12}$
channel) and two diagonal norm-2 hexagons (the $m = \tfrac16$ channel) ---
the $B_2$ root system with its sign group $\pm$ extended to $\mu_6$. The
model reproduces the zero-sum count 40 exactly, and the norm ratio 2 is the
arithmetic of 2 being inert in $\Q(\sqrt{-3})$: norm 2 is not represented by
$\Z[\omega]$, so norm-2 vectors are forced onto the diagonal --- the
ramification $\{2, 3\}$ of $B_6$ re-enters as shell structure.

\textbf{Wigner structure.} Transporting the complex structure of
$\Z[\omega]^2$: $G$ splits into 24 $\mathbb{C}$-linear and 24 antilinear
elements ($M'$ and $-1$ linear; 18 antilinear involutions). The linear part
$G_0$ contains exactly 4 complex reflections, all of order 2, whose mirror
lines are exactly the $B_2$ mirrors $\{e_1, e_2, e_1 \pm e_2\}$; they
generate $W(B_2)$ of order 8, and
\[
G_0 = \mu_6 \circ W(B_2)
\]
(central product; element orders $1^1\, 2^5\, 3^2\, 4^2\, 6^{10}\, 12^4$).
Since the reflections generate a proper subgroup, $G_0$ is not a
Shephard--Todd complex reflection group: the reflection core of the section
layer is exactly the real Weyl group $W(B_2)$, dressed by $\mu_6$-scalars,
with the antiunitary half acting as Galois conjugation --- a
unitary/antiunitary pair in Wigner's sense.

\end{document}